 \newtheorem{theorem}{\sc Theorem}[section]
 \newtheorem{lemma}[theorem]{\sc Lemma}
 \newtheorem{corollary}[theorem]{\sc Corollary}
 \newtheorem{proposition}[theorem]{\sc Proposition}
 \newtheorem{prop}[theorem]{\sc Proposition}
\newtheorem{defi}[theorem]{\sc Definition}
\newtheorem{result}[theorem]{\sc Result}
\newtheorem{definition}[theorem]{\sc Definition}
\newcommand{\cP}{\mathcal{P}}
\newcommand{\cL}{\mathcal{L}}
\newcommand{\cB}{\mathcal{B}}
\newcommand{\cC}{\mathcal{C}}
\newcommand{\cK}{\mathcal{K}}
\newcommand{\cX}{\mathcal{X}}
\newcommand{\PS}{\cP_S}
\newcommand{\LS}{\cL_S}
\newcommand{\PG}{\mathrm{PG}}
\newcommand{\AG}{\mathrm{AG}}
\newcommand{\BG}{\mathrm{BG}}
\newcommand{\ind}{\mathrm{ind}}
\newcommand{\cut}[1]{}
\newcommand{\linf}{\ell_\infty}
\newcommand{\oS}{\overline{S}}
\newcommand{\Ps}{\mathcal{P}_S}
\newcommand{\Ls}{\mathcal{L}_S}
\newcommand\oPs{\overline{\Ps}}
\newcommand\oPi{\overline{\Pi}}
\newcommand{\NP}{\mathcal{P}_N}
\newcommand{\GQ}{\mathrm{GQ}}
\begin{document}

\title{On the metric dimension of affine planes, biaffine planes and generalized quadrangles}

\date{}

\author{Daniele Bartoli, Tam\'as H\'eger, Gy\"orgy Kiss, Marcella Tak\'ats}

%\author[Bartoli]{Daniele Bartoli}

%\author[H\'eger]{Tam\'as H\'eger}

%\author[Kiss]{Gy\"orgy Kiss}

%\author[Tak\'ats]{Marcella Tak\'ats}

\maketitle

%\thanks{Authors were supported by OTKA, Bolyai and other grants.}

\begin{abstract}

In this paper the metric dimension of (the incidence graphs of) particular partial linear spaces is considered. We prove that the metric dimension of an affine plane of order $q\geq13$ is $3q-4$ and describe all resolving sets of that size if $q\geq 23$. The metric dimension of a biaffine plane (also called a flag-type elliptic semiplane) of order $q\geq 4$ is shown to fall between $2q-2$ and $3q-6$, while for Desarguesian biaffine planes the lower bound is improved to $8q/3-7$ under $q\geq 7$, and to $3q-9\sqrt{q}$ under certain stronger restrictions on $q$. We determine the metric dimension of generalized quadrangles of order $(s,1)$, $s$ arbitrary. We derive that the metric dimension of generalized quadrangles of order $(q,q)$, $q\geq2$, is at least $\max\{6q-27,4q-7\}$, while for the classical generalized quadrangles $W(q)$ and $Q(4,q)$ it is at most $8q$.

% keywords are optional

\bigskip

\noindent\textbf{Keywords:} resolving set, metric dimension, affine plane, biaffine plane, elliptic semiplane, generalized quadrangle\\

\noindent\textbf{Mathematics Subject Classifications:} 05C12, 05B25

\end{abstract}

%\centerline{\today}

\section{Introduction}

In this work we study the metric dimension of the incidence graph of specific finite 

°%partial linear spaces. 

point-line incidence geometries.

For a connected graph $G=(V,E)$ and $x,y\in V$, $d(x,y)$ denotes the distance of $x$ and $y$ (that is, the length of the shortest path joining $x$ and $y$).

\begin{definition}%\cite[Definition 1]{HT2012}\label{Def:ResolvingSet}

Let $G=(V,E)$ be a connected graph. A vertex $v \in V$ is resolved by $S=\{v_1,\ldots,v_n\}\subset V$ if the ordered list $(d(v,v_1),d(v,v_2),\ldots,d(v,v_n))$ is unique. $S$ is a \emph{resolving set} for $G$ if it resolves all the elements of $V$. The \emph{metric dimension} $\mu(G)$ of $G$ is the size of a smallest resolving set for it. A \emph{metric basis} of $G$ is a resolving set for $G$ of size $\mu(G)$.

\end{definition}

Resolving sets of graphs have been studied since the mid '70s, and a lot of study has been carried out in distance-regular graphs. For more information about resolving sets and related topics we refer to \cite{Bailey-small, Bailey-DRG, BC} and the references therein. The current study has been motivated by the work of Bailey and can be regarded as a continuation of \cite{HT2012}, where the metric dimension of projective planes of order $q\geq 23$ is determined. This result was extended for $q\geq13$ in the recent thesis \cite{Szilard}, so we have the following.

\begin{theorem}[\cite{HT2012, Szilard}]\label{4q-4}

The metric dimension of any projective plane of order $q\geq13$ is $4q-4$.

\end{theorem}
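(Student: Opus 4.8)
The plan is to recast the statement as a combinatorial extremal problem about the plane $\Pi$ itself and then prove matching bounds, the lower bound being the serious part. First I would unwind the distances in the incidence graph $\Gamma$ of $\Pi$. Since $\Pi$ is a projective plane, $\Gamma$ is bipartite, $(q{+}1)$-regular, of diameter $3$ and girth $6$: any two points, and any two lines, are at distance $2$, while a point and a line are at distance $1$ or $3$ according as they are incident or not. Writing a candidate resolving set as $S=\Ps\cup\Ls$ with $\Ps$ a set of points and $\Ls$ a set of lines, the distance list of a point $Q\notin\Ps$ then equals $2$ on every $\Ps$-coordinate and, on the $\Ls$-coordinates, records precisely which lines of $\Ls$ pass through $Q$; dually for a line off $\Ls$. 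Since a vertex of $S$ is separated from everything by its zero entry, a point is separated from a line once both parts of $S$ are nonempty, and the degenerate cases $\Ps=\emptyset$ or $\Ls=\emptyset$ make $|S|$ quadratic in $q$, one concludes that $S$ is resolving if and only if (i) no two points outside $\Ps$ lie on exactly the same members of $\Ls$, and (ii) no two lines outside $\Ls$ meet $\Ps$ in exactly the same points. Because two distinct lines share exactly one point, (ii) says that at most one line off $\Ls$ misses $\Ps$ and, for every point $P$, at most one line off $\Ls$ meets $\Ps$ only in $P$; (i) is the dual. The task is thus to minimise $|\Ps|+|\Ls|$ subject to (i)--(ii).

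For the upper bound I would exhibit a set of size $4q-4$: take two lines together with two pencils whose centres are in general position with respect to the two lines, then delete a bounded number of points and lines so that the surviving $\approx 2q-2$ points and $\approx 2q-2$ lines satisfy (i)--(ii); the deletions are arranged exactly so that the few uncovered points and the newly created tangent lines stay inside the exception budget of the reformulation. Checking (i)--(ii) is then a direct, if slightly fussy, incidence computation.

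For the lower bound --- the hard part --- set $a=|\Ps|$ and $b=|\Ls|$. From (ii), $\Ls$ must contain all but at most $a+1$ of the lines tangent or external to $\Ps$, so $b\ge(\text{number of }{\le}1\text{-secants of }\Ps)-a-1$, and dually for $a$ from (i). The crude bound ``at most $\binom{a}{2}$ lines are ${\ge}2$-secant'' only yields $a+b\gtrsim 2\sqrt2\,q$, so the count must be sharpened. I would split on the sizes of $a$ and $b$: for small $a$ the set $\Ps$ has $\Theta(q^2)$ short secants, forcing $b$ to be quadratic; in the range $a,b=\Theta(q)$ one refines the estimate --- using that many-secant lines consume secant pairs, that $\Ps$ cannot be an arc, and that a point set with only $\Theta(q)$ short secants is driven into a blocking-set-like configuration --- and then invokes lower bounds on the size of (double) blocking sets valid in an arbitrary projective plane to push both $a$ and $b$ up to $2q-2$, giving $|S|\ge 4q-4$. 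The hypothesis $q\ge 13$ is exactly what makes these geometric estimates effective.

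Finally, to describe all resolving sets of size $4q-4$ for $q\ge 23$, I would rerun the lower-bound inequalities as a stability argument: equality $|S|=4q-4$ forces equality throughout, which confines $\Ps$ and $\Ls$ to (the dual of) the blocking-set-type configurations above, and a finite case analysis matches them with the construction. The main obstacle throughout is the $a,b=\Theta(q)$ case of the lower bound: the elementary double count falls short by a constant factor, and closing that gap genuinely requires the finite-geometric input on tangent lines and on small (double) blocking sets, together with a careful accounting of the borderline configurations --- which is precisely what pins down the thresholds $q\ge 13$ and $q\ge 23$.
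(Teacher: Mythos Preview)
The paper does not prove this theorem. Theorem~\ref{4q-4} is stated as a known result, cited from \cite{HT2012, Szilard}, and is used as an input (for instance in the proof that $\mu(\Pi)=3q-4$ for affine planes) rather than established here. There is therefore no proof in this paper to compare your proposal against.

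That said, as an outline your plan is essentially the strategy of \cite{HT2012}. Your reformulation (i)--(ii) is exactly Proposition~\ref{resdef-p}, quoted here from \cite{HT2012}. The upper bound there is indeed a two-lines-plus-two-pencils construction of size $4q-4$. For the lower bound, \cite{HT2012} does not stop at the crude secant count you mention but shows that if $|S|\le 4q-4$ then $\Ps$ (and dually $\Ls$) can be extended by a few points to a double blocking set, and then invokes the Ball--Blokhuis lower bound $2q+\sqrt{q}+1$ on double blocking sets to force the structure. Your phrase ``driven into a blocking-set-like configuration'' and the allusion to ``(double) blocking sets'' points in the right direction, but the proposal stays vague exactly at the decisive step: one needs to show concretely that $\Ps$ is a double blocking set minus $O(1)$ points, not merely that it has $\Theta(q)$ short secants, and then feed in the known lower bound for double blocking sets in arbitrary projective planes. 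The thresholds $q\ge 13$ and $q\ge 23$ arise from these numerical inequalities together with the case analysis for the stability part.
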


Moreover, \cite[Figure 3]{HT2012} lists all metric bases for projective planes of order $q\geq 23$.

In Section \ref{sec:affine}, we use Theorem \ref{4q-4} to deduce that the metric dimension of an arbitrary affine plane of order $q\geq 13$ is $3q-4$, and we describe all metric bases if $q\geq 23$. In Section \ref{sec:biaffine}, we study the metric dimension of biaffine planes (elliptic semiplanes of flag type) of order $q$, which turns out to be between $2q-2$ and $3q-6$. We show that the metric dimension of many Desarguesian biaffine planes is $3q-o(q)$, and prove a general lower bound $8q/3-7$ if $q\geq 7$. Moreover, we provide some considerations supporting that, unlike in case of projective and affine planes, the metric dimension of a biaffine plane does rely on its finer structure, thus its exact value cannot be derived from the incidence axioms of biaffine planes only. Projective planes can be considered as generalized $n$-gons with $n=3$. These structures were introduced by Tits in 1959. For a brief description of generalized $n$-gons we refer to \cite[Chapter 13]{PT}. In the graph theoretical point of view, the incidence graph of a generalized $n$-gon is a bipartite graph with diameter $n$ and girth $2n$. For $n=4$, these objects are called \emph{generalized quadrangles} (GQ for short). Section \ref{sec:gq} is devoted to resolving sets of generalized quadrangles. The main result of the section is that a GQ of order $(q,q)$ has metric dimension at least $\max\{6q-27,4q-7\}$, while the metric dimension of the classical GQs $W(q)$ and $Q(4,q)$ is at most $8q$. Note that (the incidence graphs of) projective, affine and biaffine planes of order $q$ have roughly $2q^2$ vertices, while GQs of order $(q,q)$ have roughly $2q^3$.

\subsection{Notation and preliminaries}

In the sequel, $S=\Ps\cup\Ls$ always denotes a set of vertices in the incidence graph of a given partial linear space $\Pi$ (in particular, an affine, biaffine or projective plane, or a generalized quadrangle), where $\Ps$ is a point-set and $\Ls$ is a line-set of $\Pi$. When we use graph theoretical notions in the context of a partial linear space, it should always be interpreted in the corresponding incidence graph. If a point $P$ is incident with a line $\ell$, we say that $P$ \emph{blocks} $\ell$ and that $\ell$ \emph{covers} $P$. A \emph{blocking set} of a partial linear space is a set of points that blocks every line; dually, a \emph{covering set} is a set of lines that covers every point. A point $P$ is \emph{essential} for a blocking set $\cB$ if $\cB\setminus\{P\}$ is not a blocking set.

We apply the same notation as in \cite{HT2012}; in particular, $PQ$ indicates the lines through two points $P$ and $Q$; $[P]$ and $[\ell]$ denote the set of all the lines through $P$ and all the points on the line $\ell$, respectively; once $S=\Ps \cup \Ls$ is given, \emph{inner points} and \emph{inner lines} indicate points or lines in $S$, whereas \emph{outer points} and \emph{outer lines} refer to points and lines not in $S$; a line $\ell$ is \emph{skew}, \emph{tangent} or a $t$-secant to $S$ if $[\ell]\cap \Ps$ is empty, just one point or has exactly $t$ elements, respectively; a point is \emph{covered} if it lies on at least one line of $\Ls$, \emph{uncovered} otherwise, and it is $t$-covered if it lies on exactly $t$ lines of $\Ls$.
If a vertex is in $S$ then it is trivially resolved by $S$. Also, the following lemma \cite[Lemma 6]{HT2012} clearly holds for arbitrary partial linear spaces.

\begin{lemma}[\cite{HT2012}]

A line $\ell$ which intersects $\Ps$ in at least two points is resolved by $S$. If a point lies on at least two inner lines then it is resolved by $S$.

\end{lemma}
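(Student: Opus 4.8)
The two assertions are dual to one another under the point--line duality of the incidence graph (which merely exchanges the two colour classes), so the plan is to prove the statement about lines and then read off the statement about points.

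The approach rests on two elementary facts. First, the incidence graph of $\Pi$ is bipartite with colour classes ``points'' and ``lines'', and, being connected, all its distances are finite; hence $d(x,y)$ is odd whenever $x$ is a point and $y$ a line, and even whenever $x,y$ lie in the same class. Second, by the partial linear space axioms two distinct points are incident with at most one common line (dually, two distinct lines meet in at most one point). So, given a line $\ell$ meeting $\Ps$ in two distinct points $P_1,P_2$, I would argue that already the pair $\{P_1,P_2\}\subseteq S$ resolves $\ell$, and hence so does $S$: one has $d(\ell,P_1)=d(\ell,P_2)=1$; any \emph{point} $R$ satisfies $d(R,P_1)\in\{0,2,4,\dots\}$, so $R$ cannot share $\ell$'s distance to $P_1$; and any \emph{line} $m$ with $d(m,P_1)=d(m,P_2)=1$ is incident with both $P_1$ and $P_2$, hence $m=\ell$. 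Thus no vertex other than $\ell$ has distance $1$ to both $P_1$ and $P_2$, and $\ell$ is resolved. The second statement follows verbatim with points and lines interchanged: a point $P$ lying on two inner lines $\ell_1,\ell_2$ has $d(P,\ell_1)=d(P,\ell_2)=1$, every other line is at even distance from $\ell_1$, and every other point incident with both $\ell_1$ and $\ell_2$ coincides with $P$.

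There is no genuine obstacle here; this is simply the extension of \cite[Lemma~6]{HT2012} to arbitrary partial linear spaces, and the only points needing (minimal) care are that the incidence graph must be connected for the parity dichotomy of distances to be available, and that the ``two points determine at most one line'' (dually, ``two lines meet in at most one point'') incidence axiom is precisely what forbids a second line (resp.\ point) with the prescribed distances.
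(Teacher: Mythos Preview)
Your argument is correct. The paper does not give its own proof of this lemma; it merely records it as \cite[Lemma~6]{HT2012} and remarks that it ``clearly holds for arbitrary partial linear spaces''. Your write-up supplies exactly the routine verification the paper omits: the parity of distances in the bipartite incidence graph rules out vertices of the wrong type, and the partial linear space axiom that two points span at most one line (dually, two lines meet in at most one point) rules out a second line (resp.\ point) at distance~$1$ from both witnesses.
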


We define a biaffine plane of order $q$ as an affine plane of order $q$ with a parallel class of lines removed. Biaffine planes are also called flag-type elliptic semiplanes. For further information, we refer the reader to \cite[Section 7.4, in particular point 13]{Dembowski}. Let $\Pi$ be a biaffine or an affine plane of order $q$. Then $\Pi$ can be uniquely embedded into a projective plane of order $q$ which we will denote by $\oPi$. Let $\linf$ be the unique line in $\oPi$ that has no points in $\Pi$. We call $\linf$ the line at infinity or ideal line. The \emph{direction of a line} of $\Pi$ is its intersection with $\linf$ in $\oPi$; thus the points of $\linf$ correspond to the parallel classes of $\Pi$ and thus will be called \emph{directions}. In the sequel, when working with an affine or biaffine plane $\Pi$, we consider it as embedded in $\oPi$ and use the respective notation without further mention. If $\Pi$ is a biaffine plane then a parallel class of lines, say, the class of vertical lines, is missing from $\Pi$; in $\oPi$, we denote the corresponding direction on $\linf$ by $(\infty)$, but $(\infty)$ is not considered as a direction for the biaffine plane; and we call the sets of $q$ pairwise non-adjacent points in $\Pi$ (corresponding to the point-sets of the $q$ vertical lines of $\oPi$) \emph{non-adjacency classes}. For the sake of completeness, we recall the basic combinatorial properties of biaffine planes.

A biaffine plane of order $q$ has $q^2$ points and $q^2$ lines; each point is incident with $q$ lines, each line is incident with $q$ points; for a non-incident point-line pair $(P,\ell)$, there exists exactly one line through $P$ not intersecting $\ell$, and there is exactly one point $Q$ on $\ell$ not collinear with $P$. There are $q$ parallel classes and $q$ non-adjacency classes, each containing $q$ elements (lines or points, respectively) and partitioning the line set and the point set of the plane, respectively. For a point $P$ or a line $\ell$, $C(P)$ and $C(\ell)$ will denote the non-adjacency or parallel class containing $P$ or $\ell$.

Let $S=\Ps\cup\Ls$ be a vertex set of the affine or biaffine plane $\Pi$. If $\Ls$ contains a line with direction $d$ then we call $d$ a \emph{covered direction} (with respect to $S$). A direction not covered by $S$ is called an \emph{uncovered} direction. If $\Pi$ is a biaffine plane then by a \emph{blocked} or \emph{unblocked class} (with respect to $S$) we mean a non-adjacency class that contains at least one or no point of $\PS$, respectively.

Finite generalized quadrangles can be defined in the following alternative way. 

\begin{definition}

Let $s$ and $t$ be positive integers. 

A point-line incidence geometry $\mathcal{G}=(\mathcal{P}, \mathcal{L}, \mathrm{I})$

is a \emph{generalized quadrangle of order $(s,t)$} if it satisfies the following axioms.

\begin{description}

\item[\bf (GQ1)]

Each point is incident with $t+1$ lines and two distinct points are incident with at most one line.

\item[\bf (GQ2)] 

Each line is incident with $s+1$ points and two distinct lines are incident with at most one point.

\item[\bf (GQ3)]

If $(P,\ell )\subset \mathcal{P}\times \mathcal{L}$ is a non-incident point-line pair then there is 

a unique pair $(P',\ell ')\subset \mathcal{P}\times \mathcal{L}$ for which

$P\, \mathrm{I}\, \ell '\, \mathrm{I}\, P'\, \mathrm{I}\, \ell $.

\end{description}

\end{definition}

From this definition it is easy to derive the basic combinatorial properties of GQs.

Let $\mathcal{G}$ be a generalized quadrangle of order $(s,t)$. Then

\begin{itemize}

\item

each point is collinear with $(t+1)s$ other points and each line is concurrent with $(t+1)s$ other lines;

\item

$\mathcal{G}$ contains $v=(s+1)(st+1)$ points and $b=(t+1)(st+1)$ lines; 

\item

if $P$ and $R$ are two non-collinear points of $\mathcal{G}$ then there are

$t+1$ points in $\mathcal{G}$ which are collinear with both $P$ and $R$;

\item

if $e$ and $f$ are two non-intersecting lines of $\mathcal{G}$ then there are $s+1$ lines in $\mathcal{G}$ which intersect both $e$ and $f$.

\end{itemize}

\section{Resolving sets for affine planes}\label{sec:affine}

It is easy to see that for a projective plane $\Pi$, a set $S$ is a resolving set if and only if for any two distinct outer lines $\ell$ and $\ell'$, $\Ps\cap[\ell]\neq\Ps\cap[\ell']$, and for any two distinct outer points $P$ and $P'$, $\Ls\cap[P]\neq\Ls\cap[P']$. Hence the next proposition is straightforward.

\begin{proposition}[\cite{HT2012}, Proposition 7] \label{resdef-p} %[Proposition 7]

$S=\Ps\cup\Ls$ is a resolving set for a finite projective plane if and only if the following properties hold for $S$:

\begin{description}

\item[\bf (P1)]{There is at most one outer line skew to $\Ps$.}

\item[\bf (P2)]{Through every inner point there is at most one outer line tangent to $\Ps$.}

\item[\bf (P1')]{There is at most one outer point not covered by $\Ls$.}

\item[\bf (P2')]{On every inner line there is at most one outer point that is $1$-covered by $\Ls$.}

\end{description}

\end{proposition}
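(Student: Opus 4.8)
The plan is to argue directly in the incidence graph $\Gamma$ of the projective plane $\Pi$, which is bipartite with vertex classes the point-set and the line-set, has diameter $3$ and girth $6$. First I would record all possible distances in $\Gamma$: two distinct points are always at distance $2$, as are two distinct lines, while a point $P$ and a line $\ell$ are at distance $1$ if $P\in[\ell]$ and at distance $3$ otherwise. Consequently the distance list of a vertex $v$ with respect to $S=\Ps\cup\Ls$ is controlled by $\Ls\cap[P]$ when $v=P$ is a point and by $\Ps\cap[\ell]$ when $v=\ell$ is a line: an outer point has distance $2$ to every inner point and distance $1$ or $3$ to an inner line according to incidence, and dually for outer lines. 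Since an outer point has distance $2$ to each inner point whereas an outer line has distance $1$ or $3$ to it, and symmetrically with the roles of points and lines exchanged, any nonempty $S$ already separates every outer point from every outer line; and a nonresolving $S$ that is empty is degenerate. Hence $S$ is resolving exactly when (a) $\Ps\cap[\ell]\ne\Ps\cap[\ell']$ for all distinct outer lines $\ell,\ell'$ and (b) $\Ls\cap[P]\ne\Ls\cap[P']$ for all distinct outer points $P,P'$ — this is the reduction already noted immediately before the statement, which I would recall here for completeness.

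It then remains to prove (a) $\Longleftrightarrow$ (P1)$\wedge$(P2), and dually (b) $\Longleftrightarrow$ (P1$'$)$\wedge$(P2$'$). For (a), suppose $\ell\ne\ell'$ are outer lines with $\Ps\cap[\ell]=\Ps\cap[\ell']$. Two distinct lines of $\Pi$ meet in exactly one point, so this common trace has at most one point; the case split into ``empty trace'' and ``one-point trace'' is therefore exhaustive. If the trace is empty, both $\ell$ and $\ell'$ are outer lines skew to $\Ps$, contradicting (P1). If the trace is a single point $Q$, then $Q\in[\ell]\cap[\ell']$, so $Q=\ell\cap\ell'$ is an inner point and $\ell,\ell'$ are two distinct outer tangents through $Q$, contradicting (P2). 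Conversely, a violation of (P1) produces two distinct outer lines with empty $\Ps$-trace, and a violation of (P2) produces two distinct outer tangents through an inner point $Q$, each with $\Ps$-trace $\{Q\}$; in either case (a) fails. Thus (a) holds if and only if both (P1) and (P2) hold.

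Finally, applying the point–line duality of projective planes — which interchanges $\Ps$ with $\Ls$, preserves incidence, and sends ``skew'' to ``uncovered'' and ``tangent'' to ``$1$-covered'' — the argument of the previous paragraph applied to the dual plane yields (b) $\Longleftrightarrow$ (P1$'$)$\wedge$(P2$'$), which finishes the proof. There is no genuine obstacle; the only points needing (mild) care are the two structural observations that an outer point and an outer line never share a distance list whenever $S\ne\emptyset$ (so the reduction to (a) and (b) loses nothing), and that the common $\Ps$-trace of two distinct outer lines has size at most $1$ (so that the two-case analysis translating (a) into (P1) and (P2) is complete).
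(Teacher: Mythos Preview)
Your proof is correct and follows exactly the route the paper indicates: the paper states the reduction to conditions (a) and (b) in the sentence immediately preceding the proposition and then declares the result ``straightforward'' without further argument, so you have simply written out the details of the case analysis that the paper (and the cited source \cite{HT2012}) omits.
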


The main difference between projective and affine planes is the existence of parallel lines. The distance between two lines in an affine plane can be either $2$ or $4$, depending on whether they intersect or not. This leads to the following modification of Proposition \ref{resdef-p}; the proof is left for the reader.

\begin{proposition}\label{resdef-a}

A set $S=\Ps \cup \Ls$ is a resolving set for an affine plane $\Pi$ if and only if the following hold.

\begin{description}

\item[\bf (A1)] There is at most one uncovered outer point.

\item[\bf (A2)] On every inner line, there is at most one $1$-covered outer point.

\item[\bf (A1')] For each covered direction $d$, there is at most one outer skew line with direction $d$. There is at most one outer skew line having an uncovered direction.

\item[\bf (A2')] For each inner point, there is at most one tangent line having uncovered direction.

\end{description}

\end{proposition}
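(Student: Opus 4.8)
The plan is to translate the resolving condition into the distance profiles within the incidence graph of an affine plane, exactly as was done in the projective case (Proposition~\ref{resdef-p}), and then carefully account for the one new phenomenon: non-incident lines may now be at distance $4$ rather than $2$. First I would recall that since every inner vertex is trivially resolved, and by the Lemma an outer line meeting $\Ps$ in $\geq 2$ points and an outer point on $\geq 2$ inner lines are automatically resolved, the only pairs we must worry about are: (i) pairs of outer points, (ii) pairs of outer lines, and the ``mixed'' comparisons forced by the fact that a resolving set must also separate outer points from outer lines --- though in a bipartite incidence graph, distances from a point to a point are even and to a line are odd, so points and lines are automatically distinguished by parity, and this mixed case never actually arises. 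So the argument reduces to analysing (i) and (ii) separately.

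For pairs of outer points $P, P'$: the distance from $P$ to an inner point $Q$ is $2$ iff $PQ$ is an inner line, and $4$ otherwise (using that the affine plane has diameter... well, two points are always at distance $2$ or $4$ in the incidence graph since any two points are either collinear or not, and non-collinear points are joined by a path of length $4$). The distance from $P$ to an inner line $\ell$ is $1$ if $P\in[\ell]$, $3$ if $P\notin[\ell]$ but $P$ is collinear with a point of $\ell$, and $5$ if $P$ lies on the unique line parallel to $\ell$ through... no: in an affine plane, given $P\notin[\ell]$, $P$ is joined to $\ell$ by a path of length $3$ through the point $[\ell]\cap PX$ for any direction $X\neq C(\ell)$, so the distance is always $\leq 3$, hence exactly $3$. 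Thus the distance vector of an outer point $P$ is determined by which inner points are collinear with $P$ via an inner line, together with which inner lines pass through $P$. Two outer points $P,P'$ fail to be resolved iff they lie on exactly the same inner lines and are joined to exactly the same inner points by inner lines; the latter is implied by the former unless one of $P,P'$ lies on no inner line. Working this out gives: a bad pair of outer points exists iff either two outer points are both uncovered (violating (A1)), or two outer points are both exactly $1$-covered by the \emph{same} inner line (violating (A2)). The line-pair analysis is dual but genuinely different because of parallelism: for outer lines $\ell,\ell'$, the distance to an inner point $Q$ is $1$ or $3$ as before (never $5$, since $Q$ is joined to $\ell$ through $[\ell]\cap QX$), but the distance to an inner line $m$ is $2$ if $\ell\cap m$ is a point, and $4$ if $\ell\parallel m$ --- and crucially $\ell\parallel m$ forces $C(\ell)=C(m)$, so the ``distance-$4$'' behaviour to inner lines only distinguishes $\ell$ from $\ell'$ when they have different directions, both of which are covered. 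This is exactly what produces the per-direction clause in (A1') and the separate clause for skew lines with uncovered direction, and (A2') handles the point-side resolution of two skew outer lines sharing a point of $\Ps$... tracing it through yields precisely (A1'), (A2').

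The key steps, in order: (1) record the distance formulas $d(P,Q)\in\{2,4\}$, $d(P,\ell)\in\{1,3\}$, $d(\ell,m)\in\{2,4\}$ for outer $P,\ell$ and inner $Q,m$, and observe the parity separation of points from lines; (2) show a pair of outer points $P,P'$ is unresolved iff $\Ls\cap[P]=\Ls\cap[P']$ and this common set is empty or a single line lying on both --- equivalently the negations of (A1) and (A2); (3) show a pair of outer lines $\ell,\ell'$ is unresolved iff $\Ps\cap[\ell]=\Ps\cap[\ell']$ and, additionally, $\ell$ and $\ell'$ meet the same inner lines in the distance-$4$ sense, which since $\ell\parallel m\iff C(\ell)=C(m)$ forces the condition to split according to whether $C(\ell)=C(\ell')$ and whether that direction is covered --- yielding the negations of (A1') and (A2'); (4) assemble. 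I expect the main obstacle to be bookkeeping in step (3): one must be careful that when $\ell$ and $\ell'$ are skew to $\Ps$ and have the \emph{same} covered direction, they are still resolved by an inner line $m$ with $C(m)=C(\ell)$ at distance $4$ versus a concurrent inner line at distance $2$ --- so within a single covered direction at most one skew outer line is allowed, while the ``uncovered direction'' skew lines are all mutually at distance $4$ from every inner line and hence indistinguishable from each other, allowing at most one in total; getting these two clauses of (A1') stated sharply, and verifying no further constraints are hidden in the point-to-line comparisons, is where the care is needed. The reverse direction (that (A1)--(A2') suffice) is then a routine contrapositive unwinding of the same distance computations, which is why the paper leaves it to the reader.
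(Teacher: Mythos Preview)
The paper omits this proof entirely, leaving it to the reader; your plan is the natural one and reaches the correct conditions, but one step contains a real error.

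You assert that for an outer point $P$ and an inner point $Q$ one has $d(P,Q)=2$ if and only if the line $PQ$ is inner, and $4$ otherwise. This is false: in an affine plane any two distinct points lie on a common line, so the path $P$--$PQ$--$Q$ has length $2$ in the incidence graph whether or not $PQ\in\Ls$. Hence $d(P,Q)=2$ always, and inner points contribute nothing to distinguishing two outer points. This actually simplifies your case~(i): two outer points $P,P'$ have equal distance vectors precisely when $[P]\cap\Ls=[P']\cap\Ls$, and since distinct points share a unique line this common set has size $0$ or $1$, yielding exactly the negations of (A1) and (A2). Your intermediate clause about being ``joined to the same inner points by inner lines'' is an artifact of the wrong distance formula and should be dropped.

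Your line-pair analysis is correct in substance, though the sentence about two skew outer lines with the \emph{same} covered direction being ``still resolved'' is backwards as written: every inner line is either parallel to both (distance $4$ from each) or meets both (distance $2$ from each), and every inner point is at distance $3$ from each, so such a pair is \emph{not} resolved---which is exactly why (A1') allows at most one outer skew line per covered direction. With the point-distance fix and this wording corrected, your argument goes through and matches what the paper expects the reader to supply.
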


We remark that the tangents in Proposition \ref{resdef-a} (A2') are necessarily outer and, in particular, all tangent lines with a covered direction are resolved. Furthermore, if there is at most one uncovered direction then (A2') is automatically satisfied, and (A1') simplifies to `there is at most one outer skew line in each direction'.

\begin{proposition}\label{embed}

Let $S=\PS\cup\LS$ be a resolving set for the affine plane $\Pi$, and suppose that there is a direction $P\in\linf$ that contains at least two lines of $\Ls$. Let $\oPs=\Ps\cup([\linf]\setminus\{P\})$. Then $\oS=(\oPs,\Ls)$ is a resolving set for $\oPi$.

\end{proposition}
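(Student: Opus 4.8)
The plan is to verify the four conditions (P1), (P2), (P1$'$), (P2$'$) of Proposition \ref{resdef-p} for $\oS$ in $\oPi$, using the four conditions (A1), (A2), (A1$'$), (A2$'$) that $S$ satisfies in $\Pi$. The key observation is that $\oS$ differs from $S$ only by adding all points of $\linf$ except $P$; in particular $\LS$ is unchanged, every point of $\linf\setminus\{P\}$ becomes an inner point, and the only outer points of $\oS$ are the outer points of $S$ together possibly with $P$ itself. Note also that $P$ is an inner point of $\oS$ precisely when $P\in\Ps$, but in any case, since $P$ lies on at least two lines of $\LS$ by hypothesis, $P$ is resolved by $\LS$ (by the Lemma following Proposition \ref{resdef-p}); so we may ignore $P$ when checking (P1$'$) and treat it as resolved.

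I would proceed condition by condition. For (P1$'$): an outer point of $\oS$ not covered by $\LS$ must be an outer point of $S$ (it cannot be a point of $\linf\setminus\{P\}$, as those are inner, and $P$ is resolved), and an uncovered outer point of $S$ in $\Pi$ is still uncovered in $\oPi$ since $\linf$ carries no line of $\LS$; hence (A1) gives at most one such point, yielding (P1$'$). For (P1): a line skew to $\oPs$ in $\oPi$ cannot be $\linf$ (which now meets $\oPs$), so it is an affine line, and being skew to $\oPs$ means it is skew to $\Ps$ \emph{and} its direction lies outside $\oPs\cap\linf=[\linf]\setminus\{P\}$, i.e.\ its direction is $P$. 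Since $P$ is a covered direction, (A1$'$) allows at most one outer skew line with direction $P$, giving (P1). For (P2$'$): on an inner line of $\oS$ we must bound the number of $1$-covered outer points. If the inner line is $\linf$, its outer points (all but those of $\linf\setminus\{P\}$, i.e.\ only $P$, which is resolved) cause no problem. If it is an affine inner line $\ell\in\LS$, a $1$-covered outer point of $\oS$ on $\ell$ is either an affine outer point (handled by (A2)) or the ideal point $C(\ell)$ of $\ell$ --- but $C(\ell)\in[\linf]\setminus\{P\}\subseteq\oPs$ unless $C(\ell)=P$, in which case $\ell$ passes through $P$ and, as $P$ lies on $\geq 2$ lines of $\LS$, the point $P$ is $\geq 2$-covered, not $1$-covered. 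So (A2) alone yields (P2$'$). For (P2): through an inner point of $\oS$ we bound outer tangent lines to $\oPs$. If the inner point is some $X\in\linf\setminus\{P\}$, a tangent line through $X$ to $\oPs$ is an affine line $\ell$ with $X$ its only $\oPs$-point; but then $\ell$ has direction $X$ and $X\in\oPs$, contradiction --- so there are none. If the inner point is an affine point $A\in\Ps$, a tangent line $\ell$ through $A$ to $\oPs$ meets $\oPs$ only in $A$, so $\ell$ is tangent to $\Ps$ and its direction is not in $[\linf]\setminus\{P\}$, hence its direction is $P$, i.e.\ $\ell$ has the (single) uncovered-in-the-simplified-sense direction; by (A2$'$) there is at most one tangent line through $A$ with uncovered direction, giving (P2).

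The main obstacle, and the point deserving the most care, is the bookkeeping around the special direction $P$: one must repeatedly use that $P$ is a \emph{covered} direction (to invoke the ``at most one outer skew line with direction $P$'' half of (A1$'$) rather than the weaker clause about uncovered directions) and that any line of $\LS$ through $P$ makes $P$ at least $2$-covered (so $P$ never shows up as a problematic $1$-covered outer point). A subtlety worth stating explicitly is that in $\Pi$ the plane may have several uncovered directions, so (A2$'$) in its full strength is needed for (P2); but after passing to $\oS$, every direction except $P$ becomes an inner point, so $\oS$ effectively has ``at most one uncovered direction'', which is exactly the simplified regime mentioned in the remark after Proposition \ref{resdef-a} and which makes the translation to (P1),(P2) clean. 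Once these points are handled, each of the four implications is a one-line check, and Proposition \ref{resdef-p} then certifies that $\oS$ is a resolving set for $\oPi$.
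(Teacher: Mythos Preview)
Your verification of (P1), (P1$'$) and (P2$'$) follows the paper's argument essentially line for line (with two cosmetic slips: for (P1$'$) the relevant fact about $P$ is that it is \emph{covered} by $\LS$, not merely that it is resolved; and the case ``the inner line is $\linf$'' in (P2$'$) is vacuous, since $\linf\notin\LS$). The treatment of (P2), however, contains a genuine error.

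For an inner point $X\in\linf\setminus\{P\}$ you claim that there are no tangent lines to $\oPs$ through $X$, writing ``$\ell$ has direction $X$ and $X\in\oPs$, contradiction''. There is no contradiction here: a tangent line $\ell$ through $X$ meets $\oPs$ in exactly one point, and that one point is $X$ itself; this simply says that $\ell$ is an affine line with direction $X$ that is skew to $\Ps$. Such lines certainly may exist, and (P2) asks you to show that at most one of them is outer. The paper handles this via (A1$'$): if $X$ is a covered direction, the first clause of (A1$'$) gives at most one outer skew line with direction $X$; if $X$ is an uncovered direction, the second clause of (A1$'$) gives at most one outer skew line among all uncovered directions, hence in particular at most one through $X$. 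Your appeal to (A2$'$) in the affine case $A\in\Ps$ is also misplaced --- $P$ is a \emph{covered} direction by hypothesis, so (A2$'$) says nothing about tangents with direction $P$ --- but there the conclusion happens to survive for the trivial reason that there is only one line through $A$ with direction $P$ (namely $AP$), which is exactly how the paper argues.
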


\begin{proof}

Relying on Proposition \ref{resdef-p}, we check the four properties of resolving sets for projective planes for $(\oPs,\Ls)$.\\

(P1): {A skew line to $\oPs$ intersects $\linf$ in $P$. As $P$ is a covered direction, there is at most one outer skew line to $\Ps$ through $P$ by (A1').}\\

(P1'): {On $\linf$, the only outer point is $P$, which is covered by $\Ls$. In $\Pi$, there is at most one outer point not covered by $\Ls$ by (A1).}\\

(P2): {Let $Z\in\oPs$. We have to show that there is at most one outer tangent line to $\oPs$ through $Z$. If $Z\in\Ps$ then every line through $Z$ intersects $\linf$ in an inner point of $\oS$ with the only exception $ZP$. If $Z\in\linf$ is a covered direction then a tangent to $\oPs$ through $Z$ is skew to $\Ps$. By (A1'), there is at most one outer skew line to $\Ps$ through $Z$. Suppose now that $Z\in\linf$ is an uncovered direction. By (A1'), there is at most one outer skew line to $\Ps$ intersecting $\linf$ in an uncovered direction, so we are done.}\\

(P2'): {Let $\ell\in\Ls$. We have to show that there is at most one outer point on $\ell$ that is $1$-covered by $\Ls$. By (A2), there is at most one outer $1$-covered point on $\ell$ in $\Pi$. The points of $\linf$ are all inner points except $P$, which is covered by at least two lines of $\Ls$.}

\end{proof}

\begin{proposition}\label{Lbig}

Let $S=\PS\cup\LS$ be a resolving set for an arbitrary affine plane $\Pi$ of order $q$. If $|S|\leq 3q-4$ then $|\Ls|\geq 2q-3$.

\end{proposition}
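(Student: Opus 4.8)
The plan is to argue by contradiction: assume $|\Ls|\leq 2q-4$ and derive that $|\Ps|$ must then be large enough to violate $|S|\leq 3q-4$. The key quantitative input is that $\Ps$ must be a blocking-type set in a strong sense, because the resolving conditions (A1) and (A1') from Proposition \ref{resdef-a} severely restrict the skew lines to $\Ps$. First I would count uncovered directions: since $|\Ls|\leq 2q-4<q^2$, many directions are uncovered, and in fact the number of covered directions is at most $|\Ls|$; so there are at least $q-|\Ls|$ uncovered directions when $|\Ls|<q$, but more usefully, even when $|\Ls|$ is close to $2q$ there is at least one uncovered direction (since $q$ lines are needed just to cover one direction's parallel class, so covering all $q$ directions needs $q^2$ lines). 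Having at least one uncovered direction, condition (A1') forces: for each of the (at least, say) $c$ uncovered directions, there is at most one skew line through it that is outer — wait, more carefully, (A1') says there is at most one outer skew line having an uncovered direction \emph{in total}. Combined with (A1'), covered directions $d$ each admit at most one outer skew line. So the total number of skew lines to $\Ps$ is at most (number of covered directions) $+\,1+|\Ls|$, the last term accounting for skew lines that happen to be inner — but inner lines are in $\Ls$, so really: the number of \emph{outer} skew lines is at most (number of covered directions) $+1\leq |\Ls|+1$.

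Next I would use this to bound $|\Ps|$ from below. Each line of $\oPi$ not through the relevant infinite points corresponds to an affine line; $\Pi$ has $q^2$ affine lines, and each affine point blocks $q$ of them, so a set $\Ps$ of size $m$ blocks at most $mq$ lines (counting with multiplicity this over-counts, but as an upper bound on the number of blocked lines it is fine after noting each blocked line is counted at least once). Hence the number of skew lines is at least $q^2-mq$ where $m=|\Ps|$. Actually a cleaner count: fix any uncovered direction $d$ (which exists); the $q$ parallel lines in direction $d$ partition the $q^2$ affine points, and all $q$ of these lines are outer (since $d$ is uncovered). A line in direction $d$ is skew to $\Ps$ iff it contains no point of $\Ps$; the number of direction-$d$ lines meeting $\Ps$ is at most $|\Ps|$, so the number of skew lines in direction $d$ alone is at least $q-|\Ps|$. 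By the previous paragraph this quantity is at most $1$, giving $|\Ps|\geq q-1$. That is not yet strong enough, so I would instead exploit \emph{two} uncovered directions if they exist, or push the argument through Proposition \ref{embed}.

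The cleaner route, which I expect to be the intended one, is to invoke Proposition \ref{embed}. If some covered direction contains at least two inner lines, then $\oS=(\oPs,\Ls)$ with $\oPs=\Ps\cup([\linf]\setminus\{P\})$ is a resolving set for $\oPi$, so by Theorem \ref{4q-4}, $|\oS|\geq 4q-4$, i.e. $|\Ps|+(q-1)+|\Ls|\geq 4q-4$, giving $|\Ps|+|\Ls|\geq 3q-3$, already contradicting $|S|\leq 3q-4$ — wait, this shows $|S|\geq 3q-3$, contradiction, \emph{unless no covered direction contains two inner lines}. So the crux is the case where every covered direction contains exactly one inner line, i.e. the $|\Ls|$ inner lines lie in $|\Ls|$ distinct directions; then there are $|\Ls|$ covered directions and $q-|\Ls|$ uncovered ones (forcing $|\Ls|\leq q$). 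In this case every inner line is a tangent to $\Ps$ in its own direction's parallel class only trivially; more to the point, condition (A1') now gives at most $|\Ls|+1$ outer skew lines total, and since $q-|\Ls|\geq 1$ there are $q(q-|\Ls|)$ outer lines in uncovered directions of which at most $1$ is skew, so at least $q(q-|\Ls|)-1$ of them meet $\Ps$; a direction-$d$ line meeting $\Ps$ uses up at least one point, and across the $q-|\Ls|$ uncovered directions these point-usages are in different parallel classes but share points, so a double count gives $|\Ps|\cdot(\text{something})\geq q(q-|\Ls|)-1$. Optimizing, the worst case is $|\Ls|=q$ (no uncovered direction) — but then $|\Ls|$ inner lines in $q$ distinct directions is possible, and I must handle it separately, likely again via counting skew lines: if $|\Ls|=q$ then $|\Ps|\leq 2q-4$, and (A1') permits at most one outer skew line per direction, so at most $q$ outer skew lines, but $|\Ps|\leq 2q-4$ points block at most $(2q-4)q$ of the $q^2$ lines leaving $\geq q^2-(2q-4)q=q(4-q)<0$ — vacuous, so this naive bound fails and I need the sharper blocking-set theory (a blocking set in a plane of order $q$ needs $\geq q+\sqrt q+1$ points, and a set blocking all but a bounded number of lines has a similar lower bound).

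The main obstacle, therefore, is the endgame in the case $|\Ls|\leq q$ with all inner lines in distinct directions: here one must show $|\Ps|\geq 2q-3$ essentially from scratch, using that $\Ps$ together with very few exceptions blocks every line in the $q-|\Ls|$ (at least one) uncovered directions and blocks all but $|\Ls|$-ish lines overall. I expect the argument to combine (i) a counting bound showing $\Ps$ meets almost all lines, hence $|\Ps|\gtrsim q$, with (ii) a refinement using that the $\leq |\Ls|$ "missing" blocked lines cannot all be concentrated, pushing the bound up to $2q-3$; alternatively one projects to $\oPi$ by adding only the uncovered-direction points needed and applies a near-blocking-set estimate there. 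Getting the constant exactly $2q-3$ (rather than, say, $2q-o(q)$) will require care with the small exceptional counts coming from (A1) and (A1').
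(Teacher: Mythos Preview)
Your approach has a genuine gap and differs substantially from the paper's.

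First, the arithmetic in your Case~1 is off: $\linf$ has $q+1$ points in $\oPi$, so $|[\linf]\setminus\{P\}|=q$, not $q-1$. Hence $|\oS|=|S|+q$, and Theorem~\ref{4q-4} yields only $|S|\geq 3q-4$, not $3q-3$. There is no contradiction; you land exactly on the hypothesis $|S|=3q-4$ with no information whatsoever about $|\Ls|$. So Case~1 is not disposed of by Proposition~\ref{embed}. (Also, Theorem~\ref{4q-4} needs $q\geq 13$, while Proposition~\ref{Lbig} carries no such assumption.) In Case~2 there are $q+1$ directions, not $q$, and in any event you acknowledge that the counting there is incomplete; the sketches you give (``near-blocking-set estimate'', ``projecting to $\oPi$'') are not arguments. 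More structurally, you are trying to bound $|\Ps|$ from below via (A1') and skew lines, but the proposition asks for a lower bound on $|\Ls|$; the natural conditions to exploit are (A1) and (A2), which control covered points.

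The paper's proof is a short, self-contained double count with no case split and no appeal to projective-plane results. Let $t$ be the number of $1$-covered outer points; by (A2), $t\leq|\Ls|$. By (A1) there is at most one uncovered outer point, so at least $q^2-1-t-|\Ps|$ points lie on at least two lines of $\Ls$. Counting incidences $\Gamma=\{(\ell,P):\ell\in\Ls,\ P\in\ell,\ |[P]\cap\Ls|\geq 2\}$ from both sides gives
\[
|\Ls|q-t\;\geq\;|\Gamma|\;\geq\;2\bigl(q^2-1-t-|\Ps|\bigr),
\]
and combining with $t\leq|\Ls|$ and $|\Ps|+|\Ls|\leq 3q-4$ yields $|\Ls|(q-1)\geq 2q^2-6q+6$, hence $|\Ls|>2q-4$. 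This is the argument you should aim for; your route via Proposition~\ref{embed} cannot close because it reproduces exactly the boundary $|S|=3q-4$ rather than breaching it.
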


\begin{proof}

Let $t$ be the number of $1$-covered outer points. Then $t\leq |\Ls|$ by (A2). Using (A1) and double counting on the size of $\Gamma=\{(\ell,P) \mid \ell \in \Ls, \ell \in [P], |[P] \cap \Ls|\geq 2\}$, we see that 

\[ |\Ls|q-t \geq |\Gamma| \geq 2(q^2-1-t-|\Ps|).\]

Then

\[|\Ls|(q-1)\geq 2(q^2-1-t-|\Ps|)+t-|\Ls|=2(q^2-1)-2(|\Ls|+|\Ps|)-t+|\Ls|\geq 2q^2-6q+6,\]

hence \[|\Ls|\geq \frac{2q^2-6q+4}{q-1} = 2q-4+\frac{2}{q-1}>2q-4.\]

\end{proof}

\begin{theorem}

Let $\Pi$ be an arbitrary affine plane of order $q\geq13$. Then the metric dimension of $\Pi$ is $3q-4$.

\end{theorem}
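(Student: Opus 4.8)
The plan is to establish the theorem in two halves: an upper bound $\mu(\Pi)\le 3q-4$ by exhibiting a resolving set of that size, and a matching lower bound $\mu(\Pi)\ge 3q-4$. For the upper bound, I would give an explicit construction and verify it against Proposition~\ref{resdef-a}. The natural candidate is to take, in the embedded projective plane $\oPi$, the known metric basis of $\oPi$ of size $4q-4$ from Theorem~\ref{4q-4} (or rather the explicit ones from \cite{HT2012}), choose the line at infinity $\linf$ to be disjoint from the inner points of that basis, and discard the $q$ inner points lying on $\linf$; this removes $q$ points and leaves something of size roughly $3q-4$. One then checks (A1), (A2), (A1$'$), (A2$'$) hold for the restricted set. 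Alternatively, and perhaps more cleanly, one builds the affine resolving set from scratch: e.g. take two full pencils of lines through two inner points together with a few extra points/lines to kill skew lines and uncovered points, counting carefully so that $|\Ls|=2q-3$ or so and $|\Ps|$ makes up the rest; the conditions in Proposition~\ref{resdef-a} are local and easy to audit on such a structured set. I expect the upper bound to be the routine half.

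For the lower bound, suppose $S=\PS\cup\LS$ is a resolving set with $|S|\le 3q-4$; I want a contradiction. By Proposition~\ref{Lbig} we already know $|\Ls|\ge 2q-3$, hence $|\Ps|\le q-1$. The idea is to pass to $\oPi$ and invoke Theorem~\ref{4q-4}: if some covered direction carries at least two lines of $\Ls$, then Proposition~\ref{embed} turns $S$ into a resolving set $\oS$ of $\oPi$ with $|\oS|=|S|+(q-1)\le (3q-4)+(q-1)=4q-5<4q-4$, contradicting Theorem~\ref{4q-4}. So it remains to handle the case where \emph{every} covered direction carries exactly one line of $\Ls$. Then $\Ls$ has at least $2q-3$ distinct directions, so at least $2q-3$ of the $q$ directions are covered — impossible for $q\ge 4$. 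Wait: that would already finish it, so the genuine content must be that one cannot always force a doubly-covered direction; the dichotomy is in fact between ``$\Ls$ has $\le q$ directions, so by pigeonhole with $|\Ls|\ge 2q-3>q$ some direction is at least doubly covered'' — which is automatic. So the lower bound reduces almost entirely to combining Propositions~\ref{Lbig} and~\ref{embed} with Theorem~\ref{4q-4}, once one observes $|\Ls|\ge 2q-3>q\ge$ (number of directions) forces a direction with $\ge 2$ inner lines.

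Thus the skeleton is: (i) $|S|\le 3q-4\Rightarrow|\Ls|\ge 2q-3$ (Proposition~\ref{Lbig}); (ii) since there are only $q$ directions and $2q-3>q$ for $q\ge 4$, some direction $P$ has at least two lines of $\Ls$; (iii) apply Proposition~\ref{embed} to get a resolving set of $\oPi$ of size $\le 4q-5$, contradicting Theorem~\ref{4q-4} for $q\ge 13$; hence $\mu(\Pi)\ge 3q-4$; (iv) a construction (via restricting a projective metric basis to the affine part, or an ad hoc pencil-based set) shows $\mu(\Pi)\le 3q-4$. The only real work is step (iv): one must produce a concrete $S$ with $|S|=3q-4$ and carefully verify all four clauses of Proposition~\ref{resdef-a}, paying attention to the ``at most one uncovered direction'' reduction noted after that proposition so that (A2$'$) is free. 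The main obstacle, if any, is making the construction genuinely work at the tight value $3q-4$ (not $3q-3$ or $3q-2$) while keeping exactly one uncovered outer point and at most one skew line per direction; this is a delicate bookkeeping exercise but involves no deep idea beyond what is already in \cite{HT2012}.
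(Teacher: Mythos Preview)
Your skeleton is essentially the paper's own argument, but there is a genuine slip in step~(iii) that breaks the contradiction. An affine plane of order $q$ has $q+1$ parallel classes, not $q$, and correspondingly $[\linf]\setminus\{P\}$ has $q$ points, not $q-1$. Hence Proposition~\ref{embed} gives $|\oS|=|S|+q\le(3q-4)+q=4q-4$, \emph{not} $4q-5$; so you do not get a resolving set of $\oPi$ strictly smaller than $4q-4$, and there is no contradiction with Theorem~\ref{4q-4}. (The pigeonhole in step~(ii) survives, since $2q-3>q+1$ still holds for $q>4$.)

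The fix is simple and is exactly what the paper does: do not aim for a strict inequality. Since $\oS$ is a resolving set of $\oPi$ and $\mu(\oPi)=4q-4$, you have $|\oS|\ge 4q-4$; combined with $|\oS|\le 4q-4$ this forces $|\oS|=4q-4$, whence $|S|=3q-4$. Thus any resolving set of size at most $3q-4$ has size exactly $3q-4$, i.e.\ $\mu(\Pi)\ge 3q-4$. For the upper bound, the paper does not give a separate construction inside this proof; the explicit metric bases are listed in Figure~\ref{affineressets} (stated for $q\ge23$ but easily checked to satisfy (A1)--(A2$'$) for all $q$ in range), so your plan to verify a concrete $S$ of size $3q-4$ against Proposition~\ref{resdef-a} is appropriate and aligns with the paper.
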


\begin{proof}

Let $S=\Ps\cup\Ls$ be a resolving set for $\Pi$ of size at most $3q-4$. 
Then, by Proposition \ref{Lbig}, $|\Ls|\geq 2q-3$. As there are $q+1$ directions and $q>4$ implies $2q-3>q+1$, we see that there is a parallel class that contains at least two lines from $\Ls$. 
Let $P$ be a point of $\linf$ that is covered by at least two lines of $\Ls$. Thus Proposition \ref{embed} can be applied to see that $\oS$ is a resolving set for $\oPi$ of size at most $4q-4$; moreover, $\oS$

contains $q$ collinear points (on $\linf$). Thus, as the metric dimension of $\oPi$ is $4q-4$ (Theorem \ref{4q-4}), we see that $|\oS|=4q-4$, whence $|S|=3q-4$ follows. 

\end{proof}

\begin{theorem}
Let $\Pi$ be an arbitrary affine plane of order $q\geq23$. Then there are four types of metric bases for $\Pi$, listed in Figure \ref{affineressets}.
\end{theorem}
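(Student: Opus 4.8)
The plan is to mirror the proof that $\mu(\Pi)=3q-4$: pass to the projective closure, invoke the classification of metric bases of projective planes from \cite[Figure~3]{HT2012}, and then determine which of those ``come from'' the affine plane. \textbf{Step 1: every affine metric basis embeds.} Let $S=\Ps\cup\Ls$ be a metric basis of $\Pi$, so $|S|=3q-4$. By Proposition~\ref{Lbig} we have $|\Ls|\ge 2q-3>q+1$, so some direction $P\in\linf$ lies on at least two lines of $\Ls$; fix one. By Proposition~\ref{embed}, $\oS=(\oPs,\Ls)$ with $\oPs=\Ps\cup([\linf]\setminus\{P\})$ is a resolving set of $\oPi$ of size $4q-4$, hence a metric basis of $\oPi$ by Theorem~\ref{4q-4}. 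The key point is that $\oS$ then has a special shape relative to $\linf$: $\linf$ is an outer line of $\oS$ carrying exactly $q$ inner points, and its unique remaining point $P$ lies on at least two inner lines. Moreover $S$ is recovered from $\oS$ by deleting $\linf$ together with the $q$ inner points on it.

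\textbf{Step 2: which projective metric bases have this shape.} Using \cite[Figure~3]{HT2012}, every metric basis of $\oPi$ (for $q\ge 23$) follows one of a short list of explicit recipes, in each of which the point set is supported on at most two lines; hence in each recipe only boundedly many lines carry at least $q$ inner points. I would go through the list and, for each recipe, describe all lines $m$ that are (i) outer, (ii) incident with exactly $q$ inner points, and (iii) whose remaining point lies on at least two inner lines --- conditions that can be read off directly from the configuration. The only real work is tracking the position of $m$ relative to the distinguished points and lines the recipe is built around: whether $m$ passes through one of them, whether the point of $m$ not in the point set coincides with one of them, and so on. By Step~1, every affine metric basis of $\Pi$ arises by deleting such an $m$, together with its $q$ inner points, from such a projective metric basis.

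\textbf{Step 3: de-projectivise and classify.} For each admissible pair (projective metric basis $\oS$, line $m$) from Step~2, I would form the affine configuration $S'$ in $\oPi\setminus m$ obtained by taking $m$ as line at infinity, keeping the line-set of $\oS$ (legitimate since $m$ is outer), and discarding the $q$ inner points on $m$; automatically $|S'|=|\oS|-q=3q-4$. One then tests, via Proposition~\ref{resdef-a}, whether $S'$ is a resolving set: (A1) and (A2) follow from (P1') and (P2') of $\oS$, and the ``covered-direction'' half of (A1') from (P1) and (P2), once one accounts for the infinite point on each inner line and for the deleted infinite points; the ``uncovered-direction'' half of (A1') and condition (A2') have to be verified by inspection inside each configuration, and it is exactly here that some of the pairs from Step~2 get discarded. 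The pairs that survive give resolving sets of size $3q-4$, hence metric bases, and by Step~1 these are all metric bases of $\Pi$. Finally, sorting the surviving recipes (and the positions of $m$ within them) yields precisely the four families listed in Figure~\ref{affineressets}.

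\textbf{Main obstacle.} The substance of the proof is the case analysis in Steps~2--3: matching $\linf=m$ against the degenerate incidence positions inside each projective type, pinning down when (ii) and (iii) hold, checking that (A1')--(A2') are not broken by the deletion (this is where borderline tangency and coverage configurations must be excluded), and verifying at the end that the four surviving families are pairwise distinct and that nothing is missing. As in \cite{HT2012}, the hypothesis $q\ge 23$ is used exactly where the projective classification needs it, and possibly in a few borderline positions to rule out accidental small configurations.
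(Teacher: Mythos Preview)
Your approach is correct and essentially the same as the paper's: embed via Proposition~\ref{embed}, locate $\oS$ among the projective metric bases of \cite[Figure~3]{HT2012}, then test which candidates de-projectivise to affine resolving sets. One point to be careful about in Step~2: the list in \cite{HT2012} records only projective metric bases with $|\oPs|\le|\Ls|$, so to catch all configurations with $q$ collinear points you must also dualise the listed configurations having $q$ concurrent lines (the paper does this explicitly and picks up the duals of (C1) and (C2) this way); the full candidate list is (C3), (C5), (C29) with $Z=P$, (C30) with $Z=Q$, and the duals of (C1), (C2), of which only (C5) fails after de-projectivising and two of the survivors coincide, leaving four types.
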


\begin{proof}
In \cite{HT2012}, the complete list\footnote{Figure 3 of \cite{HT2012} contains a few potentially unclear details; an upgraded version is now available attached to \cite{HT2012} or at ArXiv:1207.5469.} of resolving sets of size $4q-4$ in projective planes of order $q\geq 23$ with $|\Ps|\leq |\Ls|$ is given. Thus to find the candidates for $\oS$ in the list, we need to look for resolving sets with either $q$ collinear points, or with $q$ concurrent lines and $|\Ls|<|\Ps|$ (the duals of the latter examples contain $q$ collinear points but, as they have more points than lines, they are not listed).

Looking through the list of \cite{HT2012}, we see that $\oS$ is either (C3), (C5), (C29) with $Z=P$, $(C30)$ with $Z=Q$, or the dual of (C1) or (C2). Removing the line with $q$ collinear points and its inner points, (C5) is the only one which results in a set that is not a resolving set for $\Pi$ (it violates (A2)), while (C29) with $Z=P$ and (C30) with $Z=Q$ give the same construction. Thus we end up with four different resolving sets depicted in Figure \ref{affineressets}.
\end{proof}

\begin{figure}[!ht]

\psfrag{R}{$R$}\psfrag{R'}{$R'$}\psfrag{e}{$e$}\psfrag{l1}{$\ell_1$}\psfrag{l0}{$\ell_0$}

\includegraphics[width=\textwidth]{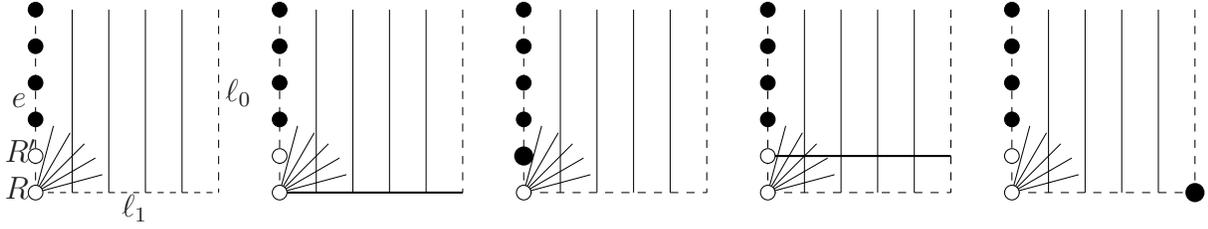}

\caption{\label{affineressets} \emph{The four types of smallest resolving sets for affine planes of order $q\geq 23$.}}
\footnotesize\mbox{}\\
Take an arbitrary parallel class $[P_\infty]$, say, the vertical lines, and choose two lines from it, $e$ and $\ell_0$. Let $R$, $R'$ be two arbitrary points on $e$, and let $e\neq\ell_1\in[R]$. Each smallest resolving set contains the following type of structure: $([e]\setminus\{R,R'\})\cup([P_\infty]\setminus\{e,\ell_0\})\cup([R]\setminus\{e,\ell_1\})$.

To obtain a resolving set seen in the figure, we have to add one of the following four elements, respectively: $\ell_1$; the line $\ell\in[R']$ parallel with $\ell_1$; $R'$; $\ell_0\cap\ell_1$.

\end{figure}

\section{On the metric dimension of biaffine planes}\label{sec:biaffine}

We denote an arbitrary biaffine plane of order $q$ by $B_q$, while $\BG(2,q)$ stands for the Desarguesian biaffine plane; that is, the biaffine plane obtained from the Desarguesian affine plane $\AG(2,q)$. Note that if $q\leq 8$ then the only biaffine plane of order $q$ is $\BG(2,q)$ (this follows from the well-known analogous fact for projective planes). The metric dimension of biaffine planes of small order, together with other small distance regular graphs, was determined by Bailey \cite{Bailey-small}. It turned out that $\mu(\BG(2,3))=4$, $\mu(\BG(2,4))=6$ and $\mu(\BG(2,5))=9$. In Bailey's work, $\BG(2,7)$ is exceptional as it is the only distance transitive graph on at most 100 vertices and valency between 5 and 13 whose metric dimension could not be calculated.

\subsection{General case}

First, we present some bounds using purely combinatorial tools, thus these results are valid for all biaffine planes.

\subsubsection{Bounds on $\mu(B_q)$}

In case of biaffine planes, Proposition \ref{resdef-p} needs further modifications because there are non-collinear points, too. The straightforward proof is omitted again.

\begin{prop}\label{baxioms}

$S=\PS\cup\LS$ is a resolving set for $B_q$ if and only if the following hold. 

\begin{description}

\item[\bf (B1)]{For each blocked class $C$, there is at most one uncovered outer point in $C$; furthermore, there is at most one outer uncovered point in the union of unblocked classes.}

\item[\bf (B2)]{On each inner line, there is at most one 1-covered point lying in an unblocked class.}

\item[\bf (B1')]{For each covered direction $d$, there is at most one skew outer line with direction $d$; furthermore, there is at most one outer skew line having an uncovered direction.}

\item[\bf (B2')]{On each inner point, there is at most one tangent line with uncovered direction.}

\end{description}

\end{prop}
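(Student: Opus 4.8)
The plan is to translate ``$S$ resolves every vertex'' into the four conditions by a distance analysis in the incidence graph of $B_q$. First I would record all pairwise distances: a point and a line are at distance $1$ if incident and $3$ otherwise; two distinct points are at distance $2$ if they lie in different non-adjacency classes (equivalently, if they are collinear) and at distance $4$ otherwise; two distinct lines are at distance $2$ if they have different directions (equivalently, if they meet) and at distance $4$ otherwise. All of this is immediate from the combinatorial properties of $B_q$ recalled above, the distance-$4$ claims being a short check with the axioms. Since the incidence graph is bipartite, with points on one side and lines on the other, for every $s\in S$ the parity of $d(v,s)$ is determined by whether $v$ is a point or a line; hence an outer point and an outer line are automatically resolved from each other (here $S\neq\emptyset$, which holds because $S$ is resolving in the forward direction and because (B1) forces $\mathcal{P}_S\neq\emptyset$ for $q\ge2$ in the backward one), and every inner vertex is resolved trivially. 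Therefore $S$ is resolving if and only if it separates every two distinct outer points and every two distinct outer lines, and the rest of the proof treats these two tasks.

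For outer points $P\neq P'$, the distance list says that an inner line separates them iff it passes through exactly one of them, and an inner point separates them iff it is collinear with exactly one of them. Comparing $[P]\cap\mathcal{L}_S$ with $[P']\cap\mathcal{L}_S$: if they differ, an inner line separates $P,P'$; if they agree and have size $\geq2$ then $P=P'$; if they agree with size $1$ the common inner line is $PP'$ and both points are $1$-covered by it; if they agree with size $0$ both points are uncovered. On the other side, some inner point separates $P$ and $P'$ precisely when $C(P)\neq C(P')$ and at least one of these two classes is blocked. Putting the two together, the only pairs of outer points that $S$ fails to separate are (i) two $1$-covered outer points on a common inner line whose classes are both unblocked, and (ii) two uncovered outer points lying either in one common non-adjacency class or in two distinct unblocked classes. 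Forbidding (i) is exactly (B2); forbidding (ii) is exactly (B1), once one notes that a $1$-covered point in an unblocked class is automatically outer and that the two sub-cases of (ii) are what force the asymmetric shape of (B1) --- at most one offending point per blocked class, but at most one offending point in total over all unblocked classes.

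The treatment of two outer lines $\ell\neq m$ runs in exactly the same way, with directions replacing non-adjacency classes and covered directions replacing blocked classes. Now an inner point separates $\ell$ and $m$ iff it lies on exactly one of them, and an inner line separates them iff it is parallel to exactly one of them; such a separating inner line exists precisely when $C(\ell)\neq C(m)$ and at least one of $C(\ell),C(m)$ is a covered direction. Comparing $[\ell]\cap\mathcal{P}_S$ with $[m]\cap\mathcal{P}_S$ shows that if they coincide then either both lines are skew or both are tangent through a common inner point $Q=\ell\cap m$. Hence the unseparated pairs of outer lines are (i$'$) two tangent outer lines through a common inner point, both with uncovered direction, and (ii$'$) two skew outer lines lying either in one common parallel class or in two distinct uncovered directions; forbidding (i$'$) is (B2$'$) and forbidding (ii$'$) is (B1$'$), using that a tangent line with uncovered direction is necessarily outer. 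Assembling the point and line cases gives the proposition.

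The difficulty here is not conceptual but lies entirely in the case bookkeeping: keeping the notions ``covered / uncovered / $1$-covered'' separate from ``blocked / unblocked'', verifying in each sub-case that the relevant points and lines are forced to be outer (so that the conditions read exactly as in the statement), and, most delicately, getting the asymmetric form of (B1) and (B1$'$) right --- the crucial observation being that two uncovered outer points in distinct unblocked classes, respectively two skew outer lines in distinct uncovered directions, are still mutually unresolved, which is why those clauses must bound the offending objects ``in total'' and not just ``one per class''.
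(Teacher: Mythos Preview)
Your argument is correct and is precisely the straightforward distance analysis the paper has in mind; the paper itself omits the proof of this proposition entirely, so there is nothing to compare against beyond noting that your case split (separating two outer points via $[P]\cap\LS$ and the blocked/unblocked status of $C(P),C(P')$, then dualising for lines) is the natural one. One tiny imprecision: in the parenthetical about $S\neq\emptyset$, (B1) does not literally force $\PS\neq\emptyset$ (one could have $\PS=\emptyset$ with $\LS$ covering all but one point), but it does force $S\neq\emptyset$, which is all you need there.
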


Note that a 1-covered point in (B2) or a tangent line in (B2') is necessarily outer (as it is in an unblocked class, or has an uncovered direction). In particular, all 1-covered points lying in a blocked class and all tangent lines with a covered direction are resolved. Furthermore, if there is at most one uncovered direction then (B2') is automatically satisfied, and (B1') simplifies to `there is at most one outer skew line in each direction' and, dually, if there is at most one unblocked class then (B2) is automatically satisfied and (B1) simplifies to `there is at most one outer uncovered point in each non-adjacency class'.

\begin{proposition}\label{3q-6}

If $q\geq 4$ then $\mu(B_q)\leq 3q-6$.

\end{proposition}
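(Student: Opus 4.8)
The plan is to exhibit an explicit resolving set $S=\Ps\cup\Ls$ of size $3q-6$ and verify the four conditions (B1), (B2), (B1'), (B2') of Proposition~\ref{baxioms}. Since the dangerous directions in (B1') and the dangerous non-adjacency classes in (B1) are exactly the ones not met by $\Ls$ or $\Ps$, the natural strategy is to make $\Ls$ cover all but one direction and make $\Ps$ block all but one non-adjacency class, so that the simplified forms of the axioms mentioned after Proposition~\ref{baxioms} apply: (B2') becomes vacuous, (B2) becomes vacuous, (B1') reduces to ``at most one outer skew line in each direction'', and (B1) reduces to ``at most one outer uncovered point in each non-adjacency class''.

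Concretely, I would work in $\oPi$, write the missing parallel class as the vertical lines with ideal point $(\infty)$, and take $\Ls$ to consist of most of one pencil together with one line from almost every other parallel class. A clean choice: fix an affine point $R$, put into $\Ls$ all $q$ lines of $\Pi$ through $R$ except two, say $\ell_1,\ell_2$; these cover $q-2$ directions. The two uncovered directions $d_1=C(\ell_1)$, $d_2=C(\ell_2)$ still need handling, so add one further line in direction $d_1$ and one in direction $d_2$, avoiding $R$; now every direction is covered except possibly we keep exactly one uncovered on purpose, or cover all $q$ and accept a slightly larger $\Ls$ — the bookkeeping here is what pins down the constant $-6$. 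Dually, take $\Ps$ to be the $q$ points of one inner line $\ell\in\Ls$ minus two, plus one extra point in each of the two resulting unblocked non-adjacency classes. With $|\Ls|$ and $|\Ps|$ each about $q$ one lands near $2q$; to reach $3q-6$ one is allowed to be wasteful, so the real content is to choose the configuration so the \emph{verification} goes through, not to minimize.

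The key steps, in order, are: (i) fix the explicit configuration and count $|S|$, showing it equals $3q-6$ (this forces the precise ``minus two, plus one'' choices and the single line $R$-pencil skeleton); (ii) check (B1'): a skew outer line in a covered direction $d$ is a line parallel to the unique inner line of direction $d$ and missing all inner points — because the inner points nearly fill one inner line and its pencil, one shows at most one such line per direction survives; (iii) check (B1): an uncovered outer point in a blocked class lies off every inner line and off the inner point set; again the near-covering forces at most one per class; (iv) observe (B2) and (B2') hold automatically (or with a one-line argument) because there is at most one unblocked class and at most one uncovered direction by construction. Finally note $q\geq 4$ is needed so that $3q-6\geq q$ (so that removing two points from a $q$-line still leaves a nonempty set, etc.) and so that the ``extra'' lines through direction $d_i$ can be chosen off $R$.

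The main obstacle I anticipate is step (ii)/(iii): ensuring that in a \emph{covered} direction there is genuinely at most one skew outer line. A pencil through $R$ blocks every direction but leaves huge room for skew lines far from $R$; so the skeleton really needs the inner \emph{point} set $\Ps$ to be spread along a line transversal to the pencil, and one must check that the combination ``pencil of lines at $R$'' $\cup$ ``points on a line $\ell$'' leaves at most one skew line in each of the $q-2$ directions covered by the pencil and at most one in the two patched directions. This is a finite case analysis on where a putative second skew line could avoid both $R$'s pencil and $\ell$'s points; it should close for $q\geq4$ but is exactly the place where an off-by-one in the configuration breaks the bound, so I would carry it out carefully, treating the two patched directions $d_1,d_2$ and the ``distinguished'' uncovered direction separately from the generic covered ones.
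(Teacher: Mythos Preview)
Your overall strategy---leave exactly one direction uncovered and one non-adjacency class unblocked so that (B2), (B2') become vacuous and (B1), (B1') reduce to their simple forms---is exactly the paper's approach. The skeleton ``pencil of lines through a point $R$'' together with ``points on a line $\ell$ through $R$'' is also what the paper uses. But there is a genuine gap in your plan, and it is precisely at the point where you write ``one lands near $2q$; to reach $3q-6$ one is allowed to be wasteful''. That sentence is wrong: a configuration of size near $2q$ of the kind you describe can never satisfy (B1'), and the missing $q$ elements are forced, not optional.

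The failure is at the direction $d_0$ of the line $\ell$ that carries $\Ps$. Every one of the $q-1$ lines parallel to $\ell$ misses $\Ps$ entirely, and the pencil through $R$ contributes only $\ell$ itself in direction $d_0$; so you have $q-1$ outer skew lines in the covered direction $d_0$, violating (B1'). Dually, the non-adjacency class $C(R)$ meets each pencil line only in $R$, so $q-1$ points of $C(R)$ remain uncovered, violating (B1). Fixing this requires, for each of the $q-1$ parallels to $\ell$, either putting that line into $\Ls$ or putting its intersection with $C(R)$ into $\Ps$; one such parallel may stay outer and skew, and one point of $C(R)$ may stay outer and uncovered, so you need $q-3$ extra elements beyond your $2(q-2)+\text{const}$ skeleton. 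This, together with a small compatibility condition linking the choice in direction $d_0$ to the choice in $C(R)$ (so that the single allowed outer skew line and the single allowed outer uncovered point do not conflict with the other special direction and class), is exactly what the paper's sets $T\subset C(P_1)$ and $\{XQ:Q\in C(P_1)\setminus(T\cup\{P_1,R\})\}$ accomplish, and is where the count $3q-6$ comes from. Until you incorporate this, your case analysis in step (ii)/(iii) will not close.
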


\begin{proof}

We give a construction of a resolving set of size $3q-6$.

\begin{figure}[!ht]

\begin{center}

\psfrag{R}{$R$}\psfrag{P1}{$P_1$}\psfrag{l1}{$\ell_1$}\psfrag{T}{$T$}\psfrag{W}{$W$}\psfrag{Z}{$Z$}\psfrag{Pq}{$P_q$}\psfrag{X}{$X$}\psfrag{linf}{$\linf$}\psfrag{lq}{$\ell_q$}

\includegraphics[height=4.5cm]{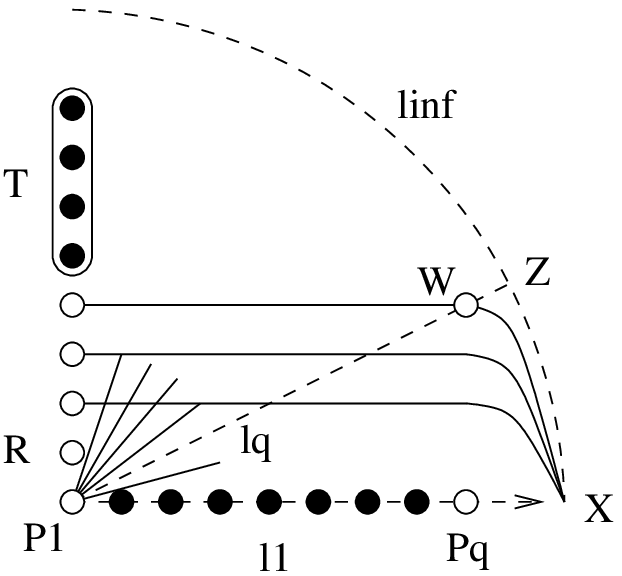}

\caption{A resolving set for $B_q$ of size $3q-6$. We require $ZP_q\cap C(P_1)\in T$.}

\end{center}

\end{figure}

Let $\ell_1=\{P_1,\ldots,P_q\}$, $[P_1]=\{\ell_1,\ldots,\ell_q\}$, $X=\ell_1\cap\linf$, $Z=\ell_q\cap\linf$, $W=C(P_q)\cap\ell_q$. Let $T\subset C(P_1)$ be a non-empty set of at most $q-3$ points such that $ZP_q\cap C(P_1)\in T$, $XW\cap C(P_1)\notin T$ (if $ZP_q\cap XW\in C(P_1)$, we may choose another numbering for the points of $\ell_1$), and let $R$ be an arbitrary point in $C(P_1)\setminus(T\cup \{P_1\})$ not covered by $XW$.

Let $\PS=T\cup \{P_2,\ldots,P_{q-1}\}$ and $\LS=\{\ell_2,\ldots,\ell_{q-1}\}\cup\{XQ\colon Q\in C(P_1)\setminus(T\cup\{P_1,R\})\}$. Then $|\PS|+|\LS|=3q-6$. We check the requirements of Proposition \ref{baxioms} to show that $S=\PS\cup\LS$ is a resolving set for $B_q$.

Clearly, $C(P_q)$ is the only unblocked class and $Z$ is the only uncovered direction; hence, (B2) and (B2') hold trivially. Through the direction $Z$, the only outer skew line is $\ell_q$ (as $ZP_q$ is blocked by a point of $T$); through $X$, the only outer skew line is $XR$; through any other direction $D$ the only possible outer skew line is $DP_q$. Therefore, (B1') is satisfied. Dually, in $C(P_q)$, the only outer uncovered point is $P_q$ (as $\ell_q\cap C(P_q)=W$ is covered); in $C(P_1)$, the only outer uncovered point is $R$; in any other class $C(P_i)$, the only possible outer uncovered point is $C(P_i)\cap\ell_q$. Hence (B1) is also satisfied.

\end{proof}

Note that in the above construction, the number of points may vary from $q-1$ to $2q-5$. Also note that Bailey's computations \cite{Bailey-small} show that $\mu(\BG(2,q))=3q-6$ for $q=4$ and $5$.

We proceed with some investigations on how small a resolving set for a biaffine plane may be. We will not obtain sharp results for the general case; in the Desarguesian case, stronger results will be obtained. As in case of affine resolving sets, one can prove a lower bound on the metric dimension of biaffine planes using that of projective planes; however, it is easy to obtain a lower bound directly.

\begin{proposition}\label{breslbs}%bIAFFINEresOLVINGlOWERbOUNDs

Let $S$ be a resolving set for $B_q$. Then $|\PS|\geq q-|S|/(q-1)$ and $|\LS|\geq q-|S|/(q-1)$.

\end{proposition}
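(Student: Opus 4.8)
The plan is to derive both inequalities symmetrically by a double-counting argument analogous to Proposition \ref{Lbig}, but now exploiting the structure (B1)--(B2') of biaffine resolving sets directly. I will prove $|\PS|\geq q-|S|/(q-1)$; the bound on $|\LS|$ follows by the point-line duality of biaffine planes (a biaffine plane is self-dual up to the roles of parallel classes and non-adjacency classes, and the axioms (B1)--(B2') are preserved under this exchange).

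First I would count uncovered outer points. By (B1), among the $q$ non-adjacency classes, every blocked class contains at most one uncovered outer point, and all the unblocked classes together contain at most one more. Since a class is blocked only if it contains a point of $\PS$, the number of blocked classes is at most $|\PS|$, so the total number of uncovered outer points is at most $|\PS|+1$. On the other hand, the points covered by $\LS$ number at most $q|\LS|$ (each of the $|\LS|$ inner lines covers exactly $q$ points), and the inner points number $|\PS|$. Hence the number of uncovered outer points is at least $q^2-|\PS|-q|\LS|$. Combining,
\[
q^2-|\PS|-q|\LS|\leq |\PS|+1,
\]
which rearranges to $2|\PS|+q|\LS|\geq q^2-1$. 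This is a perfectly clean inequality but it is not yet of the desired shape; the asymmetry between the coefficients $2$ and $q$ is exactly what I need to be careful about.

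The main obstacle, then, is to convert $2|\PS|+q|\LS|\geq q^2-1$ into $|\PS|\geq q-|S|/(q-1)$. I would write $|\LS|=|S|-|\PS|$, substitute, and solve for $|\PS|$: from $2|\PS|+q(|S|-|\PS|)\geq q^2-1$ we get $(q-2)|\PS|\leq q|S|-q^2+1$, which points the wrong way — so this naive route fails, and the honest fix is that the counting must be done slightly differently, isolating $|\PS|$ on the productive side. The cleaner approach is to bound uncovered outer points from below more crudely but usefully: since every one of the $q^2$ points is either inner, or covered by $\LS$, or an uncovered outer point, and since each inner line has $q$ points of which we should only charge the genuinely outer ones, I instead note that the $q^2-|\PS|$ outer points are each either covered or uncovered, the covered ones lying on inner lines. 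Counting incidences between outer points and inner lines: there are at most $q|\LS|$ such incidences, but also one can simply observe that an uncovered-outer-point count of at most $|\PS|+1$ (from (B1)) together with the trivial bound ``number of covered points $\le q|\LS|$'' gives what we want only after recognizing $|\LS|\le |S|/(q-1)$ should be the target form. Concretely: every non-adjacency class is a set of $q$ pairwise non-adjacent points, and a class is blocked iff it meets $\PS$, so unblocked classes are entirely outer; by (B1) each such class, and indeed all of them jointly, contribute few uncovered points, but each unblocked class still has $q$ points all of which must be covered save one, forcing $q-1$ incidences with $\LS$ per unblocked class. With $u$ = number of unblocked classes $\ge q-|\PS|$, this yields $q|\LS|\geq (q-1)u\geq (q-1)(q-|\PS|)$, i.e. $q|\LS|\geq (q-1)q-(q-1)|\PS|$. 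Since $q|\LS|\le q|S|$ is too weak, I instead use $|\LS|\le |S|$ directly in the rearranged form $(q-1)|\PS|\geq (q-1)q-q|\LS|\geq (q-1)q-q|S|$, hence $|\PS|\geq q-\tfrac{q}{q-1}|S|$ — still not quite the claimed $q-|S|/(q-1)$. The discrepancy is a factor of $q$ versus $1$, so the sharp argument must charge only outer covered points: a covered outer point in an unblocked class lies on at least one inner line, and inner lines meeting an unblocked class do so in at most one point each (the class being a non-adjacency class, hence a set of mutually non-collinear points, no line contains two of them), so $|\LS|\geq u\geq q-|\PS|$ per the dual of (B2)-type reasoning, giving $|\LS|\geq q-|\PS|$, i.e. $|\PS|+|\LS|\geq q$, which is weaker still. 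I therefore expect the actual proof to be the short direct one: I will set up $|\PS|\ge$ (number of blocked classes) is false in general, so instead bound via ``each of the $q-|\PS|$ classes disjoint from $\PS$ forces $\ge 1$ line of $\LS$ but each line of $\LS$ is charged by $\le$ one such class is also false'' — the correct charging is that each line of $\LS$ meets each non-adjacency class in at most one point, so a single line of $\LS$ can be the unique cover of the ``$q-1$ covered points'' in at most one class only if...

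Given the above tangle, the right plan is the direct one mirroring Proposition \ref{Lbig}: let $t$ be the number of outer uncovered points; by (B1), $t\leq |\PS|+1$ (at most one per blocked class, of which there are $\le|\PS|$, plus one overall for the unblocked classes). The $q^2$ points split as: $|\PS|$ inner, $t$ outer uncovered, and the rest outer covered, each outer covered point lying on $\ge1$ inner line; double-counting incidences between inner lines and all points gives exactly $q|\LS|$ incidences total, of which at least $q^2-|\PS|-t$ involve outer covered points, so $q|\LS|\geq q^2-|\PS|-t\geq q^2-2|\PS|-1$. Then $q|\LS|+2|\PS|\geq q^2-1$; adding $(q-2)|\PS|\ge 0$ is wrong-signed, so instead observe $q(|\LS|+|\PS|)\geq q|\LS|+q|\PS|\geq q|\LS|+2|\PS|\geq q^2-1$ (using $q\ge2$), i.e. $q|S|\geq q^2-1$, hence $|S|\geq q-1/q$ — far too weak. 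The only way the claimed bound $|\PS|\geq q-|S|/(q-1)$ holds is via the \emph{dual} count: let $s$ be the number of outer skew lines; by (B1'), $s\leq|\LS|+1$. Every line is inner, outer skew, or outer with $\ge1$ inner point; counting incidences between inner points and lines gives $q|\PS|$ incidences, at least $q^2-|\LS|-s\geq q^2-2|\LS|-1$ of which lie on outer non-skew lines, so $q|\PS|\geq q^2-2|\LS|-1$. Writing $|\LS|=|S|-|\PS|$: $q|\PS|\geq q^2-2|S|+2|\PS|-1$, so $(q-2)|\PS|\geq q^2-1-2|S|$, giving $|\PS|\geq \frac{q^2-1-2|S|}{q-2}$, which for the relevant range $|S|=O(q)$ is indeed $\approx q$; and a short estimate shows $\frac{q^2-1-2|S|}{q-2}\geq q-\frac{|S|}{q-1}$ is equivalent to $(q^2-1-2|S|)(q-1)\geq (q(q-1)-|S|)(q-2)$, i.e. after expansion $q^2-1\ge ... $ — I will verify this inequality holds for all $|S|\ge 0$ and $q\ge 2$, and that completes the proof. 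The hard part is purely this final algebraic simplification and making sure the incidence count ``$q|\PS|\ge q^2-|\LS|-s$'' is stated correctly (an outer line with $\ge1$ inner point contributes $\ge1$ to the count, and there are $q^2-|\LS|-s$ of them, so the bound is valid). The dual inequality for $|\LS|$ then follows verbatim with points and lines, parallel classes and non-adjacency classes, interchanged.
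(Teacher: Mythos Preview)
Your final approach --- count outer skew lines $s$, note that the $q^2-|\LS|-s$ outer non-skew lines each contain an inner point, and bound this by the $q|\PS|$ total incidences --- is exactly the paper's idea. The gap is in the bound you apply to $s$. You use $s\leq|\LS|+1$ (coming from ``at most $|\LS|$ covered directions, one outer skew line each, plus one more''). That bound is correct, but it is the wrong one here: substituting it yields $(q-2)|\PS|\geq q^2-1-2|S|$, and the algebraic claim you announce, namely
\[
\frac{q^2-1-2|S|}{q-2}\ \geq\ q-\frac{|S|}{q-1},
\]
is equivalent after expansion to $q|S|\leq(2q-1)(q-1)$, i.e.\ $|S|\leq 2q-3+\tfrac{1}{q}$. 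Since any resolving set satisfies $|S|\geq 2q-2$ (Proposition~\ref{blb}), this inequality fails in exactly the range that matters, so your proof does not close.

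The fix is a one-word change in the skew-line bound. There are only $q$ directions, and (B1') gives at most one outer skew line per covered direction and at most one altogether among the uncovered directions; either way $s\leq q$. (Equivalently, as the paper puts it, the total number of skew lines is at most $|\LS|+q$.) With $s\leq q$ your own incidence count gives
\[
q|\PS|\ \geq\ q^2-|\LS|-q\ =\ q^2-(|S|-|\PS|)-q,
\]
hence $(q-1)|\PS|\geq q^2-q-|S|$ and $|\PS|\geq q-|S|/(q-1)$ immediately. The dual bound on $|\LS|$ follows the same way using (B1). The many preliminary attempts in your write-up (via uncovered points, via unblocked classes) either point the wrong way or reproduce the same weak $|\LS|+1$-type estimate; they can all be discarded once you use $s\leq q$.
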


\begin{proof}

The number of skew lines to $\PS$ is at most $|\LS|+q = |S|-|\Ps|+q$ by (B1'). As $k$ points in $B_q$ block at most $kq$ lines (with equality if and only if no two of them are collinear), we have $|\PS|q\geq q^2-(|S|-|\PS|+q)$, that is, $(q-1)|\PS|\geq q^2-q -|S|$, whence $|\PS|\geq q-|S|/(q-1)$ follows. Duality gives the statement for $\LS$.

\end{proof}

\begin{proposition}\label{blb}

For any biaffine plane $B_q$ of order $q$, we have $\mu(B_q)\geq 2q-2$.

\end{proposition}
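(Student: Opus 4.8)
The plan is to derive this directly from Proposition \ref{breslbs}, which for any resolving set $S=\PS\cup\LS$ of $B_q$ gives both $|\PS|\geq q-|S|/(q-1)$ and, by duality, $|\LS|\geq q-|S|/(q-1)$. The naive move is to add these two inequalities and substitute into $|S|=|\PS|+|\LS|$, but that only yields $|S|\geq 2q(q-1)/(q+1)>2q-4$, i.e.\ $|S|\geq 2q-3$, which is one short of the target. So instead I would argue by a short case distinction that exploits the integrality of $|\PS|$ and $|\LS|$.

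First, suppose $|\PS|\leq q-2$. Then $q-2\geq|\PS|\geq q-|S|/(q-1)$ forces $|S|/(q-1)\geq 2$, hence $|S|\geq 2q-2$. By the same argument applied to the line set (duality), $|\LS|\leq q-2$ likewise implies $|S|\geq 2q-2$. In the only remaining case we have $|\PS|\geq q-1$ and $|\LS|\geq q-1$ simultaneously, so $|S|=|\PS|+|\LS|\geq 2q-2$ at once. Since one of these three cases always occurs, every resolving set of $B_q$ has at least $2q-2$ vertices, which is exactly $\mu(B_q)\geq 2q-2$.

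There is essentially no serious obstacle here: Proposition \ref{breslbs} already packages the blocking-set/covering-set double count that does the real work, and the only point requiring a little care is recognizing that one should branch on whether $|\PS|$ (or $|\LS|$) falls below $q-1$, rather than plug both bounds in simultaneously and lose a constant. It is worth remarking afterwards that $2q-2$ is presumably far from optimal for a general $B_q$ — Proposition \ref{3q-6} already shows $\mu(B_q)$ can be as large as $3q-6$ — so I would flag that the gap between $2q-2$ and $3q-6$ is precisely what the remaining results of the section (and the Desarguesian refinements) aim to narrow.
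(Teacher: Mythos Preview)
Your proof is correct and essentially identical to the paper's: the paper assumes $|S|\leq 2q-3$ and uses Proposition~\ref{breslbs} to get $|\PS|\geq q-(2q-3)/(q-1)>q-2$, hence $|\PS|\geq q-1$ by integrality, and likewise $|\LS|\geq q-1$, yielding the contradiction $|S|\geq 2q-2$. Your case split on $|\PS|\leq q-2$ is simply the contrapositive of this same implication, so the two arguments are logically the same.
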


\begin{proof}

Let $S=\PS\cup\LS$ be a resolving set for $B_q$. Suppose to the contrary that $|S|\leq 2q-3$. Then Proposition \ref{breslbs} gives $|\PS|\geq q-(2q-3)/(q-1)>q-2$, hence $|\PS|\geq q-1$. Similarly, $ |\LS|\geq q-1$, thus $|S|\geq 2q-2$, a contradiction.

\end{proof}

Next we give more detailed results on the sizes of $\PS$ and $\LS$ in terms of two parameters. These will not give a better lower bound immediately but we will make use of them later.

\begin{defi}

Let $S=\PS\cup\LS$ be a resolving set for $B_q$. Then $u=u(S)$ denotes the number of uncovered directions and $c=c(S)$ denotes the number of unblocked classes.

\end{defi}

\begin{prop}\label{deltamax}

Let $S=\PS\cup\LS$ be a resolving set for $B_q$. Then the number $\delta$ of skew lines to $\PS$ is at most $|\LS|+q$; moreover, $\delta\leq |\LS|+q-u+1$.

\end{prop}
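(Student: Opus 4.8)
The plan is to bound $\delta$ by counting skew lines according to their direction on $\linf$. There are $q$ directions available for lines of $B_q$ (recall $(\infty)$ is excluded), partitioned into $q-u$ covered directions and $u$ uncovered ones. By (B1'), each covered direction hosts at most one outer skew line; since every skew line is outer (an inner line meets $\PS$... no — actually an inner line need not meet $\PS$, but a skew \emph{inner} line would still be a skew line, so I must be slightly careful). Let me instead argue: a skew line to $\PS$ contains no inner point; in particular it contributes to $\delta$ regardless of whether it is inner or outer, but (B1') only controls \emph{outer} skew lines. However, an inner skew line is a line of $\LS$ that happens to miss $\PS$, and there are at most $|\LS|$ of those; adding these to the outer skew lines, the covered directions carry at most $(q-u)+|\LS|$ skew lines in total. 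Hmm, this double-counts in the wrong direction. A cleaner bookkeeping: partition \emph{all} skew lines into inner ones (at most $|\LS|$) and outer ones; by (B1'), outer skew lines with a covered direction number at most $q-u$, and outer skew lines with an uncovered direction number at most $1$. Hence $\delta \le |\LS| + (q-u) + 1 = |\LS| + q - u + 1$, which is the sharper bound; the weaker bound $\delta \le |\LS|+q$ follows since $u\ge 1$ forces $q-u+1\le q$, and if $u=0$ one reruns the count with no uncovered directions, getting $\delta\le|\LS|+q$ directly (indeed $\le |\LS|+q-0 = |\LS|+q$, and even $|\LS|+q-1$... but $|\LS|+q$ suffices and matches the claim).

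The key steps, in order, are: (i) observe that every skew line either lies in $\LS$ — at most $|\LS|$ such — or is outer; (ii) apply (B1') to the outer skew lines, splitting by covered versus uncovered direction, to get at most $(q-u)+1$ of them; (iii) add to conclude $\delta\le|\LS|+q-u+1$; (iv) note this yields $\delta\le|\LS|+q$ whenever $u\ge1$, and handle $u=0$ separately (here (B1') gives at most $q$ outer skew lines over the $q$ covered directions, again at most $|\LS|+q$ total).

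The main obstacle is purely a matter of care rather than depth: one must not conflate "outer skew line" with "skew line," since (B1') is a statement about outer lines only, and the residual inner skew lines must be absorbed into the $|\LS|$ term. Once that distinction is respected the argument is a one-line double count, and I expect no further difficulty. (This also makes transparent why the bound is stated with the ``$+1$'': it is exactly the single outer skew line permitted across all uncovered directions collectively by (B1').)
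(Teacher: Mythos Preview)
Your argument is correct and is exactly the approach the paper takes: split skew lines into inner ones (at most $|\LS|$) and outer ones, then bound the outer ones via (B1') by $(q-u)+1$, giving $\delta\le |\LS|+q-u+1$; the cruder bound $|\LS|+q$ follows from the case split on $u$. The paper's proof is simply the two-sentence version of what you wrote; your only stray remark is the aside that one might get $|\LS|+q-1$ when $u=0$, which is not correct (there are $q$ covered directions, each allowing one outer skew line), but you rightly discard it anyway.
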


\begin{proof}

The lines of $\LS$ may be skew. Regarding outer skew lines, Proposition \ref{baxioms} (B1') gives the required bound immediately.

\end{proof}

\begin{prop}\label{A8}\label{uncovered}

Let $S=\PS\cup\LS$ be a resolving set for $B_q$. Then

\begin{eqnarray}
|\PS|&\geq& \frac{u}{u+1}2(q-1), \label{numu}\\
|\LS|&\geq& \frac{c}{c+1}2(q-1). \label{numc}
\end{eqnarray}

\end{prop}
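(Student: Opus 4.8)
The claim is symmetric under duality (points/lines, covered directions/blocked classes), so it suffices to prove \eqref{numu} and then \eqref{numc} follows by duality, exactly as in the proof of Proposition \ref{breslbs}. The plan is therefore to count skew lines to $\PS$ in a way that distinguishes lines through covered directions from lines through uncovered directions, using the sharpened bound of Proposition \ref{A8} (Proposition \ref{deltamax}) together with the fact that $u$ uncovered directions carry $u\cdot q$ lines, none of which is in $\LS$.

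\textbf{Key steps.} First I would set up the double count. A set $\PS$ of $k$ points in $B_q$ blocks at most $kq$ lines, so the number $\delta$ of skew lines satisfies $\delta \geq q^2 - k q$. On the other hand, by Proposition \ref{deltamax}, $\delta \leq |\LS| + q - u + 1$. Combining these gives $q^2 - |\PS|q \leq |\LS| + q - u + 1$, which already yields a lower bound on $|\PS|$ but only in terms of $|\LS|$, not purely in terms of $u$ — so this crude version is not yet enough. The second, sharper step is to localize the counting to the $u$ uncovered directions: the $uq$ lines with an uncovered direction all lie outside $\LS$, and among them at most $u$ are skew to $\PS$ by (B1') (at most one per uncovered direction). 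Hence these $uq$ lines are blocked at least $uq - u$ times by points of $\PS$; but a point $P\in\PS$ lies on at most $u$ lines with uncovered direction (one per uncovered direction). Therefore $|\PS| \cdot u \geq uq - u$? — that only gives $|\PS|\geq q-1$, too weak and in fact wrong as stated. The correct sharper count is to note instead that each of the $u$ uncovered directions contributes $q$ lines, at most one skew, so at least $q-1$ of them are blocked; these blocking incidences total at least $u(q-1)$, and each inner point accounts for at most... — here one must be careful: an inner point blocks at most one line in each uncovered direction, so up to $u$ such lines, but the bound we want, $|\PS|\geq \frac{u}{u+1}2(q-1)$, suggests pairing this with the complementary count over \emph{all} lines. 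So the real plan is: write $q^2 = (\text{lines blocked by }\PS) + \delta$, bound $\delta \leq |\LS| + q - u + 1 \leq |S| - |\PS| + q - u + 1$, bound lines blocked by $\PS$ by $|\PS|q$, and separately use that $|\LS| \leq |S| - |\PS|$ is itself constrained because $\LS$ avoids all $uq$ lines of uncovered direction while covering directions number only $q+1-u$; combine to isolate $|\PS|$ against $u$ alone.

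\textbf{Main obstacle.} The delicate point is getting the exact constant $\frac{u}{u+1}2(q-1)$ rather than something weaker like $q-1$ or $2(q-1) - O(1)$. This forces the argument through a genuinely two-variable estimate: one inequality bounding skew lines from above via $|\LS|$ and $u$ (Proposition \ref{deltamax}), and one bounding them from below via $|\PS|$ and the block-count $|\PS|q$, then eliminating $|\LS|$ using $|\LS| \geq \frac{c}{c+1}2(q-1)$ — but that is circular, since \eqref{numc} is what we are also proving. The way out is that \eqref{numu} and \eqref{numc} should be proved \emph{independently}: for \eqref{numu} one uses only the skew-line count against $\PS$ and the bound $\delta \leq |\LS| + q - u + 1$ combined with $|\LS| \le q(q+1-u)$? — no, $|\LS|$ can exceed that is false, $\LS$ lies in $q+1-u$ parallel classes of $q$ lines each so $|\LS| \le q(q+1-u)$, but that is far too weak. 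I expect the actual mechanism is: the $q^2 - |\PS|q$ skew lines to $\PS$ split as (skew lines in covered directions) $+$ (skew lines in uncovered directions), the latter is $\le u$, and the former is $\le |\LS|$ plus one per covered direction is already counted; the clean statement is $q^2 - |\PS|q \le \delta \le |\LS| + (q+1-u) \cdot 1 + u \cdot 1$ is the same as before. So the genuine content must be an averaging: the $u$ uncovered directions contain $uq$ lines, at least $uq - u = u(q-1)$ blocked by $\PS$, and the incidence count $\sum_{P\in\PS}|\{$uncov. lines thru $P\}| \ge u(q-1)$ with left side $\le u\cdot|\PS|$ only if every inner point meets every uncovered direction — which it does, since in a biaffine plane a point lies on exactly one line of each parallel class that meets it, and an uncovered direction's parallel class is a full parallel class of $q$ lines, one through $P$. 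Thus $u|\PS| \ge u(q-1)$ gives $|\PS|\ge q-1$. To reach $\frac{u}{u+1}2(q-1)$ one must be cleverer — presumably double counting incidences between $\PS$ and \emph{all} $q(q+1)$ lines, or between $\PS$ and the union of uncovered-direction lines together with a second family. I will structure the write-up around isolating precisely which family of lines to count against $\PS$ so that the ratio $\frac{u}{u+1}$ emerges; pinning that family down is the crux, and I anticipate it is the union of the $u$ full parallel classes of uncovered directions, counted with the observation that a skew line in an uncovered direction and the at-most-one outer skew line overall interact to save the extra factor.
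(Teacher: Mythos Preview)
Your proposal has a genuine gap: you never invoke axiom (B2'), and that is precisely where the factor $\tfrac{u}{u+1}$ comes from. All of your attempted counts involve only skew lines and blocked lines. As you yourself observe, counting incidences between $\PS$ and the $uq$ lines of uncovered direction, with at most one skew per direction, gives only $u|\PS|\ge u(q-1)$, i.e.\ $|\PS|\ge q-1$; and bringing $|\LS|$ into the estimate via Proposition~\ref{deltamax} yields a mixed inequality in $|\PS|$ and $|\LS|$ that cannot be decoupled into the claimed bound without circularity. No refinement of a skew-versus-blocked count alone will produce $\tfrac{u}{u+1}2(q-1)$.

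The missing idea is to count \emph{tangent} lines (1-secants) to $\PS$ having uncovered direction. Fix an uncovered direction $P$ and let $t_i$ be the number of $i$-secants to $\PS$ through $P$. Since $t_0+t_1+t_{\ge 2}=q$ and (B1') forces $t_0\le 1$, one gets $t_{\ge 2}\ge q-1-t_1$; then $|\PS|\ge t_1+2t_{\ge 2}\ge 2(q-1)-t_1$, so $t_1\ge 2(q-1)-|\PS|$. Summing over the $u$ uncovered directions gives at least $u\bigl(2(q-1)-|\PS|\bigr)$ tangent lines of uncovered direction. Now (B2') says each inner point lies on at most one tangent of uncovered direction, and every such tangent is determined by its unique point in $\PS$; hence there are at most $|\PS|$ of them. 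Combining,
\[
|\PS|\ \ge\ u\bigl(2(q-1)-|\PS|\bigr),
\]
which rearranges to $|\PS|\ge \tfrac{u}{u+1}\,2(q-1)$. The bound for $|\LS|$ is dual. So the ``family of lines'' you were looking for is the set of tangents in the $u$ uncovered parallel classes, and the decisive constraint is (B2'), not the skew-line bound from Proposition~\ref{deltamax}.
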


\begin{proof}

Let $P$ be an uncovered direction, $t_i(P)$ be the number of $i$-secant lines to $\PS$ through $P$, and $t_{\geq2}:=t_2+t_3+\dots+t_q$. Then $t_0+t_1+t_{\geq2}=q$. By Proposition \ref{baxioms} (B1'), $t_0\leq 1$. So we get $t_{\geq 2}\geq q-1-t_1$, hence $|\PS|\geq t_1+2t_{\geq2}\geq t_1+2(q-1-t_1)=2(q-1)-t_1$. Thus $t_1\geq 2(q-1)-|\PS|$, so on the $u$ uncovered directions we see at least $u(2(q-1)-|\PS|)$ tangents to $\PS$.

On the other hand, on a point of $\PS$, by Proposition \ref{baxioms} (B2'), there can be only one tangent with an uncovered direction; hence the total number of tangents with uncovered directions is at most $|\PS|$. These two give $|\PS|\geq u(2(q-1)-|\PS|)$, that is, $|\PS|\geq \frac{u}{u+1}2(q-1)$. Dually, $|\LS|\geq \frac{c}{c+1}2(q-1)$.

\end{proof}

Note that by duality, we may always assume that $|\PS|\leq|\LS|$.

\begin{lemma}\label{highlander}%THERE CAN BE ONLY ONE uncovered direction.

Let $S=\PS\cup\LS$ be a resolving set for $B_q$ and suppose that $|\PS|\leq |\LS|$. If $|S| < 3(q-1)$ then $u\leq 2$. If $|S| < \frac83(q-1)$ then $u\leq 1$; if $|S|<\frac83(q-1)-1$ then $c\leq 4$.

\end{lemma}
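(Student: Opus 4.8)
The plan is to combine the explicit counting bounds from Proposition~\ref{uncovered} with the dual bound from Proposition~\ref{breslbs}, exploiting the assumption $|\PS|\le|\LS|$ so that we can localize almost all the weight of $S$ on the line-side while still forcing $\PS$ to be large once $u$ is big. Concretely, write $|S|=|\PS|+|\LS|$ and recall from \eqref{numu} that $|\PS|\ge\frac{u}{u+1}\,2(q-1)$. The first step is to dispose of the claim ``$|S|<3(q-1)\implies u\le2$'': if $u\ge3$ then $|\PS|\ge\frac34\cdot2(q-1)=\frac32(q-1)$, and since $|\PS|\le|\LS|$ we get $|S|\ge2|\PS|\ge3(q-1)$, a contradiction. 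The same one-line argument, applied with $u\ge2$, gives $|\PS|\ge\frac23\cdot2(q-1)=\frac43(q-1)$, hence $|S|\ge2|\PS|\ge\frac83(q-1)$; so $|S|<\frac83(q-1)$ forces $u\le1$. That settles the first two assertions purely from \eqref{numu} and the normalization $|\PS|\le|\LS|$.

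For the last assertion, ``$|S|<\frac83(q-1)-1\implies c\le4$'', the naive estimate $|\LS|\ge\frac{c}{c+1}2(q-1)$ is not by itself strong enough, because $|\LS|$ can legitimately be as large as almost $|S|$; instead I would also feed in the lower bound on $|\PS|$ coming from Proposition~\ref{breslbs}, namely $|\PS|\ge q-|S|/(q-1)$. Assume for contradiction $c\ge5$. Then \eqref{numc} gives $|\LS|\ge\frac56\cdot2(q-1)=\frac53(q-1)$, so $|\PS|=|S|-|\LS|\le|S|-\frac53(q-1)<\bigl(\frac83(q-1)-1\bigr)-\frac53(q-1)=(q-1)-1=q-2$. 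On the other hand, from the hypothesis $|S|<\frac83(q-1)$ we already know $u\le1$, and in fact we can afford the cruder bound $|S|<3q$; Proposition~\ref{breslbs} then yields $|\PS|\ge q-|S|/(q-1)>q-\frac{3q}{q-1}$, which for $q$ large is close to $q-3$ and in any case exceeds $q-4$, giving $|\PS|\ge q-3$. This already contradicts $|\PS|<q-2$ once we check the arithmetic is tight enough; to be safe I would instead plug the sharp value $|S|<\frac83(q-1)-1$ directly into $|\PS|\ge q-|S|/(q-1)$, obtaining $|\PS|\ge q-\frac{\frac83(q-1)-1}{q-1}=q-\frac83+\frac1{q-1}>q-3$, hence $|\PS|\ge q-2$, contradicting the upper bound $|\PS|<q-2$ derived above. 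Therefore $c\le4$.

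The one genuinely delicate point, and the step I expect to require the most care, is the last one: one must make sure the two estimates $|\PS|\le|S|-\frac{c}{c+1}2(q-1)$ (an upper bound) and $|\PS|\ge q-|S|/(q-1)$ (a lower bound) are genuinely incompatible for $c=5$ under the hypothesis, and in particular that the additive ``$-1$'' in $|S|<\frac83(q-1)-1$ is exactly what is needed to close the gap at the boundary. The subtraction of $1$ is there precisely because $q-\frac83(q-1)=(q-1)-\frac53(q-1)\cdot\frac?{}$ leaves a slack of exactly $\frac13(q-1)-\tfrac13$-ish, and one extra unit of room in $|S|$ is what pushes the two bounds past each other independently of $q$ (for all $q\ge4$, say). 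So the write-up amounts to: (i) the trivial-from-$|\PS|\le|\LS|$ deductions for the $u$-claims, and (ii) a careful two-sided squeeze on $|\PS|$ for the $c$-claim, where the lower bound uses Proposition~\ref{breslbs} and the upper bound uses \eqref{numc}; the whole thing is elementary arithmetic once the right inequalities are lined up, and there is no real combinatorial obstacle beyond bookkeeping the constants.
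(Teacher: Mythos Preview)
Your proposal is correct and follows essentially the same route as the paper. For the $u$-claims you use exactly the paper's argument, $|S|\ge 2|\PS|\ge\frac{u}{u+1}4(q-1)$; for the $c$-claim the paper also combines Proposition~\ref{breslbs} (yielding $|\PS|\ge q-2$ from $|S|<3(q-1)$) with \eqref{numc}, merely rearranging the squeeze into the single inequality $|S|\ge\bigl(\frac{2c}{c+1}+1\bigr)(q-1)-1$ rather than stating it as two incompatible bounds on $|\PS|$---so your ``delicate'' step is in fact the paper's step, with the $-1$ playing exactly the role you suspected.
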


\begin{proof}

By Proposition \ref{uncovered}, $|S|\geq 2|\PS|\geq \frac{u}{u+1}4(q-1)$, where the right-hand side is increasing in $u$. If $u\geq3$ then $|S|\geq 3(q-1)$ follows; if $u\geq2$ then $|S|\geq \frac83(q-1)$ holds.

By Proposition \ref{breslbs}, $|S|<3(q-1)$ implies $|\PS|\geq q-2$, hence $\frac{c}{c+1}2(q-1)\leq |\LS|\leq |S|-q+2$, thus $|S|\geq \left(\frac{2c}{c+1}+1\right)(q-1)-1$. Therefore, if $|S|<\frac83(q-1)-1$ then $\frac53>\frac{2c}{c+1}$ follows, a contradiction if $c\geq 5$.

\end{proof}

\subsubsection{Sharpness of the bounds}

It is natural to ask whether the upper bound $3q-6$ is sharp. It might be the case for Desarguesian biaffine planes; besides Bailey's computations for $q=4$ and $5$, we will show some results later which support this possibility. However, for general biaffine planes, we provide an imaginary construction which shows that one should be careful to think that Proposition \ref{3q-6} is sharp in general. To this end, we need the following notation. For a finite plane $\Pi$, let $\tau(\Pi)$ denote the size of the smallest blocking set in $\Pi$. Given a point $P$ and a line $\ell$ in a projective plane $\Pi_q$, let $\Pi_q\setminus[\ell]$ and $\Pi_q\setminus[P]$ denote the affine plane and the dual affine plane obtained by deleting $\ell$ and the points of $\ell$, or $P$ and the lines of $P$ from $\Pi_q$, respectively; and let $(\Pi_q\setminus[P])^*$ be the dual of $\Pi_q\setminus[P]$ (which is an affine plane).

Suppose now that $\cB$ is a blocking set in $\Pi_q\setminus[\ell]$ and $\cC$ is a covering set in $\Pi_q\setminus[P]$, and assume that $P\in \ell$. Then $B_{\ell,P}:=\Pi_q\setminus([\ell]\cup[P])$ is a biaffine plane in which $\cB\cup\cC$ is a resolving set (as there are no uncovered directions, nor unblocked classes, it is easy to verify this). Moreover, it is also easy to see that for any point $Q\in\cB$ and any line $r\in\cC$, $(\cB\setminus\{Q\})\cup(\cC\setminus\{r\})$ is also a resolving set for $B_{\ell,P}$; hence $\mu(B_{\ell,P})\leq \tau((\Pi_q\setminus[P])^*)+\tau(\Pi_q\setminus[\ell])-2$ follows.

Regarding the explicit size of this quasi-construction, the best lower bound known for the size of a blocking set in a general affine plane of order $q$ is $q+\sqrt{q}+1$ \cite{Bier, BT}, but its sharpness is wide open; the recent paper \cite{DBHSzVdV} shows that there are affine planes of order $q$ containing a blocking set of size at most $4q/3+5\sqrt{q}/3$. Thus the authors suspect that the metric dimension of some specific non-Desarguesian biaffine planes may be significantly smaller than $3q$. This construction idea does not work for Desarguesian biaffine planes as $\tau(\AG(2,q))=2q-1$ (Jamison \cite{Jamison}, Brouwer--Schrijver \cite{BS}).

Also, if we denote by $\tau^a_q$ the size of the smallest blocking set an affine plane of order $q$ may have, a general lower bound on $\mu(B_q)$ larger than $2\tau^a_q-2$ would imply that for a projective plane $\Pi_q$, $P\in\Pi_q$ and $\ell\in\Pi_q$, $P\in\ell$, both $\tau(\Pi_q\setminus[\ell])$ and $\tau(\Pi_q\setminus[P]^*)$ cannot be close to $\tau^a_q$. This would be a rather interesting phenomenon.

\subsection{The metric dimension of Desarguesian biaffine planes}

Now we turn our attention to Desarguesian biaffine planes where much better lower bounds than Proposition \ref{blb} can be obtained. As there cannot be too many lines in $\Pi_q$ not blocked by $\PS$, $\PS$ is almost a blocking set in $\Pi_q$. In such a situation one may apply stability results on blocking sets, which essentially say that if there are not too many skew lines to a point-set $X$ then $X$ can be extended to a blocking set by adding a few points to it. This motivates the following definitions.

\begin{defi}

A point-set $\cX$ of $\Pi_q$ is \emph{$k$-extendable} if $\cX$ can be extended to a blocking set of $\Pi_q$ by adding $k$ points of $\Pi_q$ to it. A set $\cK$ of $k$ points is a \emph{$k$-extender} of $\cX$ if $\cX\cup \cK$ is a blocking set. The set $\cX$ is \emph{$k$-punctured} if it is $k$-extendable, but not $(k-1)$-extendable.

\end{defi}

\begin{defi}

For a point $P\in\Pi_q$ and a point-set $\cX$, let the \emph{index} of $P$ with respect to $\cX$, $\ind_\cX(P)$, be the number of skew lines through $P$ to $\cX$.

\end{defi}

The following result is, in fact, equivalent with the formerly mentioned lower bound of Jamison and Brouwer--Schrijver for blocking sets in $\AG(2,q)$.

\begin{result}[Blokhuis--Brouwer \cite{BlBr}]\label{numtangents}

Let $\cB$ be a blocking set of $\PG(2,q)$. Then each essential point of $\cB$ is incident with at least $2q+1-|\cB|$ tangents to $\cB$.

\end{result}

\begin{lemma}\label{kext}

Let $\cK$ be a $k$-extender of a $k$-punctured point-set $\cX\subset \PG(2,q)$. If $P\in \PG(2,q)\setminus \cK$ then $\ind_\cX(P)\leq k$, and if $P\in \cK$ then $\ind_\cX(P)\geq 2q-|\cX|-k+1$.

\end{lemma}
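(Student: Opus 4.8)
The plan is to work throughout with the blocking set $\cB:=\cX\cup\cK$ and to read off both inequalities from elementary counting on the lines through $P$, invoking Result \ref{numtangents} for the lower bound. The first point to settle is that a $k$-extender of a $k$-punctured set must be disjoint from $\cX$: if $\cK$ met $\cX$ in $j\geq 1$ points, then $\cK\setminus\cX$ would be a $(k-j)$-extender of $\cX$ with $k-j\leq k-1$, contradicting that $\cX$ is not $(k-1)$-extendable. Hence $|\cB|=|\cX|+k$, and $\cB$ is a genuine blocking set of $\PG(2,q)$ whose points outside $\cX$ are exactly the $k$ points of $\cK$.

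For the case $P\in\PG(2,q)\setminus\cK$, I would argue as follows. Let $\ell$ be a line through $P$ skew to $\cX$. Since $\cB$ blocks every line, $\ell$ must contain a point of $\cK$. Two distinct lines through $P$ share only the point $P$, and $P\notin\cK$, so no point of $\cK$ lies on two such lines; thus assigning to each skew line to $\cX$ through $P$ a point of $\cK$ lying on it defines an injection into $\cK$. Therefore the number of skew lines to $\cX$ through $P$ is at most $|\cK|=k$, i.e. $\ind_\cX(P)\leq k$.

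For the case $P\in\cK$, I would first observe that $P$ is essential for $\cB$: if $\cB\setminus\{P\}=\cX\cup(\cK\setminus\{P\})$ were still a blocking set, then $\cK\setminus\{P\}$ would be a $(k-1)$-extender of $\cX$, contradicting $k$-puncturedness. Now Result \ref{numtangents} applies to the essential point $P$ of $\cB$ and yields at least $2q+1-|\cB|=2q-|\cX|-k+1$ tangents to $\cB$ through $P$. Each such tangent meets $\cB$ only in $P$, which lies in $\cK$; since $\cK$ is disjoint from $\cX$, the tangent misses $\cX$ entirely, hence is skew to $\cX$. Consequently $\ind_\cX(P)$ is at least the number of these tangents, namely $2q-|\cX|-k+1$.

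I do not expect a serious obstacle here; the argument is short and purely combinatorial. The only steps that need a moment's care are the two minimality consequences — that $\cK\cap\cX=\emptyset$ and that every point of $\cK$ is essential for $\cB$ — both of which follow immediately from the definition of $k$-punctured. Everything else rests on the standard fact that two lines through a common point have no other point in common, together with a direct application of the Blokhuis–Brouwer bound.
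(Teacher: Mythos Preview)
Your proof is correct and follows essentially the same route as the paper's: form the blocking set $\cB=\cX\cup\cK$, observe that every point of $\cK$ is essential for $\cB$, apply Result \ref{numtangents} for the lower bound, and use that every skew line to $\cX$ must hit $\cK$ for the upper bound. Your explicit verification that $\cK\cap\cX=\emptyset$ is a detail the paper leaves implicit but relies on when writing $|\cX^*|=|\cX|+k$.
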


\begin{proof}

Let $\cX^*=\cX\cup \cK$. Clearly, every skew line to $\cX$ contains a point of $\cK$, and each point of $\cK$ is essential for $\cX^*$. By Result \ref{numtangents}, there are at least $2q+1-|\cX^*|=2q-|\cX|-(k-1)$ tangents to $\cX^*$ through them. As these lines are all skew to $\cX$, we have $\ind_\cX(P)\geq 2q-|\cX|-k+1$ for all $P\in \cK$. Furthermore, for any point $P\in\PG(2,q)\setminus \cX^*$, at most $k$ lines of $[P]$ can be blocked by $\cK$; hence $\ind_\cX(P)\leq k$. Finally, for a point $P\in \cX$, $\ind_\cX(P)=0$.

\end{proof}

Now suppose that $S=\PS\cup\LS$ is a resolving set for $\BG(2,q)$; by $\PG(2,q)$ we denote the ambient Desarguesian projective plane.

\begin{prop}\label{notkext}

Suppose that $S=\PS\cup\LS$ is a resolving set for $\BG(2,q)$, $|S|\leq 3q-(k+u+3)$ and $c<2q-|\PS|-k$. Then $\PS\subset\PG(2,q)$ is not $k$-extendable.

\end{prop}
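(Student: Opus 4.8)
The plan is a proof by contradiction. Suppose $\PS$ is $k$-extendable. Since $\PS\subset B_q$ contains no point of $\linf$, it cannot block $\linf$, hence is not a blocking set of $\PG(2,q)$; so $\PS$ is $j$-punctured for some $j$ with $1\le j\le k$, and I fix a $j$-extender $\cK$ with $|\cK|=j$. Lemma \ref{kext} then supplies the two facts I will use: $\ind_{\PS}(P)\ge 2q-|\PS|-j+1$ for every $P\in\cK$, and $\ind_{\PS}(P)\le j$ for every $P\notin\cK$.

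The first step is to locate $\cK$ relative to $\linf$. I would note that the skew lines of $\PG(2,q)$ through the special point $(\infty)$ are precisely $\linf$ together with the $c$ vertical lines sitting on unblocked non-adjacency classes, so $\ind_{\PS}((\infty))=c+1$; were $(\infty)\in\cK$, Lemma \ref{kext} would force $c+1\ge 2q-|\PS|-j+1\ge 2q-|\PS|-k+1$, contradicting the hypothesis $c<2q-|\PS|-k$. Thus $(\infty)\notin\cK$. On the other hand $\cK$ must block $\linf$ (as $\PS$ does not), so $\cK$ contains a point $D_1\in\linf$, which by the previous line is not $(\infty)$ and hence is an ordinary direction of $B_q$.

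The heart of the argument is a local count at $D_1$. The lines of $\PG(2,q)$ through $D_1$ are $\linf$ (which is skew to $\PS$) and the $q$ lines of $B_q$ with direction $D_1$; hence the number of $B_q$-lines of direction $D_1$ skew to $\PS$ equals $\ind_{\PS}(D_1)-1\ge 2q-|\PS|-j$. Now Proposition \ref{baxioms}(B1') restricts skew \emph{outer} lines severely: if $D_1$ is a covered direction then at most one of these skew $D_1$-lines is outer, so $\LS$ carries at least $2q-|\PS|-j-1$ lines of direction $D_1$; if $D_1$ is uncovered then all these skew lines are outer, yet at most one outer skew line may carry an uncovered direction, forcing $2q-|\PS|-j\le1$, i.e.\ $|\PS|\ge 2q-j-1$. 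In either case I would finish by adding the trivial observation that $\LS$ carries at least one line in each of the $q-u$ covered directions: in the covered case this yields $|\LS|\ge(2q-|\PS|-j-1)+(q-u-1)$, and in the uncovered case $|\LS|\ge q-u$ together with $|\PS|\ge 2q-j-1$; both give $|S|=|\PS|+|\LS|\ge 3q-j-u-2\ge 3q-k-u-2$, contradicting $|S|\le 3q-(k+u+3)$.

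The point I expect to be delicate is the choice of what to count. A global tally of the skew lines to $\PS$ in $\PG(2,q)$ — the kind of estimate behind Propositions \ref{deltamax} and \ref{breslbs} — is too lossy to beat a bound of size $3q-O(1)$; the leverage comes entirely from localizing at the single infinite extender point $D_1$, where (B1') converts ``many skew lines of direction $D_1$'' almost one-for-one into ``many lines of $\LS$ of direction $D_1$'', after which the $q-u$ remaining covered directions contribute the final $\sim q$ lines. No stability machinery beyond Lemma \ref{kext} is needed; the only real care required is the bookkeeping of $\ind_{\PS}$ in the projective closure — in particular the identity $\ind_{\PS}((\infty))=c+1$ and the fact that the bound $2q-|\PS|-j-1$ plays a different role (a constraint on $|\PS|$ versus on $\LS$) according as $D_1$ is covered or uncovered.
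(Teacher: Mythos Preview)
Your argument is correct and follows essentially the same route as the paper's proof: assume $k$-extendability, pick an extender point on $\linf$, exclude $(\infty)$ via the hypothesis on $c$, and then use (B1') at the remaining direction $D_1$ to push enough lines into $\LS$ and contradict $|S|\le 3q-(k+u+3)$. The only cosmetic differences are that you keep the actual punctured-ness $j\le k$ rather than reducing to the $k$-punctured case, and you split the analysis at $D_1$ into the covered and uncovered subcases, which the paper handles in one stroke (your case split is in fact a bit cleaner, since the paper's phrase ``$u$ of which are uncovered'' tacitly assumes $D_1$ is covered, although the final inequality still goes through in the uncovered case as well).
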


\begin{proof}

Suppose to the contrary that $K$ is a $k$-extender of $\PS$. We may assume (by choosing the smallest appropriate $k$) that $\PS$ is $k$-punctured. As $\PS\cap\linf=\emptyset$, there must be a point $P\in \linf\cap K$. Then, by Lemma \ref{kext}, $\ind_{\PS}(P)\geq 2q-|\PS|-k+1$. If $P=(\infty)$ then this means that the number $c$ of unblocked classes with respect to $S$ is at least $2q-|\PS|-k$, a contradiction. If $P\neq(\infty)$ then there are at least $2q-|\PS|-k$ biaffine skew lines to $\PS$ through $P$, among which there can be only one not in $\LS$ (Proposition \ref{baxioms} (B1')). As there are $q-1$ directions different from $P$ in $\BG(2,q)$, $u$ of which are uncovered, $|\LS|\geq (2q-|\PS|-k-1)+(q-1-u) = 3q-|\PS|-k-u-2$ follows, so $|S|=|\PS|+|\LS|\geq 3q-(k+u+2)$, a contradiction.

\end{proof}

Next we proceed by showing that if $S$ is small enough and $|\PS|\leq|\LS|$ then $\PS$ is indeed $k$-extendable for a suitable value of $k$ which, under certain conditions, will lead to lower bounds on the size of a biaffine resolving set. To show extendability, we rely on stability results on blocking sets of $\PG(2,q)$.

\begin{result}[Sz\H{o}nyi--Weiner \cite{stabblp}]\label{stblp}

Let $\cB$ be a set of points in $\PG(2,q)$, $q=p$ prime, with at most $\frac32(q+1)-\varepsilon$ points. Suppose that the number $\delta$ of skew lines to $\cB$ is less than $\left(\frac23(\varepsilon+1)\right)^2/2$. Then there is a line that contains at least $q-\frac{2\delta}{q+1}$ points of $\cB$.

\end{result}

As a line is a blocking set of $\PG(2,q)$ of size $q+1$, the above theroem claims that $\cB$ is $\left(\left\lfloor\frac{2\delta}{q+1}\right\rfloor+1\right)$-extendable. Note that if we set $\varepsilon=\frac32(q+1)-|\cB|$, the assumption $\delta<\left(\frac23(\varepsilon+1)\right)^2/2$ can be rephrased as $|\cB|<\frac32(q+1-\sqrt{2\delta})+1$.

\begin{result}[Sz\H{o}nyi--Weiner \cite{stabbl}]\label{stblq}

Let $\cB$ be a set of points in $\PG(2,q)$, $q=p^h$, $h\geq 2$. Denote the number of skew lines to $\cB$ by $\delta$ and suppose that $\delta\leq \frac{1}{100}pq$. Assume that $|\cB|<\frac32(q+1-\sqrt{2\delta})$. Then $\cB$ can be extended to a blocking set by adding at most

\[\frac{\delta}{2q+1-|\cB|} + \frac{1}{100}\]

points to it.

\end{result}

The next theorem shows that $\mu(\BG(2,q))\leq cq$ is not true in general for any constant $c<3$. Note that this means that there is no `generic' construction (that is, a construction relying on the axioms of biaffine planes only) of this size.

\begin{theorem}

Suppose that $S=\PS\cup\LS$ is a resolving set for $\BG(2,q)$, $q=p^h$, $p$ prime. Assume that $(\text{i}\,)$ $h=1$ and $q=p\geq17$, or $(\text{ii}\,)$ $h\geq2$ and $p\geq 400$. Then $|S| > 3q-9\sqrt{q}$.

\end{theorem}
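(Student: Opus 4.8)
The plan is to argue by contradiction: assume $|S|\le 3q-9\sqrt q$ and derive a contradiction with the stability results. By duality we may assume $|\PS|\le|\LS|$, so $|\PS|\le \tfrac12|S|\le \tfrac32 q - \tfrac92\sqrt q$, which is comfortably below $\tfrac32(q+1)$; this is exactly the regime where Results~\ref{stblp} and \ref{stblq} apply. First I would bound the number $\delta$ of skew lines to $\PS$ in $\PG(2,q)$. Every skew line in $\PG(2,q)$ is either $\linf$, or a biaffine skew line to $\PS$, or a biaffine line meeting $\linf$ in an uncovered direction; by Proposition~\ref{deltamax} the biaffine skew lines number at most $|\LS|+q-u+1$, and there are $u$ further skew lines through the uncovered directions (minus the ones already counted), plus $\linf$ itself. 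A clean accounting gives $\delta\le |\LS|+q+2 \le |S|+q+2 \le 4q-9\sqrt q + 2$, so $\delta$ is of order $4q$. In particular $\delta<\tfrac{1}{100}pq$ holds in case (ii) since $p\ge 400$, and $\delta<(\tfrac23(\varepsilon+1))^2/2$ holds in case (i): with $\varepsilon=\tfrac32(q+1)-|\PS|\ge \tfrac92\sqrt q$ (roughly), the right side is of order $\sqrt q^{\,2}=q$ times a constant $>4$ once $q\ge 17$ — this is the point where the constant $9$ and the threshold $17$ get pinned down, and I would check the inequality $|\PS|<\tfrac32(q+1-\sqrt{2\delta})+1$ directly, substituting the bounds on $|\PS|$ and $\delta$.

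Next, having verified the hypotheses, the stability results tell us that $\PS$ is $k$-extendable with $k=\lfloor 2\delta/(q+1)\rfloor+1$ in case (i), and $k\le \delta/(2q+1-|\PS|)+\tfrac{1}{100}$ (rounded up) in case (ii). In both cases $\delta=O(q)$ forces $k$ to be a small \emph{constant} — roughly $k\le 8$ or so once the constants are plugged in, and certainly $k=o(\sqrt q)$. Now I would invoke Proposition~\ref{notkext}: that proposition says $\PS$ is \emph{not} $k$-extendable provided $|S|\le 3q-(k+u+3)$ and $c<2q-|\PS|-k$. The second condition is automatic because $c\le q$ (there are only $q$ non-adjacency classes) while $2q-|\PS|-k\ge 2q-\tfrac32 q - O(\sqrt q) = \tfrac12 q - O(\sqrt q) > q$ fails — wait, that needs care; in fact $2q-|\PS|-k \ge q/2 - O(\sqrt q)$, which can be smaller than $q$, so I would instead argue via Lemma~\ref{highlander} or Proposition~\ref{uncovered} that $|\PS|$ cannot be much larger than $q$ is forced, or more simply note that if $c\ge 2q-|\PS|-k$ then $|\PS|\ge 2q-c-k\ge q-k$, and combined with the dual bound one still reaches $|S|\ge 3q-9\sqrt q$ territory — this auxiliary case must be dispatched separately, likely using that an unblocked class contributes a skew line picture dually. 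The first condition $|S|\le 3q-(k+u+3)$ holds since $k+u+3$ is a constant much smaller than $9\sqrt q$ (here $u\le 2$ or even $u\le 1$ by Lemma~\ref{highlander}, as $|S|<\tfrac83(q-1)$ when $9\sqrt q < q/3$, i.e. $q>729$; for the few smaller $q$ one uses $u\le 2$, adjusting constants). Thus Proposition~\ref{notkext} gives that $\PS$ is not $k$-extendable, contradicting the extendability just derived.

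The main obstacle I anticipate is the bookkeeping of constants: one must simultaneously ensure (a) $|\PS|$ and $\delta$ satisfy the precise numerical hypotheses of the relevant Sz\H{o}nyi--Weiner theorem, (b) the resulting $k$ is small enough that $k+u+3 \le 9\sqrt q - (\text{slack})$, and (c) the side condition $c<2q-|\PS|-k$ in Proposition~\ref{notkext} is met or else handled by a separate duality argument. The thresholds $q=p\ge 17$ and $p\ge 400$ are presumably exactly what is needed to make all three fit with the choice $9\sqrt q$; I would not expect to optimize these but simply verify the chain of inequalities holds under the stated hypotheses. A secondary subtlety is the exact count of $\delta$ in terms of $u$ and the possible extra skew line of Proposition~\ref{baxioms}~(B1'), but this is routine once one is careful that $\linf$ and the through-uncovered-direction lines are not double-counted.
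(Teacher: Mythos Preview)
Your overall architecture matches the paper's exactly: assume $|S|\le 3q-9\sqrt q$, take $|\PS|\le|\LS|$ by duality, bound $\delta$, apply the appropriate Sz\H onyi--Weiner stability result to get $k$-extendability with $k\le 8$, then contradict Proposition~\ref{notkext}. The constants also line up: the paper gets $\delta<4q$, $k\le 8$, and uses Lemma~\ref{highlander} for $u\le 2$ (not $u\le 1$; the hypothesis $|S|<\tfrac83(q-1)$ need not hold here).

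There is one genuine gap, and you flagged it yourself: the condition $c<2q-|\PS|-k$ in Proposition~\ref{notkext}. Your suggested workarounds do not close it. Noting $|\PS|\ge q-k$ is useless (you already have $|\PS|\ge q-2$), and ``dispatching separately by duality'' is circular. The paper's move is exactly the one you name but then misapply: Proposition~\ref{uncovered}, inequality~\eqref{numc}, bounds $c$ directly, not $|\PS|$. Since $|\PS|\ge q-2$ one has $|\LS|\le |S|-(q-2)\le 2q-9\sqrt q+2$, and \eqref{numc} gives $\tfrac{c}{c+1}\,2(q-1)\le |\LS|$. Combining these forces $1-\tfrac{1}{c+1}<1-\tfrac{4}{\sqrt q}$, i.e.\ $c\le \sqrt q/4-1$. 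This is far smaller than $2q-|\PS|-k\ge \tfrac12 q+\tfrac92\sqrt q-8$, so the hypothesis of Proposition~\ref{notkext} is satisfied and the contradiction goes through.

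A minor point: your skew-line accounting in $\PG(2,q)$ omits the vertical lines (the unblocked non-adjacency classes). The correct count is $\delta\le(\text{biaffine skew})+c+1\le |\LS|+q+c+1$; using $c\le q-1$ and $|\PS|\ge q-2$ then gives $\delta\le |S|+q+2<4q$, the bound you stated.
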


\begin{proof}

Suppose to the contrary that $|S|\leq 3q-9\sqrt{q}$. Consider $S$ embedded into $\PG(2,q)$; thus, in the sequel, we have to take into account the non-adjacency classes and the line at infinity as lines. Note that as $\PS\neq\emptyset$, $c\leq q-1$. Propositions \ref{breslbs} and \ref{deltamax} yield $|\PS|\geq q-2$ and $\delta\leq |\LS|+q+c+1=|S|-|\PS|+2q\leq |S|+q+2 < 4q$.

By duality, we may assume $|\PS|\leq |\LS|$, and hence $|\PS|\leq|S|/2 < \frac32q-4\sqrt{q}<\frac32q-1$. $|S|/2 < \frac32(q+1-\sqrt{2|S|+2q+4})$ follows from the indirect assumption, hence $|\PS| < \frac32(q+1-\sqrt{2\delta})$. In case (ii), $\frac{1}{100}pq\geq4q>\delta$ also holds, thus we may use Results \ref{stblp} and \ref{stblq} to deduce that $\PS$ is $k$-extendable with $k=\left(\left\lfloor\frac{2\delta}{q+1}\right\rfloor+1\right)$ in case (i), and with $k=\left\lfloor\frac{\delta}{2q+1-|\PS|} + \frac{1}{100}\right\rfloor$ in case (ii). In both cases, $k\leq 8$.

By Lemma \ref{highlander}, $u\leq 2$, thus $|S|\leq 3q-13 \leq 3q-(k+u+3)$ holds. Then \eqref{numc} of Proposition \ref{uncovered} and $q>16$ give

\[\frac{c}{c+1}2q-2 < \frac{c}{c+1}2(q-1) \leq |\LS|=|S|-|\PS|\leq 2q-9\sqrt{q}+2\ < 2q-8\sqrt{q}-2,\]

thus

\[1-\frac{1}{c+1}\leq 1-\frac{4}{\sqrt{q}},\]

so $c\leq\sqrt{q}/4-1$. As $|\PS|\leq \frac32q-\frac92\sqrt{q}$ and $k\leq 8$, $\sqrt{q}/4-1\leq 2q-|\PS|-8\leq 2q-|\PS|-k$ follows. Therefore, by Proposition \ref{notkext}, $\PS$ is not $k$-extendable, a contradiction.

\end{proof}

Let us remark that the above proof could give a slightly better estimate than $|S|>3q-9\sqrt{q}$ but, as we do not think the result sharp, we decided to use simpler formulas. To give a more general but considerably weaker lower bound on $\mu(\BG(2,q))$, we need the following result conjectured by Metsch and proved by Sz\H{o}nyi and Weiner.

\begin{result}[Sz\H{o}nyi--Weiner {\cite[Theorem 4.1]{WSz2014}}] \label{mconj1}

Let $\cB$ be a point set in $\PG(2,q)$. Pick a point $P$ not from $\cB$ and assume that through $P$ there pass exactly $r$ lines meeting $\cB$ (that is containing at least $1$ point of $\cB$). Then the total number of lines meeting $\cB$ is at most \[1 + rq + (|\cB| - r)(q + 1 - r).\]

\end{result}

It is convenient to give an equivalent formulation of Result \ref{mconj1}.

\begin{result}\label{Metsch}

Let $\delta$ denote the number of skew lines to a point set $\cB$ in $\PG(2,q)$. Then for any point $P\notin \cB$, 

\begin{equation}
\ind_\cB(P)^2-(2q+1-|\cB|)\ind_\cB(P)+\delta\geq 0.  \label{metschineq}
\end{equation}

\end{result}

This quadratic inequality means that the index of a point is either small or large, which will be essential for the next proof.

\begin{proposition}\label{3ext}

Assume that $|S| < \frac83q-7$ and $|\PS|\leq|\LS|$. Then $\PS$ is $3$-extendable.

\end{proposition}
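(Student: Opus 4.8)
The plan is to argue by contradiction: if $\PS$ is not $3$-extendable then it is $k$-punctured for some $k\ge 4$, and one exploits that $\PS$ has very few skew lines in $\PG(2,q)$. As $q$ is arbitrary, the stability input will be the Metsch--Sz\H{o}nyi--Weiner inequality (Result~\ref{Metsch}), not Results~\ref{stblp}--\ref{stblq}. First I would record the numerics forced by $|S|<\frac{8}{3}q-7$ and $|\PS|\le|\LS|$: Proposition~\ref{breslbs} gives $|\PS|>q-\frac{8}{3}$, hence $|\PS|\ge q-2$, while $|\PS|\le|S|/2<\frac{4}{3}q-\frac{7}{2}$; Lemma~\ref{highlander} gives $u\le 1$ and $c\le 4$. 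Viewing $S$ inside $\PG(2,q)$, so that the $q$ non-adjacency classes and $\linf$ count among the lines, the number $\delta$ of skew lines to $\PS$ satisfies $\delta\le|\LS|+q+c+1\le|S|+7<\frac{8}{3}q$; writing $a:=|\PS|-q+2\ge 0$ (so $n:=2q+1-|\PS|=q+3-a$, $a<\frac{1}{3}q-\frac{3}{2}$, $n>\frac{2}{3}q+\frac{9}{2}$) one even gets $\delta<\frac{8}{3}q-a$, while $|\LS|<\frac{5}{3}q-5$.

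Now fix a $k$-extender $\cK$ of $\PS$ with $|\cK|=k\ge 4$. By Lemma~\ref{kext} each point of $\cK$ has index $\ge n-k$ with respect to $\PS$, and each point outside $\cK$ has index $\le k$. Since every skew line meets $\cK$, and two points of $\cK$ lie on at most one common line,
\[\delta\ \ge\ k(n-k)-\binom{k}{2};\]
moreover if no three points of $\cK$ are collinear and none of the $\binom{k}{2}$ lines joining two of them is skew, the correction term vanishes and $\delta\ge k(n-k)$. Substituting $n=q+3-a$ and comparing with $\delta<\frac{8}{3}q-a$ produces a lower bound on $a$ which, together with $a<\frac{1}{3}q-\frac{3}{2}$, is contradictory: for $k=4$ the ``general position'' bound $\delta\ge 4(n-4)$ contradicts these for every $q$, and $\delta\ge 4(n-4)-6$ does so for $q\ge 17$, while for $k\ge 5$ the analogous inequalities are strictly more restrictive. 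Thus, in particular, $k\le 4$ once $k$ is known to be small; and a small minimal extender is exactly what should be expected, because Result~\ref{Metsch} gives $\ind_{\PS}(P)\le\rho_-$ or $\ind_{\PS}(P)\ge\rho_+$ for every $P\notin\PS$ --- here $\rho_\pm$ are the roots of $x^2-nx+\delta$, and $\delta<\frac{8}{3}q$, $n>\frac{2}{3}q$ force $\rho_-<8$ and $\rho_+>n-8$ --- so together with $\ind_{\PS}(P)\le k$ (Lemma~\ref{kext}) the set $\cK$ is pinned down as the set of high-index points and $\delta\ge k\rho_+-\binom{k}{2}$ sharpens the bound further.

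The genuinely delicate point is to exclude a large minimal extender, where the $\binom{k}{2}$ term swamps the estimate. Here I would bring in the resolving-set axioms themselves: by (B1$'$) the point $(\infty)$ has index at most $c+1\le 5$ and any uncovered direction has index at most $2$, and a point of $\cK$ must lie on the skew line $\linf$; pushing this, such a point is a covered direction and forces $\LS$ to contain almost an entire parallel class of mutually skew lines, which contradicts $|\LS|<\frac{5}{3}q-5$ in combination with what (B1) and (B2) impose on the covered points of the $q-c$ blocked non-adjacency classes. The few small orders not covered by the estimate above --- only $8\le q\le 16$, since the hypothesis is vacuous for $q\le 7$ --- are disposed of by reading the same inequalities with integrality in mind, or, for the smallest orders, by Bailey's computations.

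The main obstacle, then, is twofold and intertwined: the numerical margin in the inclusion--exclusion step is razor-thin (in generic position the contradiction is automatic, so everything rests on the degenerate configurations of $\cK$ and on small $q$), and ruling out a large extender cannot be done by counting alone but only by re-injecting the resolving-set axioms to constrain where the high-index points of $\cK$ can sit --- above all, to forbid the configuration in which $\LS$ degenerates to (essentially) one parallel class of skew lines.
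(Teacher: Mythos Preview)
Your proposal identifies the right tool (Result~\ref{Metsch}) but applies it in a way that leaves two genuine gaps. First, by starting from a minimal $k$-extender $\cK$ and invoking Lemma~\ref{kext}, your index lower bound for points of $\cK$ is $n-k$, which \emph{degrades with $k$}; so the estimate $\delta\ge k(n-k)-\binom{k}{2}$ collapses for large $k$. Your proposed remedy --- identifying $\cK$ with the set of high-index points via Metsch plus Lemma~\ref{kext} --- is circular: that identification requires $k<\rho_+$, which is precisely what you are trying to establish. The fallback via the resolving axioms is only a sketch; for instance, the claim that a point of $\cK$ on $\linf$ must be a covered direction presupposes that points of $\cK$ have high index, which is unavailable when $k$ is large. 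Second, even for $k=4$ your bound $\delta\ge 4(n-4)-6$ only contradicts $\delta<\tfrac{8}{3}q-a$ when $q\ge 17$; Bailey's computations cover only $q\le 5$, so $8\le q\le 16$ remains untreated.

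The paper avoids both issues by reversing the logic. It substitutes $\ind=4$ directly into \eqref{metschineq} and obtains $16-4(2q+1-|\PS|)+\delta\le 19-8q+3|S|\le -2$, so no point has index $4$; hence every point has $\ind\le 3$ or $\ind\ge n-3$ (equivalently $\rho_-<4$, sharper than your $\rho_-<8$). It then \emph{defines} $\cK$ as the set of high-index points; a short argument (bounding $\delta\le 1+(m-1)(q+1)$ on a skew line whose maximal index is $m\le 3$ and feeding this back into \eqref{metschineq}) shows every skew line meets $\cK$, so $\cK$ is an extender; and four points of index $\ge n-3$ would force $\delta\ge 4(n-3)-6>\tfrac{8}{3}q$, a contradiction uniform in $q$. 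The point is that the index bound $n-3$ comes from Metsch alone and is independent of $|\cK|$, so there is no large-$k$ case and no small-$q$ case to handle separately.
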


\begin{proof}

Lemma \ref{highlander} yields $u\leq1$ and $c\leq4$.

As in the previous proof, we use $|\PS|\geq q-2$, $\delta\leq |\LS|+q+c+1=|S|-|\PS|+q+c+1\leq |S|+7$ and $|\PS|\leq |S|/2\leq4q/3-\frac72$. Apply Result \ref{Metsch} with $\cB=\PS$, substitute $\ind_\cB(P)=4$ into \eqref{metschineq} and use $|\PS|\leq |S|/2$ and $\delta\leq|S|+7$ to obtain 

\[0\leq 16-4(2q+1-|\PS|)+\delta\leq 19-8q+3|S| \leq -2,\]

a contradiction. Thus neither $4$ nor $2q-|\PS|-3$ can be the index of a point of $\PG(2,q)$. Thus for every point $P\in\PG(2,q)$, $\ind_{\PS}(P)\leq 3$ or $\ind_{\PS}(P)\geq 2q-|\PS|-2$.

Let $\cK$ be the set of points with large index and $|\cK|=k$. Let $\ell$ be a skew line to $\PS$. If we had $\max_{P\in\ell}\{\ind_{\PS}(P)\}=m\leq 3$ then $\delta\leq 1+(m-1)(q+1)=(m-1)q+m$ and, by Result \ref{Metsch} and $|\PS|\leq \frac43q-\frac72$,
\[\begin{aligned}
0\leq m^2-(2q+1-|\PS|)m+(m-1)q+m &\leq m^2-\left(\frac23q+\frac92\right)m + (m-1)q+m\\
&=\left(\frac{m}{3}-1\right)q+m^2-\frac72m<0
\end{aligned}\]

followed, a contradiction. Thus every skew line contains a point with index greater than 3, thus at least $2q-|\PS|-2$. This means that $\PS\cup \cK$ is a blocking set for $\PG(2,q)$. Now it remains to check $k\leq3$. Suppose to the contrary that there exist four points with large index. When considering the $i$th point of these, we see at least $(2q-|\PS|-2)-(i-1)$ skew lines to $\PS$ not incident with the first $i-1$ points ($i=1,2,3,4$), and thus $8q/3 > |S|+7\geq\delta \geq 4(2q-|\PS|-2)-6 \geq 8q/3$, a contradiction. 

\end{proof}

\begin{theorem}\label{biaffin}

The metric dimension of $\BG(2,q)$ is at least $8q/3-7$.

\end{theorem}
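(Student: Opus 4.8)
The goal is to show $\mu(\BG(2,q))\geq 8q/3-7$. Suppose to the contrary that $S=\PS\cup\LS$ is a resolving set for $\BG(2,q)$ with $|S|<8q/3-7$; by duality we may assume $|\PS|\leq|\LS|$. The plan is to combine Proposition \ref{3ext} with Proposition \ref{notkext} (with the parameter $k=3$) to reach a contradiction, so the main task is simply to check that the hypotheses of Proposition \ref{notkext} are met with $k=3$ under the assumption $|S|<8q/3-7$.

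First, invoke Proposition \ref{3ext}: since $|S|<\frac83q-7$ and $|\PS|\leq|\LS|$, the point-set $\PS$ is $3$-extendable in $\PG(2,q)$. Next, Lemma \ref{highlander} gives $u\leq1$ and $c\leq4$ (using $|S|<\frac83(q-1)$ and $|S|<\frac83(q-1)-1$, which follow from $|S|<\frac83q-7$ for $q$ large enough; one should note the mild constraint on $q$ this imposes, or simply require $q\geq 13$ say so that $\frac83q-7 \leq \frac83(q-1)-1$). With $k=3$ and $u\leq1$, the inequality $|S|\leq 3q-(k+u+3)=3q-7$ holds because $\frac83q-7\leq 3q-7$ for $q\geq0$; hence the first hypothesis of Proposition \ref{notkext} is satisfied. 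For the second hypothesis, $c<2q-|\PS|-k=2q-|\PS|-3$: since $|\PS|\leq|S|/2<\frac43q-\frac72$ and $q$ is large, $2q-|\PS|-3>2q-\frac43q=\frac23q$, which is well above $c\leq4$; so $c<2q-|\PS|-3$ holds. Therefore Proposition \ref{notkext} applies and tells us that $\PS$ is \emph{not} $3$-extendable, contradicting the conclusion of Proposition \ref{3ext}. This contradiction proves $|S|\geq 8q/3-7$, as claimed.

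The only real care needed is bookkeeping on the constants and on the range of $q$: one must confirm that $|S|<\frac83q-7$ implies all three of $|S|<\frac83(q-1)$, $|S|<\frac83(q-1)-1$ (for Lemma \ref{highlander}) and the two displayed inequalities feeding Proposition \ref{notkext}, and that the bound $|\PS|\leq|S|/2$ combined with $c\leq4$ really does give room for $c<2q-|\PS|-3$. None of this is difficult, but since Propositions \ref{3ext} and \ref{notkext} were stated with slightly different threshold expressions ($\frac83q-7$ versus $3q-(k+u+3)$), one should make the implication between them explicit. I do not anticipate a genuine obstacle here — the content of the theorem is entirely carried by the two preceding propositions, and the theorem is just the formal deduction combining "$\PS$ is $3$-extendable" with "$\PS$ is not $3$-extendable."
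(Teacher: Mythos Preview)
Your approach is exactly the paper's: assume $|S|<\frac83q-7$, use Proposition \ref{3ext} to get that $\PS$ is $3$-extendable, then feed $k=3$, $u\leq1$, $c\leq4$ into Proposition \ref{notkext} to obtain the contradiction. Two small bookkeeping remarks: first, your worry about Lemma \ref{highlander} is unfounded, since $\frac83q-7=\frac83(q-1)-\frac{13}{3}<\frac83(q-1)-1$ for every $q$, so both $u\leq1$ and $c\leq4$ follow with no constraint on $q$; second, the genuine small-$q$ issue (needed for $c<2q-|\PS|-3$) is handled in the paper by observing that for $q\leq7$ one has $2q-2\geq\frac83q-7$, so Proposition \ref{blb} already gives the bound, and the argument via Propositions \ref{3ext} and \ref{notkext} is run only for $q\geq8$.
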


\begin{proof}

If $q\leq 7$ then $2q-2\geq 8q/3-7$, thus Proposition \ref{blb} gives the result. Let $q\geq8$. Suppose to the contrary that there is a resolving set $S=\PS\cup\LS$ for $\BG(2,q)$ of size $|S| < 8q/3-7$. By duality we may assume that $|\PS|\leq|\LS|$. By Proposition \ref{3ext}, $\PS$ is $3$-extendable.

By Lemma \ref{highlander}, $u\leq 1$ and $c\leq 4$. Let $k=3$. Then $|S|\leq 3q-(k+u+3)$ immediately follows from $u\leq 1$ and $|S| < 8q/3-7$, and $c<2q-|\PS|-k$ follows from $|\PS|<4q/3-7/2$, $c\leq 4$ and $q\geq 8$. Thus we may apply Proposition \ref{notkext} to obtain that $\PS$ is not $3$-extendable, a contradiction.

\end{proof}

\section{On the metric dimension of generalized quadrangles}\label{sec:gq}

Generalized quadrangles are well-known and much studied objects in finite geometry, see \cite{PT} as for a comprehensive book in the topic. In this section we consider two particular types of quadrangles. A generalized quadrangle of order $(s,t)$ is denoted by $\GQ(s,t)$.

\subsection{Resolving sets for $\GQ(s,1)$}

The simplest generalized quadrangles have order $(s,1)$, or dually, $(1,t)$. These objects are called \emph{grids}, because the points of the unique $\GQ(s,1)$ form an $(s+1)\times(s+1)$ grid and the lines belong to two distinct parallel classes, each of them of size $s+1$ (corresponding to the rows and the coloumns of the grid).

In general, the grid graph $G_{n,m}$ has an $n$ by $m$ grid as its vertex set where two vertices are adjacent if and only if they are in the same row or coloumn. (Grid graphs are also called rook graphs.) Clearly, the problem of finding a subset consisting only of points in $\GQ(s,1)$ resolving all the remaining points is equivalent to finding a resolving set for $G_{s+1,s+1}$. This latter question has been addressed in \cite[Theorem 6.1]{CHMPPSW2007} in a more general way.

\begin{theorem}[C\'aceres et al.\ \cite{CHMPPSW2007}]\label{th:grids}

Let $G_{n,m}$ be an $n\times m$ grid, with $n\geq m\geq 1$. The metric dimension of $G_{n,m}$ is given by

\begin{equation}\label{eq:grid}
\mu(G_{n,m})=\left\{ \begin{array}{ll}
\left\lfloor \frac{2(n+m-1)}{3} \right \rfloor, & \mbox{ if } m\leq n\leq 2m-1,\\
n-1,& \mbox{ if } n\geq 2m.\\
\end{array}
\right.
\end{equation}

\end{theorem}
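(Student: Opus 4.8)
To establish Theorem~\ref{th:grids} one may argue directly in $G_{n,m}$, exploiting that every distance equals $0$, $1$ or $2$: for cells $v=(i,j)$ and $w=(a,b)$ one has $d(v,w)=1$ iff $v\neq w$ and ($a=i$ or $b=j$), and $d(v,w)=2$ otherwise. Thus a cell-set $S$ resolves $G_{n,m}$ if and only if for any two distinct cells $v,v'\notin S$ the sets $S\cap(\mathrm{row}(v)\cup\mathrm{col}(v))$ and $S\cap(\mathrm{row}(v')\cup\mathrm{col}(v'))$ are different. I would first record three elementary structural facts about a resolving set $S$: (i) at most one row and at most one column of $G_{n,m}$ meets $S$ in no cell (two empty rows make $(i,j),(i',j)$ indistinguishable, and dually); (ii) if two distinct rows each meet $S$ in exactly one cell, and these two cells lie in different columns, then at least one of those columns meets $S$ in $\ge 2$ cells (otherwise the two cells obtained by swapping the columns are indistinguishable); (iii) if $S$ has both an empty row and an empty column, then no cell of $S$ is simultaneously the unique cell of $S$ in its row and in its column (call such a cell \emph{private}); moreover, by~(ii), $S$ has at most one private cell in any case.

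For the lower bound, call a line (row or column) \emph{light} if it meets $S$ in exactly one cell and \emph{heavy} if it meets $S$ in at least two, and classify each cell of $S$ by the pair (type of its row, type of its column); note that a cell cannot lie in an empty line. Writing $n_1,n_{\ge2}$ for the numbers of light, heavy rows and $m_1,m_{\ge2}$ for the columns, counting cells in light rows (resp.\ light columns) gives $\#(\text{light},\text{light})+\#(\text{light},\text{heavy})=n_1$ and $\#(\text{light},\text{light})+\#(\text{heavy},\text{light})=m_1$, hence
\[ |S| \;=\; \#(\text{light},\text{light})+\#(\text{light},\text{heavy})+\#(\text{heavy},\text{light})+\#(\text{heavy},\text{heavy}) \;=\; n_1+m_1-\#(\text{light},\text{light})+\#(\text{heavy},\text{heavy}). \]
By~(iii), $\#(\text{light},\text{light})\le 1$, so $|S|\ge n_1+m_1-1$; and from $|S|\ge n_1+2n_{\ge2}$ together with the bound $n_1+n_{\ge2}\ge n-1$ from~(i) one gets $n_1\ge 2n-2-|S|$, and dually $m_1\ge 2m-2-|S|$. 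Combining yields $3|S|\ge 2(n+m)-5$. To upgrade this to the exact value $\lfloor 2(n+m-1)/3\rfloor$ (an improvement needed only when $n+m\equiv 1\pmod 3$), I would run the short case analysis according to whether $S$ leaves a row and/or a column empty: if no row is empty the estimate improves to $n_1\ge 2n-|S|$ (and dually for columns), while if both a row and a column are empty then~(iii) gives the stronger $\#(\text{light},\text{light})=0$; in each of the resulting cases the inequality yields $|S|\ge\lfloor 2(n+m-1)/3\rfloor$. When $n\ge 2m$, fact~(i) alone gives $|S|\ge n-1$, which is already the claimed bound.

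For the upper bound I would produce explicit resolving sets of the required sizes and verify the criterion above. When $n\ge 2m$: put one cell into each of the rows $1,\dots,n-1$, distributed among the columns so that column $m$ receives exactly one cell and columns $1,\dots,m-1$ each receive at least two (possible precisely when $n\ge 2m$); here the only empty line is row $n$, the only light column is column $m$, there is no private cell, and a quick run through the remaining cases of the criterion shows $S$ resolves $G_{n,m}$. When $m\le n\le 2m-1$: I would use a ``staircase''-type placement of $\lfloor 2(n+m-1)/3\rfloor$ cells, chosen so that at most one row and one column are empty, at most one cell is private, each heavy line carries enough companions in the right positions, and — according to the residue of $n+m-1$ modulo $3$ — the numbers of light versus doubly covered rows and columns balance out to the target total; the verification is again a finite case distinction on the types of the two outer cells.

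The main obstacle is the upper bound. The lower bound rests on the structural facts~(i)--(ii) and routine double counting, but attaining the exact optimal size forces a genuinely explicit and somewhat delicate construction, and — more importantly — checking the resolving criterion for \emph{all} pairs of outer cells, in particular pairs sharing a heavy row: unlike the light-row pairs handled by~(ii), these require each of the two relevant columns to contain a cell of $S$ outside that row, a condition not implied by ``at most one empty column'' alone. Squeezing the constant down to $2/3$ (rather than $2/3+\varepsilon$) is precisely the matter of arranging these heavy lines economically while respecting all the pairwise constraints, together with the final case split on $n+m-1\bmod 3$.
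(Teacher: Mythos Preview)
The paper does not prove this theorem at all: it is quoted verbatim from C\'aceres et~al.\ \cite{CHMPPSW2007} and used as a black box in the corollary that follows. There is therefore no ``paper's own proof'' to compare your attempt against.

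As for your sketch itself: the lower-bound framework is sound. Facts (i)--(iii) are correct, and the double count $|S|\ge n_1+m_1-1$ combined with $n_1\ge 2n-2-|S|$, $m_1\ge 2m-2-|S|$ does give $3|S|\ge 2(n+m)-5$. The case split you allude to (empty row/column present or not, and $\#(\text{light},\text{light})=0$ when both are present) is indeed what closes the gap in the residue class $n+m\equiv 1\pmod 3$, though you have not actually carried it out.

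The genuine gap is the upper bound in the range $m\le n\le 2m-1$. You describe a ``staircase-type placement'' and a ``finite case distinction'', but give no construction and no verification; you yourself flag this as the main obstacle. The difficulty you identify is real: for two outer cells sharing a heavy row, resolving them requires each of their columns to contain an $S$-cell outside that row, and arranging this while hitting the exact value $\lfloor 2(n+m-1)/3\rfloor$ is the substance of the result. Without an explicit construction and its verification, this half of the theorem remains unproved in your write-up. If you want to complete it, the original paper \cite{CHMPPSW2007} gives explicit coordinates for the resolving sets in each residue class; reproducing and checking those would close the gap.
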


Note that $\mu(G_{n,m})\geq \left\lfloor \frac{2(n+m-1)}{3} \right \rfloor$ always holds. Using this theorem we can easily deduce the metric dimension of $\GQ(s,1)$.

\begin{corollary}

The metric dimension of $\GQ(s,1)$ is $\varphi(s)$, with

\[\varphi(s)= \left\{ \begin{array}{ll}

4r+1, & \mbox{ if } s=3r,\\

4r+2, & \mbox{ if } s=3r+1,\\

4r+3, & \mbox{ if } s=3r+2.\\

\end{array}\right.\]

\end{corollary}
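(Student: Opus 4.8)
The corollary is just a matter of unwinding Theorem~\ref{th:grids} for the square grid $G_{s+1,s+1}$, and then checking that restricting the metric basis of $\GQ(s,1)$ to consist only of points costs nothing. First I would recall that the point set of $\GQ(s,1)$ is an $(s+1)\times(s+1)$ grid whose collinearity graph is exactly $G_{s+1,s+1}$: two points are collinear iff they share a row or a column. Moreover, in the incidence graph of $\GQ(s,1)$, every line is incident with $s+1\geq 2$ points, so once $\PS$ is a set of points with the property that the $G_{s+1,s+1}$-distance vectors to $\PS$ are pairwise distinct, $\PS$ resolves all points, and each line — being determined by any two of its points — is also resolved by $\PS$ (this is exactly the first half of Lemma~\cite{HT2012}, applied in the incidence graph, noting $d_{\text{inc}}(P,Q)=2$ iff $P,Q$ collinear in the grid). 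Hence a point-resolving set for the grid graph is a resolving set for the incidence graph of $\GQ(s,1)$, so $\mu(\GQ(s,1))\leq \mu(G_{s+1,s+1})$.

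For the reverse inequality, the easiest route is to observe that any resolving set $S=\PS\cup\LS$ for the incidence graph of $\GQ(s,1)$ can be converted into a point-resolving set for $G_{s+1,s+1}$ of size at most $|S|$: replace each line $\ell\in\LS$ by one of its incident points (chosen, say, to avoid collisions, or simply arbitrarily and then observe the argument still goes through because in a grid the distance from a point $P$ to a line $\ell$ in the incidence graph is controlled by the grid-distances from $P$ to the points of $\ell$). Actually it is cleaner to invoke the structure directly: distances in the incidence graph of $\GQ(s,1)$ between a point and a line, or between two lines, are all determined by the collinearity graph $G_{s+1,s+1}$ on the points, so the metric dimension of the incidence graph equals that of $G_{s+1,s+1}$ (the standard fact that the incidence graph of a partial linear space with lines of size $\geq 2$ has the same metric dimension as its point graph, once one also checks no line is ever forced into the basis). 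Either way, $\mu(\GQ(s,1))=\mu(G_{s+1,s+1})$.

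Finally I would plug $n=m=s+1$ into Theorem~\ref{th:grids}. Since $m\leq n\leq 2m-1$ always holds when $n=m$, we get $\mu(G_{s+1,s+1})=\left\lfloor\frac{2(2s+1)}{3}\right\rfloor=\left\lfloor\frac{4s+2}{3}\right\rfloor$. Writing $s=3r$, $s=3r+1$, $s=3r+2$ and evaluating the floor gives $\left\lfloor\frac{12r+2}{3}\right\rfloor=4r$? — here one must be careful: $\frac{12r+2}{3}=4r+\frac23$, so the floor is $4r$, not $4r+1$. This discrepancy signals that the correct count in the point-only case needs the sharper lower bound $\mu(G_{n,m})\geq\left\lfloor\frac{2(n+m-1)}{3}\right\rfloor$ to be combined with the observation that in $\GQ(s,1)$ one may also have to resolve the two parallel classes of lines, which adds exactly one to the requirement in the $s=3r$ case (a direct small computation, or a parity/counting argument on how many points can lie in a common row or column, pins down the extra $+1$); in the other two residue classes the floor already absorbs this. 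So I would carry out the case analysis $s\equiv 0,1,2\pmod 3$ explicitly, verifying $\varphi(3r)=4r+1$, $\varphi(3r+1)=4r+2$, $\varphi(3r+2)=4r+3$, with the $s=3r$ case being the one genuinely needing the extra argument rather than a bare appeal to the grid formula.

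**Main obstacle.** The only subtlety — and it is a real one — is the $s\equiv 0\pmod 3$ case, where the grid-graph metric dimension $4r$ is strictly smaller than the claimed $\varphi(3r)=4r+1$. Resolving this requires noting that a point-basis for $G_{s+1,s+1}$ of size $4r$ resolves all \emph{points} of $\GQ(s,1)$ but need not resolve the \emph{lines}, or else that the relevant optimum for the incidence graph is genuinely one larger in this residue class; the honest way is to reprove a matching lower bound $\mu(\GQ(s,1))\geq 4r+1$ for $s=3r$ by a short direct argument (e.g. a counting/pigeonhole argument showing a resolving set of size $4r$ forces two unresolved vertices, using that rows and columns have $s+1$ points each), and then exhibit an explicit resolving set of size $4r+1$, $4r+2$, $4r+3$ respectively. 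Everything else is a routine floor-function computation.
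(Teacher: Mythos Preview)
Your proposal contains a genuine gap in the lower-bound direction. Two of your claimed reductions are false.

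First, the assertion that a resolving set for $G_{s+1,s+1}$ automatically resolves the incidence graph of $\GQ(s,1)$ is wrong: the lemma you cite says a line meeting $\PS$ in \emph{at least two} points is resolved, not that every line is resolved. A point set $\PS$ that misses two distinct lines $\ell,\ell'$ entirely leaves $\ell$ and $\ell'$ at distance $3$ from every element of $\PS$, hence unresolved. This is exactly what forces the extra $+1$ when $s\equiv 0\pmod 3$.

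Second, your replacement argument (swap each $\ell\in\LS$ for a point on $\ell$) does not produce a resolving set for $G_{s+1,s+1}$. Concretely, take $P=(1,1)$, $Q=(2,2)$, resolved in the incidence graph by the row $\ell=h_1$ (since $P\in\ell$, $Q\notin\ell$). Replacing $\ell$ by $R=(2,1)\in\ell$ gives $d_{\mathrm{grid}}(P,R)=d_{\mathrm{grid}}(Q,R)=1$, so $R$ fails to separate $P$ and $Q$. Hence your inequality $\mu(\GQ(s,1))\geq \mu(G_{s+1,s+1})$ is unproved, and the subsequent claim that the two metric dimensions coincide is simply false for $s\equiv 0\pmod 3$.

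What the paper actually does for the lower bound is a case split on the number $\alpha+\beta$ of lines in $S$. If $\alpha+\beta>0$, the outer points form an $(s+1-\alpha)\times(s+1-\beta)$ subgrid, and Theorem~\ref{th:grids} applied to that subgrid gives $|S|\geq\bigl\lfloor\tfrac{4s+2-2(\alpha+\beta)}{3}\bigr\rfloor+\alpha+\beta\geq\varphi(s)$. If $\alpha+\beta=0$, then $S\subset\mathcal{P}$ is a resolving set for $G_{s+1,s+1}$, giving $|S|\geq\mu(G_{s+1,s+1})$, which equals $\varphi(s)$ unless $s=3r$. In that remaining case the paper invokes a structural fact from the proof of \cite[Theorem~6.1]{CHMPPSW2007}: every metric basis of $G_{3r+1,3r+1}$ leaves two lines of $\GQ(s,1)$ skew, hence cannot resolve the lines, forcing $|S|\geq 4r+1$. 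Your vague ``counting/pigeonhole'' suggestion does not substitute for this; the extra unit genuinely comes from the shape of the extremal grid bases, not from a cardinality argument. The upper bound in the paper is by explicit construction in each residue class.
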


\begin{proof}

First of all note that $\varphi(s)=\mu(G_{s+1,s+1})+1$ if $s\equiv0 \pmod{3}$ and $\varphi(s)=\mu(G_{s+1,s+1})$ otherwise. Let us construct the following resolving sets of $\GQ(s,1)$ to prove $\mu(\GQ(s,1))\leq\varphi(s)$. We identify the points of  $\GQ(s,1)$ with ordered pairs $(i,j)$, $1\leq i,j\leq s+1$, and the lines with $h_a = \{(i,a) : i \in\{1,\ldots,s+1\}\}$ and $v_a = \{(a,i) : i \in\{1,\ldots,s+1\}\}$.

Let $s=3r+t$ with $t\in\{0,1,2\}$, and consider 

\[\mathcal{S} = \{(1+3i,2+3i), (2+3i,1+3i), (1+3i,3+3i), (3+3i,1+3i)\: \ i =0,\ldots,r-1\}.\]
We define $S'$ in the following way.

\begin{enumerate}

\item If $t=0$ then $S':=\mathcal{S} \cup \{v_{s+1}\}$. 	 							%is  a resolving set for $\GQ(s,1)$ of size $4r+1$. %\left\lfloor \frac{2(3s+1+3s+1-1)}{3}\right \rfloor+1

\item If $t=1$ then $S':=\mathcal{S} \cup \{(1+3r,1+3r),(2+3r,1+3r)\}$.					%is a resolving set for $\GQ(s,1)$ of size $4r+2$.%=\left\lfloor\frac{2(3s+2+3s+2-1)}{3} \right \rfloor$.

\item If $t=2$ then $S':=\mathcal{S} \cup \{(1+3r,2+3r), (1+3r,3+3r)\}\cup\{v_{s+1}\}$.	%is  a resolving set for $\GQ(s,1)$ of size $4r+3$.%=\left\lfloor \frac{2(3s+3+3s+3-1)}{3}\right \rfloor$.

\end{enumerate}

It is easy to see that $S'$ is a resolving set for $\GQ(s,1)$ of size $4r+t+1$, so we omit the proof.

For a lower bound, suppose that a resolving set $S$ of $\GQ(s,1)$ contains $\alpha$ lines of the first parallel class and $\beta$ lines of the second one. The points of $\GQ(s,1)$ not contained in the $\alpha+\beta$ lines form a $(s+1-\alpha)\times(s+1-\beta)$ grid. In order to resolve this set of points, we need at least $\mu(G_{s+1-\alpha,s+1-\beta})\geq\left\lfloor \frac{2(s+1-\alpha+s+1-\beta-1)}{3} \right \rfloor=\left\lfloor \frac{4s+2 - 2(\alpha+\beta)}{3} \right \rfloor$ points, by Theorem \ref{th:grids}. Thus $|S|\geq \left\lfloor \frac{4s+2 - 2(\alpha+\beta)}{3} \right \rfloor+\alpha+\beta$, which is easily seen to be smaller than $\varphi(s)$ if and only if $\alpha+\beta=0$. Thus the assertions hold if $\alpha+\beta>0$. If $\alpha+\beta=0$ then $S$ consists entirely of points, thus corresponds to a resolving set for $G_{s+1,s+1}$, and so $|S|\geq \mu(G_{s+1,s+1})$. This shows that for $t=1,2$, $|S|\geq \varphi(s)$. Concerning the case $t=0$, note that $S$ resolves also the lines of $\GQ(s,1)$, so it cannot have two skew lines. Following the proof of \cite[Theorem 6.1]{CHMPPSW2007}, if $s=3r$, all resolving sets for $G_{s+1,s+1}$ of size $\mu(G_{s+1,s+1})=\varphi(s)-1$ have two skew lines, and thus do not resolve the lines of $\GQ(s,1)$. Therefore, $|S|\geq\varphi(s)$ holds in this case as well.

\end{proof}

\subsection{Resolving sets for $\GQ(q,q)$}

In this subsection an important class of generalized quadrangles is considered. This class contains two of the five classical quadrangles. We give a general lower bound first. If two points $P$ and $Q$ are adjacent, we write $P\sim Q$ and $P\not\sim Q$ otherwise.

\begin{proposition}\label{GQlb}

The metric dimension of any $\GQ(q,q)$ is at least $\max\{6q-27,4q-7\}$.

\end{proposition}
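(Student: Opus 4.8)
The plan is to mimic the incidence‑counting arguments used for projective and affine planes, adapting Proposition~\ref{resdef-p} and Lemma~\ref{4q-4}-style reasoning to the GQ setting via axioms (GQ1)--(GQ3). First I would set up the analogue of the resolving criterion: since the incidence graph of a $\GQ(q,q)$ has diameter $4$, two outer points $P,P'$ are resolved by $S=\PS\cup\LS$ precisely when their ``distance vectors'' to $S$ differ, and the only way two outer points can fail to be separated is if $[P]\cap\LS=[P']\cap\LS$ \emph{and} they have the same pattern of distance‑$4$ relations to $\PS$; but because any two non‑collinear points have a common neighbour, the decisive data are $[P]\cap\LS$ and the set of inner points collinear with $P$. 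Dually for outer lines. So I would first record the necessary conditions: (i) at most one outer point is uncovered by $\LS$ (else two uncovered, non‑collinear outer points sharing no inner neighbour would be unresolved), (ii) on each inner line at most one outer point is $1$‑covered, and the dual statements (i$'$), (ii$'$) for lines; plus the extra GQ‑specific conditions coming from the ``no two outer points with the same inner‑neighbour set'' requirement.

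Next, with these conditions in hand, I would run a double‑counting argument on the pair set $\Gamma=\{(\ell,P): \ell\in\LS,\ P\in[\ell],\ |[P]\cap\LS|\ge 2\}$ exactly as in Proposition~\ref{Lbig}, but now using that each point lies on $q+1$ lines and each line has $q+1$ points, and that the total number of points is $v=(q+1)(q^2+1)$. The uncovered‑point bound (i) says all but at most one outer point lies on an inner line or is $1$‑covered (which is controlled by (ii) through inner lines), so counting incidences between $\LS$ and the outer points gives a linear lower bound $|\LS|\gtrsim 2q$ up to additive constants, and dually $|\PS|\gtrsim 2q$, yielding $|S|\ge 4q-O(1)$; tightening the constants carefully should give the $4q-7$ branch. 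For the $6q-27$ branch I expect one needs a sharper local argument: look at the lines through a fixed inner point or the points on a fixed inner line, and exploit condition (ii)/(ii$'$) together with the fact that through a non‑incident point‑line pair there is a \emph{unique} connecting path, which forces many incidences among $S$ and gives a factor‑$\tfrac32$ improvement in the density estimate when $q$ is not too small; the threshold where $6q-27$ overtakes $4q-7$ is $q$ around $10$, consistent with needing $q$ largish for the stronger bound.

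The main obstacle will be establishing the \emph{correct} resolving criterion for $\GQ(q,q)$ — in a diameter‑$4$ graph the distance‑$4$ equivalence classes are genuinely more subtle than in a plane, and one must verify that no two distinct outer points (or lines) are forced to share \emph{both} their set of inner neighbours at distance $2$ \emph{and} their coarser distance‑$4$ behaviour; getting the conditions tight enough (rather than merely sufficient) is what drives the size of the final constants. A secondary technical point is bookkeeping the ``mixed'' contributions — outer points that are neither uncovered nor on an inner line but are $1$‑covered through an outer line — and ensuring the double count of $\Gamma$ does not lose more than the claimed constants. Once the criterion and the counting are pinned down, the two bounds $6q-27$ and $4q-7$ should fall out by taking the better of the ``sparse'' regime estimate and the ``dense/local'' estimate, and $\max\{6q-27,4q-7\}$ is then immediate.
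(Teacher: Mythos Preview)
Your proposed necessary condition (i), ``at most one outer point is uncovered by $\LS$'', is not valid for generalized quadrangles, and this undermines the whole plan. In a $\GQ(q,q)$ the incidence graph has diameter $4$, so two uncovered outer points $P,P'$ both lie at distance $3$ from every line of $\LS$, but they may still be resolved by $\PS$: any inner point $Q$ collinear with $P$ but not with $P'$ gives $d(P,Q)=2\neq4=d(P',Q)$. Thus a resolving set can leave \emph{many} outer points uncovered, provided their collinearity patterns with $\PS$ are pairwise distinct. Indeed, if (i) held, then running your double count $\Gamma=\{(\ell,P):\ell\in\LS,\ P\in[\ell],\ |[P]\cap\LS|\ge2\}$ in a structure with $(q+1)(q^2+1)\sim q^3$ points would force $|\LS|\gtrsim 2q^2$, contradicting the upper bound $\mu(W(q))\le 8q$ established later in this section. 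So the analogue of conditions (P1)--(P2') simply does not carry over, and the counting you borrow from Proposition~\ref{Lbig} cannot be the engine here.

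What the paper actually does is quite different. It lets $\NP$ be the set of uncovered outer points, notes that these are pairwise distinguished solely by which inner points they are collinear with, and hence that at most $\binom{k}{i}$ points of $\NP$ can be collinear with exactly $i$ inner points (where $k=|\PS|$). It then double-counts pairs $(P,Q)$ with $P\in\NP$, $Q\in\PS$, $P\sim Q$: the upper bound comes from the fact that each $Q$ is collinear with at most $q(q+1)$ points, while the lower bound uses $|\NP|\ge (q+1)(q^2+1)-k-m(q+1)$ together with the binomial control on the small-$i$ strata. Stripping off $C_0\cup C_1$ and counting with multiplier $2$ gives the $4q-7$ bound; stripping off $C_0\cup C_1\cup C_2$ with multiplier $3$ and a slightly sharper estimate on $|\NP(Q)|$ gives $6q-27$. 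The key idea you are missing is this pigeonhole bound $|C_i|\le\binom{k}{i}$ on collinearity patterns, which replaces the ``at most one uncovered point'' condition from the diameter-$3$ setting.
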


\begin{proof}

Let $\Pi$ be a $\GQ(q,q)$, $S=\Ps \cup \Ls$ be a resolving set for $\Pi$, $\NP$ denote the set of the uncovered outer points and let $|\Ps |=k$ and $|\Ls| = m$. By duality, we may assume that $m\geq k$, and hence $|S| \geq 2k$. As $\LS$ covers at most $m(q+1)$ points, $|\NP|\geq q^3+q^2+q+1-k-m(q+1)$. The distance between any point of $\NP$ and any line of $\Ls $ is $3$, so the elements of $\NP$ must be distinguished by their distances from the elements of $\Ps$. The distance of two points is either $2$ or $4$, according to whether the points are collinear or not. Let $C_i$ be the set of points of $\NP$ that are collinear with exactly $i$ inner points. As $|\Ps|=k$, it follows from the pigeonhole principle that $|C_i|\leq \binom{k}{i}$. 
First, let $\Gamma:=\{(P,Q)\colon P\in \NP\setminus(C_0\cup C_1), Q\in \Ps, P\sim Q\}$, and let us count $|\Gamma|$ in two different ways. On the one hand, we may choose $P$ arbitrarily from $\NP\setminus(C_0\cup C_1)$, and then we find at least two admissible pairs for $P$; thus 

\begin{equation}\label{est1}
\begin{aligned}
|\Gamma|&\geq \left( q^3+q^2+q+1-k-m(q+1) - |C_0|-|C_1| \right)\cdot2\\ &\geq 2(q^3+q^2+q)-2m(q+1) -2k-2|C_1|.
\end{aligned}
\end{equation}
On the other hand, each point $Q\in\PS$ is collinear with at most $(q+1)q$ points of $\NP$. Summing up these for all $Q\in\PS$, we obtain $k(q+1)q$, but we have also counted the points of $C_i$ exactly $i$ times, hence we get
\begin{equation}\label{est2}
|\Gamma|\leq k(q+1)q-|C_1|.
\end{equation}
Combining \eqref{est1} with \eqref{est2} and using $|C_1|\leq k$ and $m=|S|-k$ we obtain
\[k(q^2-q+1)+2|S|(q+1)-2(q^3+q^2+q)\geq 0.\]
Using $|S|\geq 2k$, one may deduce
\[|S|\geq \frac{4q(q^2+q+1)}{q^2+3q+5}=4q-7-\frac{q^2-5q-35}{q^2+3q+5} > 4q-8.\]

Next we modify and refine the above calculations to derive $|S|\geq 6q-27$. Let now $\Gamma:=\{(P,Q)\colon P\in \NP\setminus{(C_0\cup C_1\cup C_2)}, Q\in \Ps, P\sim Q\}$. Similarly as we obtained \eqref{est1}, we get

\begin{equation*}%\label{est3}
|\Gamma|\geq \left( q^3+q^2+q+1-k-m(q+1) - |C_0|-|C_1|-|C_2| \right)\cdot3.
\end{equation*}

For a point $Q\in \Ps$, let $\NP(Q)$ be the number of points of $\NP$ adjacent to $Q$, and denote by $h$ the number of inner lines through $Q$. Then $|\NP(Q)|\leq (q+1-h)q$. If $h\geq 1$ then $|\NP(Q)|\leq q^2$. If $h=0$ then each line of $\Ls$ contains a point adjacent to $Q$, and that point is covered by at most $q$ inner lines, thus $|\NP(Q)|\leq (q+1)q-m/q$. If $m\geq q^2$ then $|S|\geq q^2+1\geq 6q-27$, so we are done; thus we may assume $m<q^2$, in which case $|\NP(Q)|\leq (q+1)q-m/q$ holds. Summing up these estimates for all points of $\PS$ we get $|\Gamma|\leq k(q+1)q-km/q$. In this estimate we have counted each point of $C_i$ exactly $i$ times, thus 

\begin{equation*}%\label{est4}
|\Gamma|\leq k(q+1)q-km/q - |C_1|-2|C_2|
\end{equation*}

also holds. Rearranging, substituting $m=|S|-k$ and $|C_i|\leq \binom{k}{i}$, the two estimates together give

\begin{equation*}
\left(\frac12 + \frac1q\right)k^2-\frac{k|S|}{q}+\left( q^2-2q+\frac32\right)k + 3|S|(q+1)-3(q^3+q^2+q)\geq 0.
\end{equation*}

Suppose now to the contrary that $|S|\leq 6q-28$. Then, using $2k\leq |S|\leq 6q-28$ we obtain

\begin{equation*}%\label{est5}
f(q,k):=\left(\frac12 - \frac1q\right)k^2+\left( q^2-2q+\frac32\right)k + 3(6q-28)(q+1)-3(q^3+q^2+q)\geq 0.
\end{equation*}
Recall that $0\leq k\leq |S|/2$. Thus, to get a contradiction, it is enough to see that $f(q,0)$ and $f(q,3q-14)$ are both negative by the convexity of $f(q,k)$ in $k$. As $f(q,0)=-3q^3+15q^2-69q-84<0$ and $f(q,3q-14)=-q^2/2-175q/2-196/q+77< 0$, we conclude that $|S|\geq 6q-27$ must hold.

\end{proof}

There are two known infinite series of classical generalized quadrangles of order $(q,q)$. To make the paper self-contained, we recall their brief descriptions.

In $\mathrm{PG}(3,q)$, all points of the space and the self-conjugate lines of a null polarity with the incidence inherited from $\mathrm{PG}(3,q)$ form a generalized quadrangle of order $(q,q)$.

This quadrangle is denoted by $\mathcal{W}(q)$. The lines of $\mathcal{W}(q)$ are lines in $\mathrm{PG}(3,q)$, hence each of them contains $q+1$ points, so axiom (GQ2) is satisfied with $s=q$.

If $\alpha $ is a null polarity of $\mathrm{PG}(3,q)$ then each point is autoconjugate. Let $P$ be an arbitrary point. If $\ell $ is a line through the points $P$ and $R$ then $\ell ^{\alpha }$ is the intersection of the planes $P^{\alpha }$ and $R^{\alpha }$. Hence $\ell $ is self-conjugate if and only if $P\in R^{\alpha }$ (and $R\in P^{\alpha }$).

As $P\in P^{\alpha }$ also holds, this means that the self-conjugate lines through $P$ are the lines of the pencil with carrier $P$ in the plane $P^{\alpha }$. Each pencil contains $q+1$ lines hence $\mathcal{W}(q)$ satisfies axiom (GQ1) with $t=q$.

Let $(P,\ell )$ be a non-incident point-line pair in $\mathcal{W}(q)$. Then $\ell $ is a self-conjugate line of $\alpha $ but it does not contain $P$, hence $\ell $ is not contained in the plane $P^{\alpha}$.  

So $\ell $ intersects $P^{\alpha }$ in a unique point, say $P'$ in $\mathrm{PG}(3,q)$. Then $PP'$ is the unique self-conjugate line through $P$ which meets $\ell $, hence axiom (GQ3) is satisfied by $\mathcal{W}(q)$.

Let $\mathcal{Q}$ be a non-singular quadric in $\mathrm{PG}(4,q)$.  Then the points of $\mathcal{Q}$ and the lines contained in $\mathcal{Q}$ with the incidence inherited from $\mathrm{PG}(4,q)$ form a generalized quadrangle of order $(q,q)$. This quadrangle is denoted by $\mathcal{Q}(4,q)$. It is well-known that $Q(4,q)$ is isomorphic to the dual of $W(q)$ and, if $q$ is even then $W(q)$ is self-dual. Hence, from the graph theoretic point of view, the incidence graphs of $W(q)$ and $Q(4,q)$ are always isomorphic.

\begin{definition}

A triple $(x,y,z)$ of points is called a \emph{triad} if no two of them are collinear. A point $u$ is a \emph{center} of a triad if $u$ is collinear with each of the three points of the triad. 
\end{definition}

By 1.3.6.\ (iii) and 3.3.1. of \cite{PT}, we have the following.

\begin{proposition}\label{triad}

If $q$ is odd then every triad of $Q(4,q)$ has either 0 or 2 centers.

\end{proposition}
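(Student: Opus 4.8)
The plan is to reduce the statement to the structure theory of generalized quadrangles as developed in [PT]. Recall that the set of centers of a triad $(x,y,z)$ is exactly $\{x,y,z\}^{\perp}$ in the perp-notation of [PT], and that a point $p$ of a GQ of order $(s,s)$ is \emph{antiregular} precisely when every triad containing $p$ has exactly $0$ or $2$ centers (this is the content of $1.3.6$ (iii)). Theorem $3.3.1$ of [PT] asserts that for $q$ odd every point of $Q(4,q)$ is antiregular. So I would argue: given an arbitrary triad $(x,y,z)$ of $Q(4,q)$ with $q$ odd, the point $x$ is antiregular, and since $(x,y,z)$ is a triad containing $x$, the characterization $1.3.6$ (iii) immediately gives that it has $0$ or $2$ centers. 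This finishes the proof; the only bookkeeping needed is to confirm that ``center'' as defined in the excerpt coincides with membership in $\{x,y,z\}^{\perp}$.

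For a self-contained argument one can instead compute directly in the quadric model. Realize $Q(4,q)$ as a non-degenerate quadric $\mathcal{Q}\subset\mathrm{PG}(4,q)$ with quadratic form $Q$ and polar bilinear form $b$; for $q$ odd $b$ is non-degenerate, and two points $u,v\in\mathcal{Q}$ are collinear in the GQ iff $b(u,v)=0$. Given a triad $(x,y,z)$, the three points are linearly independent in $\mathrm{PG}(4,q)$ (three collinear points of $\mathrm{PG}(4,q)$ on $\mathcal{Q}$ would span a line inside $\mathcal{Q}$), so $W=\langle x,y,z\rangle$ is a plane and $\ell:=W^{\perp}$ is a line, and the centers of the triad are exactly the points of $\ell\cap\mathcal{Q}$. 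The restriction $Q|_{\ell}$ is a binary quadratic form, hence has $0$, $1$, $2$, or $q+1$ zeros over $\mathrm{GF}(q)$; I would rule out the last two cases. If $Q|_{\ell}\equiv 0$ then $\ell$ is totally isotropic, so $\ell\subseteq\ell^{\perp}=W$, forcing $W\cap\mathcal{Q}$ to contain a line and hence to split into two GQ-lines; then two of $x,y,z$ share a GQ-line, contradicting that they form a triad. If $Q|_{\ell}$ has a unique (double) zero $u$, then $u\perp\ell$, i.e.\ $\ell\subseteq u^{\perp}$, whence $u\in W^{\perp\perp}=W$; writing $u=\lambda x+\mu y+\nu z$ and expanding $b(u,x)=b(u,y)=b(u,z)=0$ together with $b(x,x)=b(y,y)=b(z,z)=0$ yields a homogeneous $3\times 3$ linear system in $\lambda,\mu,\nu$ whose only solution, using $b(x,y),b(y,z),b(x,z)\neq 0$ and $\mathrm{char}\neq 2$, is $\lambda=\mu=\nu=0$, a contradiction. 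Hence $\ell\cap\mathcal{Q}$ has $0$ or $2$ points.

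The main obstacle in the direct approach is exactly the elimination of the tangent (``one center'') case, and that is where the hypothesis that $q$ is odd is indispensable: the factor $2$ that collapses the $3\times 3$ system vanishes in characteristic $2$, and indeed for $q$ even triads with exactly one center do occur (so the statement genuinely fails there). In the citation approach there is essentially no obstacle beyond checking that the terminology matches, since Theorem $3.3.1$ and $1.3.6$ (iii) of [PT] carry all the weight.
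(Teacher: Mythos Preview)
Your citation approach is exactly what the paper does: the paper simply records that the proposition follows from 1.3.6~(iii) and 3.3.1 of \cite{PT}, with no further argument. Your self-contained quadric computation is a correct bonus not present in the paper; the determinant $2\,b(x,y)\,b(y,z)\,b(x,z)$ of the $3\times3$ system cleanly isolates where odd characteristic is needed.
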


We proceed by constructing small resolving sets for $W(q)$ as the union of two semi-resolving sets.

\begin{proposition}\label{srsWqpt}

There exists a semi-resolving set of size $4q$ for the points of $W(q)$.

\end{proposition}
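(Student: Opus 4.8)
I want to build a set $\cT$ of $4q$ points of $W(q)$ that is a \emph{semi-resolving set for the points}, i.e.\ any two outer points of $W(q)$ get distinct distance-vectors to $\cT$. Since all distances between points are $2$ or $4$, this amounts to: for any two outer points $P\neq P'$, there is a point in $\cT$ collinear with exactly one of them. The natural way to get such a set cheaply in $W(q)$ is to exploit its model in $\PG(3,q)$: fix a point $V$, look at the plane $V^\alpha$ (the polar plane), and the pencil of $q+1$ self-conjugate lines through $V$ in that plane. Take a few of these lines (or rather a carefully chosen set of points on and off them) so that their union of point-neighbourhoods separates all points of the quadrangle. The rough size budget $4q$ suggests taking essentially four "line's worth" of points, e.g.\ the points of a suitable grid-like substructure: pick two concurrent lines $\ell_1,\ell_2$ through $V$, two more lines $m_1,m_2$ through another point, and let $\cT$ be (most of) $[\ell_1]\cup[\ell_2]\cup[m_1]\cup[m_2]$, arranged so that the missing points are recovered by the resolving condition.

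\textbf{Key steps.} First I would set up coordinates/notation in $\PG(3,q)$ for $W(q)$ via the null polarity $\alpha$, recalling that the self-conjugate lines through a point $P$ form the pencil in $P^\alpha$ with carrier $P$, and that two points $P,R$ are collinear in $W(q)$ iff $R\in P^\alpha$. Second, I would choose the configuration of $4q$ points explicitly — most plausibly as the point sets of three or four lines through a common point (a "fan") minus a bounded number of points, so that the total is exactly $4q$; the lines should be chosen so every line of $W(q)$ meets the configuration, and so that the neighbourhoods of the chosen points already separate all point pairs. Third, the verification: given outer points $P\neq P'$, I distinguish cases according to whether $P\sim P'$ or not. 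If $P\sim P'$, they lie on a common line $n$ of $W(q)$, and I argue that among the $4q$ points there is one collinear with exactly one of $P,P'$ (using that the lines of $W(q)$ through $P$ and through $P'$ differ, together with the covering property of the configuration). If $P\not\sim P'$, I use axiom (GQ3): the neighbourhoods $P^\alpha\cap\cT$ and $(P')^\alpha\cap\cT$ can be shown to differ because the configuration is large enough to hit the symmetric difference of the two polar planes. Finally I would count to confirm $|\cT|=4q$.

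\textbf{The main obstacle.} The delicate part is choosing the $4q$ points so that \emph{both} the "collinear pair" and the "non-collinear pair" separation conditions hold simultaneously while keeping the size exactly $4q$ and not, say, $4q+O(1)$ or $5q$. Getting the covering condition (every line of $W(q)$ meets $\cT$, so that outer points always see some inner points) for free from a union of a few concurrent lines requires that those lines form a "blocking"-type configuration in the quadrangle, which forces them to be chosen through a common point with their neighbourhoods covering all lines — this is exactly the kind of structure that costs about $4q$ and not less. I expect the bulk of the real work to be the case analysis in step three showing that two points with the same distance-vector to $\cT$ must coincide; in the write-up this is the step one would "omit the (routine but lengthy) verification" or present as a short sequence of incidence arguments in $\PG(3,q)$. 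Everything else — coordinates, counting, invoking (GQ1)–(GQ3) — is bookkeeping.
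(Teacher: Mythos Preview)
Your plan has a concrete obstruction that the paper's proof sidesteps by a genuinely different choice of configuration. If you take a fan of lines $\ell_1,\dots,\ell_k$ through a common point $V$, then all of them lie in the plane $V^\alpha$. For any outer point $C$ with $C\not\sim V$, the plane $C^\alpha$ meets $V^\alpha$ in a line $m$ not through $V$, and the points of $\cT$ collinear with $C$ are exactly the points $\ell_i\cap m$; that is, the distance pattern of $C$ to the whole fan is determined by $m$ alone. But every point $C'$ on the line $m^\alpha$ (a line of $\PG(3,q)$ through $V$, not totally isotropic) other than $V$ itself satisfies $(C')^\alpha\cap V^\alpha=m$ as well, so all $q$ of them have identical distance patterns to the fan. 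Adding a second fan through another point $W$ still leaves an analogous one-parameter family of unresolved points (those on the secant line $VW$), so you would need further ad hoc repairs and it is not clear the budget stays at $4q$.

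The paper's construction avoids this by taking the four lines pairwise \emph{skew} rather than concurrent. Three pairwise skew lines $a_1,a_2,a_3$ of $W(q)$ span a hyperbolic quadric $\mathcal{H}$ in $\PG(3,q)$; for an outer point $C$ the three points $A_i=a_i\cap C^\alpha$ are usually non-collinear, and then $A_1^\alpha\cap A_2^\alpha\cap A_3^\alpha=\{C\}$ resolves $C$. The only degenerate case is when $A_1,A_2,A_3$ are collinear on some line $e$, but then $e^\alpha$ meets all three $a_i$ and hence lies on $\mathcal{H}$; choosing the fourth line $a_4$ to be disjoint from $\mathcal{H}$ guarantees that $A_4=a_4\cap C^\alpha$ separates the points of $e^\alpha$. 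Finally one drops one point from each $a_i$ to reach size $4q$. The hyperbolic-quadric trick is the idea your outline is missing.
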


\begin{proof}

Embed $W(q)$ into $\mathrm{PG}(3,q)$, let $\pi $ be the null-polarity whose self-conjugate lines are the lines of $W(q)$. First we construct a semi-resolving set for the points of $W(q)$. This construction does not depend on the parity of $q$. Let $a_1,\, a_2$ and $a_3$ be three pairwise skew lines of $W(q)$. These lines define a hyperbolic quadric $\mathcal{H}$ in $\mathrm{PG}(3,q)$. Let $a_4$ be a line of $W(q)$ which has empty intersection with $\mathcal{H}$ (an easy calculations shows the existence of such a line). We claim that the set $\Ps =[a_1]\cup [a_2]\cup [a_3] \cup [a_4]$ is a semi-resolving set of size $4q+4$ for the points of $W(q)$.

Let $C$ be an outer point. Then the plane $C^{\pi}$ contains none of the lines $a_1,\, a_2,\, a_3$ and $a_4$, because these are autoconjugate lines. For $i=1,2,3,4$ let $A_i=C^{\pi}\cap a_i$. As $W(q)$ does not contain any triangle, this means that $d(C,A_i)=2$ and $d(C,P)=4$ for all $P\in \Ps \setminus \{A_1,A_2,A_3,A_4\}$. First, suppose that $A_1,A_2$ and $A_3$ are not collinear in $\mathrm{PG}(3,q)$. Then the intersection of the three planes $A_1^{\pi}$, $A_2^{\pi}$ and $A_3^{\pi}$ in $\mathrm{PG}(3,q)$ is the single point $C$, hence $C$ is resolved in this case. If $A_1,A_2$ and $A_3$ are collinear in $\mathrm{PG}(3,q)$ then let $e$ denote the line joining them. Now the intersection of the three planes $A_1^{\pi}$, $A_2^{\pi}$ and $A_3^{\pi}$ in $\mathrm{PG}(3,q)$ is the line $e^{\pi}$. As $a_i\subset A_1^{\pi}$, the lines $e^{\pi}$ and  $a_i$ are coplanar, hence $e^{\pi}\cap a_i\neq \emptyset $ for $i=1,2,3$. The lines $a_1,\, a_2$ and $a_3$ are pairwise skew, hence $\mathcal{H}$ contains three distinct points of $e^{\pi}$, which means that the whole line $e^{\pi}$ is contained in $\mathcal{H}$. As $a_4$ has empty intersection with

$\mathcal{H}$, it has empty intersection with $e^{\pi}$, too. This means that if $E$ is any point on $e^{\pi}$ then the plane $E^{\pi}$ does not contain the line $a_4$, so $E^{\pi}\cap a_4$ is a unique point. Hence

the points of $e^{\pi}$ are also resolved by $\Ps$.

It is easy to see that if we delete one point from each of the lines $a_1,\, a_2,\, a_3$ and $a_4$ then the remaining $4q$ points also form a semi-resolving set for the points of $W(q)$.

\end{proof}

\begin{corollary}

If $q$ is even then the metric dimension of $W(q)$ is at most $8q$.

\end{corollary}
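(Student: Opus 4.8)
The plan is to build a resolving set of $W(q)$ as the disjoint union of a semi-resolving set for its points and a semi-resolving set for its lines. Proposition~\ref{srsWqpt} already supplies a set $\Ps$ of $4q$ points that resolves every point of $W(q)$; since $q$ is even, $W(q)$ is self-dual, so the dual of Proposition~\ref{srsWqpt} (equivalently, Proposition~\ref{srsWqpt} applied to $Q(4,q)\cong W(q)$ and transported back along the self-duality) yields a set $\Ls$ of $4q$ lines that resolves every line of $W(q)$. I would then set $S=\Ps\cup\Ls$, so that $|S|=8q$ (points and lines lie in different colour classes of the incidence graph, so the two parts cannot overlap), and verify that $S$ is a resolving set; this gives $\mu(W(q))\le 8q$.

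For the verification, recall that in the incidence graph of a generalized quadrangle two points are at distance $2$ or $4$, two lines at distance $2$ or $4$, and a point and a line at distance $1$ or $3$. Hence the list of distances to $\Ps$ of any outer point consists of even numbers only, while that of any outer line consists of odd numbers only; since $\Ps\neq\emptyset$, no outer point can have the same distance list as an outer line. Two distinct outer points are already separated by $\Ps$ because $\Ps$ is a semi-resolving set for the points, and dually two distinct outer lines are separated by $\Ls$. Finally, every vertex of $S$ is trivially resolved, being the unique vertex at distance $0$ from itself, so an inner vertex is never confused with any other vertex. Therefore all vertices of $W(q)$ receive pairwise distinct distance lists and $S$ is a resolving set.

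There is essentially no obstacle here: the substantive content is Proposition~\ref{srsWqpt}, which is already established, and the hypothesis ``$q$ even'' enters only in producing $\Ls$ through self-duality (for odd $q$ one would instead need a direct semi-resolving set for the points of $Q(4,q)$, which is presumably why Proposition~\ref{triad} is recorded for later use). The single point that should be stated carefully is the parity observation above, since it is exactly what keeps an outer point and an outer line apart even though both parts of $S$ contribute to distinguishing them.
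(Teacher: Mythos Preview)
Your proof is correct and follows the same approach as the paper: use the $4q$-point semi-resolving set from Proposition~\ref{srsWqpt}, dualize it via the self-duality of $W(q)$ for even $q$ to obtain a $4q$-line semi-resolving set, and take the union. The paper's proof is terser, asserting without further comment that the union is a resolving set; your explicit parity argument separating outer points from outer lines is a welcome addition but not a different idea.
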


\begin{proof}

If $q$ is even then $W(q)$ is self-dual, hence the dual of a semi-resolving set for the points is a semi-resolving set for the lines. Thus the union of the semi-resolving set constructed in the previous proposition and its dual is a resolving set for $W(q)$ and its size is $8q$.

\end{proof}

\begin{proposition}\label{srsWqln}

If $q$ is odd then there is a semi-resolving set of size $5q-4$ for the lines of $W(q)$, which contains exactly $q-3$ points, all incident with the same line.

\end{proposition}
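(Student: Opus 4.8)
The plan is to build a semi-resolving set for the lines of $W(q)$ by dualizing, as much as possible, the point-construction of Proposition~\ref{srsWqpt}, but this time we are allowed to mix in a few points (the $q-3$ collinear points) to handle the cases that the pure line-dual fails to resolve. Concretely, I would start with a small configuration of lines of $W(q)$ — say $b_1,b_2,b_3$ pairwise skew together with one further line $b_4$ chosen generically — which, via the null polarity $\pi$, dualizes to the kind of configuration used for points; the first task is to verify that the ``line-dual'' of a semi-resolving set for points is almost a semi-resolving set for lines. The obstruction in the dual is exactly the situation analysed in Proposition~\ref{srsWqpt}: two outer lines $\ell,\ell'$ with the same trace on $[b_1]\cup[b_2]\cup[b_3]\cup[b_4]$. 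Working through the $\pi$-images, the unresolved lines should form a controlled family — in the dual picture, the lines meeting a fixed line $e$ coming from three ``collinear'' intersection points — so that the remaining ambiguity lives in a single pencil or a single line's worth of lines.

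The second step is to kill that residual ambiguity with the $q-3$ points. I would fix a suitable line $\ell_0$ of $W(q)$ — the line that carries all $q-3$ points — chosen so that $[\ell_0]$ meets the unresolved family transversally: every pair of not-yet-resolved lines should be separated by whether it passes through a given point of $\ell_0$. Since an outer line of $W(q)$ meets $\ell_0$ in at most one point, a point $P\in[\ell_0]$ is at distance $2$ from exactly those outer lines through $P$ and at distance $4$ from the rest; so adding a set $\mathcal{T}\subset[\ell_0]$ of $q-3$ points distinguishes two outer lines $\ell\neq\ell'$ unless $\ell\cap\ell_0=\ell'\cap\ell_0$ lies in one of the three omitted points of $\ell_0$, or both $\ell,\ell'$ miss $\ell_0$. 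I would arrange $\ell_0$ and the three omitted points so that, combined with the line-part of the set, no surviving pair of unresolved lines falls into these exceptional buckets — this is where Proposition~\ref{triad} (every triad has $0$ or $2$ centers, $q$ odd) enters, to control how outer lines cluster around the chosen configuration and to guarantee the exceptional buckets are already handled by the $[b_i]$-part.

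The size bookkeeping is then routine: the line-part contributes roughly $4q$ lines (four lines of $q+1$ points each, minus the overlaps/deletions analogous to the ``delete one point from each'' trick in Proposition~\ref{srsWqpt}), and forcing the total line-count down to about $4q-1$ plus the $q-3$ points gives $5q-4$; I would tune the exact deletions so the count lands on the nose. The main obstacle I anticipate is the middle step — precisely pinning down the family of lines left unresolved by the pure line-dual and proving it is thin enough that a single line's worth of points (minus three) suffices. Getting the three omitted points of $\ell_0$ and the generic line $b_4$ simultaneously into ``general position'' relative to the quadrics $\mathcal{H}$ and the polarity, so that all exceptional cases collapse, will require a careful case analysis of collinearity in $\mathrm{PG}(3,q)$ much like the $e^\pi\subset\mathcal{H}$ argument already used, and that is where most of the real work lies.
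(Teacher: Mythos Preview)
Your plan diverges substantially from the paper's argument, and the part you yourself flag as ``the main obstacle'' is precisely where the paper takes a completely different route. The paper does \emph{not} dualize the skew-lines construction of Proposition~\ref{srsWqpt}. Instead it passes to $Q(4,q)$ (the dual of $W(q)$) and builds a semi-resolving set for the \emph{points} of $Q(4,q)$ from a pencil: fix a point $U$ with lines $\ell_0,\ldots,\ell_q$ through it, pick $W\neq U$ on $\ell_0$ and another line $\ell$ through $W$, and take $\PS=[\ell_0]\cup[\ell_1]\cup[\ell_2]\cup[\ell]\setminus\{U,W\}$ together with $\LS=\{\ell_4,\ldots,\ell_q\}$. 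The $q-3$ lines $\ell_4,\ldots,\ell_q$ through $U$ dualize to the $q-3$ collinear points in $W(q)$; the $4q-1$ points of $\PS$ dualize to the line-part. The heart of the proof is a counting bijection: there are exactly $q^3$ triads $(x_0,x_1,x_2)$ with $x_i\in\ell_i$, each has $U$ as one center and (by Proposition~\ref{triad}, $q$ odd) exactly one further center not collinear with $U$; since there are also exactly $q^3$ points $T\not\sim U$, every such $T$ is pinned down by its trace on $\ell_0,\ell_1,\ell_2$. Points $T\sim U$ are handled by $\LS$ (which identifies the line $\ell_i\ni T$) together with the trace on $\ell$.

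Your proposal, by contrast, starts from three \emph{pairwise skew} lines $b_1,b_2,b_3$ and tries to control the unresolved lines via the hyperbolic-quadric analysis of Proposition~\ref{srsWqpt}. There are two concrete problems. First, for $q$ odd, $W(q)$ is not self-dual, so ``dualizing'' the $W(q)$-specific construction of Proposition~\ref{srsWqpt} does not automatically yield a statement about lines of $W(q)$; you would have to redo the argument inside $Q(4,q)$, where the ambient $\PG(3,q)$/null-polarity picture is gone. Second, even granting that, you never identify the residual unresolved family concretely, nor explain why a single line's worth of $q-3$ points suffices to kill it; your invocation of Proposition~\ref{triad} is decorative rather than structural. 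In the paper's argument the triad property is not used to rule out exceptional cases in a general-position analysis --- it is the entire mechanism, via the exact $q^3=q^3$ count, that makes three concurrent lines (not three skew lines) resolve all of $\{T:T\not\sim U\}$. Without an analogous exact structural fact, your ``tune the deletions so the count lands on the nose'' step has no content.
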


\begin{proof}

We may look for a semi-resolving set for the points of $Q(4,q)$ instead. Let $U$ be an arbitrary point, and suppose that $\ell_0,\ell_1,\ldots,\ell_q$ are the $q+1$ lines incident with $U$. Let $U\neq W\in\ell_0$, let $\ell$ be any line through $W$ different from $\ell_0$. We claim that $\Ps=[\ell_0]\cup[\ell_1]\cup[\ell_2]\cup[\ell]\setminus\{U,W\}$ and $\Ls=\{\ell_4,\ldots,\ell_q\}$ form a semi-resolving set $S$ for the points. $U$ and $W$ are clearly resolved by their distances from the points of $\ell_1$ and $\ell$. Let $T\notin\Ps$, $T\neq U,W$. 

If $T\sim U$ then its distances from the elements of $\Ls$ determine the unique $i\geq 3$ for which $T\in\ell_i$. For all $R\in\ell_i$, there is a unique point on $\ell$ collinear with $R$, and, since $i\neq 0$, these points are pairwise distinct. Thus $T$ is resolved by $S$. Suppose $T\not\sim U$. Note that there are $q^3$ such points. As for $i=0,1,2$, there is a unique point on $\ell_i$ collinear with $T$, so $T$ is the center of a unique triad $(x_0,x_i,x_2)$ with $x_i\in\ell_i$. Since there are $q^3$ such triads, each having as a center $U$ and thus, by Proposition \ref{triad}, another point not collinear with $U$, we see that any point $T\not\sim U$ is resolved by $S$, which finishes the proof. The calculation of the size is easy.

\end{proof}

\begin{corollary}

If $q$ is odd then the metric dimension of $W(q)$ is at most $8q-1$.

\end{corollary}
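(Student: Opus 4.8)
\emph{Proof proposal.} The plan is to realize a resolving set for $W(q)$ as the union of a semi-resolving set for the points and a semi-resolving set for the lines, chosen so that they overlap as much as possible. Indeed, a set of vertices of the incidence graph of $W(q)$ is a resolving set precisely when it resolves all points and all lines; hence the union of the $4q$-element point semi-resolving set $\mathcal{A}$ of Proposition \ref{srsWqpt} and the $(5q-4)$-element line semi-resolving set $\mathcal{B}$ of Proposition \ref{srsWqln} is automatically a resolving set, and the only task is to bound its size below $9q-4$.

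Recall from Proposition \ref{srsWqln} that $\mathcal{B}$ consists of $4q-1$ lines of $W(q)$ together with exactly $q-3$ points, and that these $q-3$ points all lie on one common line $m$ of $W(q)$ (in the notation of that proof, passing to the dual $Q(4,q)$, $m$ is the line of $W(q)$ dual to the point $U$, and the $q-3$ points are dual to the lines $\ell_4,\dots,\ell_q$ through $U$). Since $U$ may be chosen arbitrarily, $m$ can be prescribed to be any line of $W(q)$. Now carry out the construction of Proposition \ref{srsWqpt} with the choice $a_1=m$: this is allowed, since $a_1,a_2,a_3$ are only required to be pairwise skew lines of $W(q)$ and $a_4$ is only required to be disjoint from the hyperbolic quadric $\mathcal{H}$ spanned by $a_1,a_2,a_3$. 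The resulting set $\mathcal{A}$ contains all $q$ points of $a_1=m$ except one deleted point; choose that deleted point to be one of the (at least four) points of $m$ that do not belong to $\mathcal{B}$. Then all $q-3$ points of $\mathcal{B}$ lie in $\mathcal{A}$.

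It remains to count $|\mathcal{A}\cup\mathcal{B}|$. The $4q-1$ lines of $\mathcal{B}$ are not in $\mathcal{A}$, which contains only points. The $q-3$ points of $\mathcal{B}$ all lie on $m=a_1$, while the points of $\mathcal{A}$ on $a_2,a_3,a_4$ cannot lie on $a_1$ (these lines are skew to $a_1$, and $a_4\cap a_1=\emptyset$ since $a_1\subset\mathcal{H}$ but $a_4\cap\mathcal{H}=\emptyset$); hence $\mathcal{A}\cap\mathcal{B}$ is exactly this set of $q-3$ points. Therefore
\[
|\mathcal{A}\cup\mathcal{B}|=|\mathcal{A}|+|\mathcal{B}|-|\mathcal{A}\cap\mathcal{B}|=4q+(5q-4)-(q-3)=8q-1 .
\]
Since $\mathcal{A}$ resolves all points of $W(q)$ and $\mathcal{B}$ resolves all lines, $\mathcal{A}\cup\mathcal{B}$ is a resolving set of size $8q-1$, giving $\mu(W(q))\le 8q-1$.

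There is no real obstacle here beyond this bookkeeping; the only point that needs care is that the ``free'' line $a_1$ in the point-construction of Proposition \ref{srsWqpt} can be aligned with the line $m$ carrying the $q-3$ inner points of the line-construction of Proposition \ref{srsWqln}, and that the point deleted from $m$ in forming $\mathcal{A}$ can be taken outside $\mathcal{B}$ — which is possible because $[m]$ has $q+1$ points while $\mathcal{B}$ uses only $q-3$ of them.
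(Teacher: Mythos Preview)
Your proposal is correct and follows essentially the same approach as the paper: you take the union of the two semi-resolving sets from Propositions~\ref{srsWqpt} and~\ref{srsWqln}, arranged so that the $q-3$ collinear points of the line semi-resolving set are contained in the point semi-resolving set, and compute the size as $8q-1$. The paper's proof is a two-sentence version of exactly this; your write-up simply spells out the alignment ($a_1=m$, deleted point chosen outside $\mathcal{B}$) in more detail.
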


\begin{proof}

Take a semi-resolving set for the lines as in Proposition \ref{srsWqln}, and also one for the points as in Proposition \ref{srsWqpt} in such a way that $q-3$ collinear points of the former one is contained in the latter. Then the union of these two sets is a resolving set of size at most $8q-1$.

\end{proof}

We summarize the results of this section in the following theorem.

\begin{theorem}

The metric dimension of $W(q)$ satisfies the inequalities

\[\mathrm{max}\{6q-27,4q-7\} \leq \mu(W(q))\leq 8q.\]

\end{theorem}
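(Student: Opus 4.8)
The plan is to simply assemble the pieces that have already been proved in this section. The lower bound $\max\{6q-27,4q-7\}\le \mu(W(q))$ is exactly Proposition~\ref{GQlb}, applied with the quadrangle $\GQ(q,q)$ taken to be $W(q)$; nothing further is needed there. For the upper bound $\mu(W(q))\le 8q$, I would split into the two parity cases, since the constructions of semi-resolving sets behave differently depending on whether $q$ is even or odd.

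If $q$ is even, then $W(q)$ is self-dual, so the dual of the semi-resolving set of size $4q$ for the points (Proposition~\ref{srsWqpt}) is a semi-resolving set of size $4q$ for the lines. Their union resolves every outer point (by the point semi-resolving set) and every outer line (by the line semi-resolving set), hence is a resolving set of size $8q$; this is precisely the Corollary following Proposition~\ref{srsWqpt}. If $q$ is odd, I would instead invoke Proposition~\ref{srsWqln}: there is a semi-resolving set for the lines of $W(q)$ of size $5q-4$ consisting of $q-3$ collinear points together with $2q-1$ lines. The key observation is that those $q-3$ collinear points can be chosen to lie on one of the lines $a_i$ used in the construction of Proposition~\ref{srsWqpt}; deleting one point from each of the four lines $a_1,a_2,a_3,a_4$ there still leaves a semi-resolving set for the points of size $4q$, and one of the $a_i$ carries $q$ points, of which we need only $q-3$ to coincide with the prescribed collinear points. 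Thus the two semi-resolving sets overlap in at least $q-3$ common points, so their union has size at most $(4q)+(5q-4)-(q-3)=8q-1\le 8q$, which is the Corollary following Proposition~\ref{srsWqln}. In both parities the union is a resolving set (inner vertices are trivially resolved, outer points by the point part, outer lines by the line part), establishing $\mu(W(q))\le 8q$.

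The proof is therefore essentially a bookkeeping exercise: state that the lower bound is Proposition~\ref{GQlb} and that the upper bound is the combination of Propositions~\ref{srsWqpt} and~\ref{srsWqln} via the two corollaries already recorded. The only point requiring a word of care is the $8q$ versus $8q-1$ discrepancy in the odd case; since $8q-1\le 8q$ this is harmless, and the clean bound $8q$ holds uniformly. I do not anticipate any genuine obstacle here, as all the substantive work — the lower bound double-counting and the geometric verification that the claimed sets are semi-resolving — has been carried out in the preceding propositions.

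\begin{proof}
The lower bound is Proposition~\ref{GQlb} applied to $\GQ(q,q)=W(q)$. For the upper bound, if $q$ is even then $W(q)$ is self-dual, so the union of the semi-resolving set for the points of size $4q$ from Proposition~\ref{srsWqpt} and its dual (a semi-resolving set for the lines of size $4q$) is a resolving set of size $8q$. If $q$ is odd, take the semi-resolving set for the lines of size $5q-4$ from Proposition~\ref{srsWqln}, whose $q-3$ collinear points may be assumed to lie among the $q$ points on one of the lines $a_i$ in the construction of Proposition~\ref{srsWqpt}; the union with that point semi-resolving set (of size $4q$) is then a resolving set of size at most $8q-1\le 8q$. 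In either case the resulting set resolves all vertices of $W(q)$, so $\mu(W(q))\le 8q$.
\end{proof}
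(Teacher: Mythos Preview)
Your proof is correct and mirrors the paper's own treatment: the theorem is presented there as a summary of Proposition~\ref{GQlb} and the two corollaries following Propositions~\ref{srsWqpt} and~\ref{srsWqln}, with no further argument. (One harmless slip in your plan: the line semi-resolving set from Proposition~\ref{srsWqln} contains $4q-1$ lines, not $2q-1$; your formal proof does not use this number and the size computation $(5q-4)+4q-(q-3)=8q-1$ is right.)
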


\section*{Acknowledgement}

The research was partially supported by the Italian MIUR (progetto 40\% ``Strutture Geometriche, Combinatoria e loro Applicazioni''), GNSAGA, the bilateral Slovenian-Hungarian Joint Research Project no.\ NN 114614 (in Hungary) and N1-0032 (in Slovenia), and the Bolyai Research Grant.

\small

\begin{flushleft}

Daniele Bartoli\\

University of Perugia, Italy\\

Department of Mathematics and Informatics\\

via Vanvitelli 1, 06123, Perugia\\

e-mail: {\sf daniele.bartoli@unipg.it}

\end{flushleft}

\begin{flushleft}

Tam\'{a}s H\'{e}ger, Marcella Tak\'ats \\

MTA--ELTE Geometric and Algebraic Combinatorics Research Group\\
ELTE E\"otv\"os Lor\'and University, Budapest, Hungary\\
Department of Computer Science\\
1117 Budapest, P\'azm\'any P.\ stny.\ 1/C, Hungary\\
e-mail: {\sf hetamas@cs.elte.hu}, {\sf takats@cs.elte.hu}

\end{flushleft}

\begin{flushleft}

Gy\"orgy Kiss\\

Department of Geometry and\\

MTA--ELTE Geometric and Algebraic Combinatorics Research Group \\

ELTE E\"{o}tv\"{o}s Lor\'{a}nd University, Budapest, Hungary\\

1117 Budapest, P\'{a}zm\'{a}ny P\'eter s\'et\'any 1/C, Hungary, and \\

FAMNIT, University of Primorska, \\

6000 Koper, Glagolja\v ska 8, Slovenia \\

e-mail: {\sf kissgy@cs.elte.hu}\\
\end{flushleft}

\end{document}